\tikzstyle{startstop} = [rectangle, rounded corners, minimum width=2.5cm, minimum height=0.5cm,text centered, text width=2cm, draw=black, fill=white!30]
\tikzstyle{startstop2} = [rectangle, rounded corners, minimum width=2.5cm, minimum height=0.5cm,text centered, text width=2cm, draw=black, fill=white!30]
\tikzstyle{startstop3} = [rectangle, rounded corners, minimum width=2.5cm, minimum height=0.5cm,text centered, text width=2.5cm, draw=black, fill=white!30]
\tikzstyle{arrow} = [thick,->,>=stealth]
\newtheorem{theorem}{Theorem}[section]
\newtheorem{lemma}[theorem]{Lemma}
\newtheorem{question}[theorem]{Question}
\newtheorem{corollary}[theorem]{Corollary}
\newtheorem{proposition}[theorem]{Proposition}
\newtheorem{definition}[theorem]{Definition}
\newtheorem{example}[theorem]{Example}
\newtheorem{remark}[theorem]{Remark}
\newtheorem{construction}[theorem]{Construction}
\numberwithin{equation}{section}
\numberwithin{figure}{section}
\newcommand{\ZZ}{\mathbb{Z}}
\newcommand{\RR}{\mathbb{R}}
\newcommand{\Z}{\mathbb{Z}}
\newcommand{\Q}{\mathbb{Q}}
\newcommand{\PP}{\mathbb{P}}
\newcommand{\CC}{\mathbb{C}}
\newcommand{\CP}{\mathbb{CP}}
\newcommand{\cM}{\mathcal{M}}
\newcommand{\cO}{\mathcal{O}}
\newcommand{\cE}{\mathcal{E}}
\newcommand{\cJ}{\mathcal{J}}
\newcommand{\cC}{\mathcal{C}}
\newcommand{\Stab}{\mathrm{Stab}}
\newcommand{\Ham}{\mathrm{Ham}}
\newcommand{\Symp}{\mathrm{Symp}}
\newcommand{\Diff}{\mathrm{Diff}}
\newcommand{\im}{\mathrm{im}}
\newcommand{\reg}{\mathrm{reg}}
\newcommand{\JJ}{\mathcal{J}}
\newcommand{\mJ}{\mathcal{J}}
\newcommand{\mC}{\mathcal{C}}
\newcommand{\mG}{\mathcal{G}}
\newcommand{\aA}{\mathbb{A}}
\newcommand{\DD}{\mathbb{D}}
\newcommand{\EE}{\mathbb{E}}
\newcommand{\RP}{\mathbb{RP}}
\newcommand{\id}{\mbox{id}}
\newcommand{\w}{\omega}
\newcommand{\ov}{\overline}
\newcommand{\wt}{\widetilde}
\begin{document}

\title
[$C^0$-rigidity of Hamiltonian diffeomorphism groups]
{$C^0$-rigidity of the Hamiltonian diffeomorphism group of symplectic rational surfaces}

\author{Marcelo Atallah, Cheuk Yu Mak and Weiwei Wu}

\maketitle

\begin{abstract} 

We investigate the $C^0$-topology of the group of symplectic diffeomorphisms of positive symplectic rational surfaces. For all but a few exceptions, we prove that the group of Hamiltonian diffeomorphisms forms a connected component in the $C^0$-topology. This provides the first nontrivial case in which the group of Hamiltonian diffeomorphisms is known to be $C^0$-closed inside the group of symplectic diffeomorphisms.

The key to our approach is to build a bridge between techniques from symplectic mapping class groups and problems in $C^0$-symplectic topology. Via a careful adaptation of tools from $J$-holomorphic foliation and inflation, we establish the necessary $C^0$-distance estimates.   We hope that this serves as an example of how these two subfields can interact fruitfully, and also propose several questions arising from this interplay.


\end{abstract}

\section{Introduction}
Let $(M,\omega)$ be a closed symplectic manifold and $d$ the distance induced by a fixed choice of Riemannian metric on $M$. The group $\Diff(M)$ of smooth automorphisms of $M$ is naturally equipped with the metric topology corresponding to the $C^0$-distance:\[d_{C^0}(\phi,\psi)=\max_{x\in M}d(\phi(x),\psi(x)),\] commonly referred to as the \textit{$C^0$-topology}. 
There is a natural nested inclusion of subgroups
\[
\Ham(M) \underset{(2)}{\subset} \Symp_0(M,\omega) \underset{(3)}{\subset} \Symp(M,\omega) \underset{(1)}{\subset} \Diff(M),
\]
with the induced subspace topology. The origin of $C^0$-symplectic topology can be traced back to the study of inclusion $(1)$, whereby Gromov-Eliashberg established their foundational theorem asserting that the group of symplectic diffeomorphisms $\Symp(M,\omega)$ is $C^0$-closed in $\Diff(M)$.
  This discovery revealed a striking $C^0$-rigidity phenomenon giving rise to the field that has since seen significant development; see \cite{RSV21, C-GHS, HLS16, HLS15, Sh22-2, Sh22-1, MO07, Op09, BO16, BHS18, BHS21, LMP1,Jannaud,SSmemoire} for a few examples. 
  
Inclusion $(2)$ marks a central open problem in $C^0$ symplectic topology, called the $C^0$-flux conjecture.  The conjecture predicts that the Hamiltonian diffeomorphism group $\Ham(M,\omega)$ is also $C^0$-closed in the identity component $\Symp_0(M,\omega)$ of $\Symp(M,\omega)$.  While the conjecture remains unresolved in full generality, substantial progress has been made in the works of Lalonde-McDuff-Polterovich \cite{LMP1}, Buhovsky \cite{LBflux}, including recent developments in the announced ongoing work of the first-named author and Shelukhin \cite{ME-WIP}; see also \cite{Ono_FluxConjecture} for the $C^1$-flux conjecture and \cite{ACLS} for discussion about the Lagrangian analogue of this question.  

Notably, discussions of the $C^0$-flux conjecture are often accompanied by a caveat pertaining to inclusion $(3)$, which is the main question we will investigate in this paper:

\begin{question}
\label{quest:C0-closure}
Is ${\Symp}_0(M,\omega)$ closed in $\Symp(M,\omega)$ in the $C^0$-topology?
\end{question}

Note that the $C^1$ counterpart of Question \ref{quest:C0-closure} is trivial, so it is a problem exclusive to $C^0$ topology.  This problem is closely related to the continuity of the flux homomorphism.  Recall  the  flux homomorphism $\textbf{Flux}(\{\psi_t\}):=\int_0^1[\iota(X_t)\w]dt\in H^1(M,\RR)$.  If \textbf{Flux} is $C^0$-continuous, some instances of which are proven in \cite{ME-WIP}, and assuming Question \ref{quest:C0-closure} has a negative answer for $M$, it implies \textbf{Flux} extends naturally to a strictly larger subgroup $\ov\Symp_0(M,\w)\subset \Symp_h(M,\w)$ which contains $\Symp_0(M,\w)$.  Note that the continuity of the flux homomorphism implies the $C^0$-flux conjecture.

Question \ref{quest:C0-closure} already 
appeared in \cite[Section 1]{LMP1} and \cite[Section 10.2.18]{MSIntro} when the $C^0$-flux conjecture was posed.  It is straightforward to see that the $C^0$-closure of $\Symp_0(M,\omega)$ in $\Symp(M,\omega)$ is contained in the \textit{symplectic Torelli group} $\Symp_h(M,\omega)$, which consists of symplectic diffeomorphisms that act trivially on homology. Indeed, this follows from the local-path-connectedness of $\mathrm{Homeo}(M)$ in the $C^0$-topology \cite{EKHomeo}. Therefore, an affirmative answer to Question \ref{quest:C0-closure} follows whenever the \textbf{symplectic mapping class group} $\pi_0(\Symp_h(M,\omega))$ is trivial. This is known to hold for a few cases such as: $\CP^2$ with the Fubini-Study form, the $S^2\times S^2$ and its $k$-point blow-ups, $1\leq k\leq3$.  More generally, any \textit{positive symplectic rational surface} with $\omega$ of type $\aA$ (as in Definition \ref{d:typeD}) has trivial symplectic mapping class groups. There is a very long list of works in this direction, see \cite{Gromov85,LP04,Sei08,Ev11,AP2013, LLWsmall, LLW22,BL2023}, as a small portion of closely related works. Moreover, when $M$ is a closed orientable surface, a positive answer to Question \ref{quest:C0-closure} follows from the fact that the inclusion of the group of area-preserving diffeomorphisms into the group of orientation-preserving homeomorphisms induces an isomorphism between the symplectic mapping class group and the standard mapping class group. To our knowledge, no results have been obtained for other symplectic manifolds in the literature, especially those with non-trivial symplectic mapping class group $\pi_0(\Symp_h(M,\omega))$. 

Recall that a \textit{positive symplectic rational surface} $(X,\omega)$ is either a symplectic $S^2\times S^2$ or $\CP^2\#n\overline\CP^2$, for $n\geq 0$, satisfying $c_1\cdot[\omega]>0$. In \cite{LLW22}, the symplectic classes of positive rational symplecitc surfaces are classified into types $\aA$, $\DD$ and $\EE$ (see Definition \ref{d:typeD}).
 Furthermore, it is shown that when $\omega$ is of type $\DD$, then $\pi_0(\Symp_h(X,\omega))$ is the quotient of the pure braid group of $k$ strands\footnote{The integer $k$ depends on the subtype $\DD_{k}$ of the type $\DD$ symplectic classes.} on the sphere by $\ZZ/2\mathbb{Z}$, so in particular, Question \ref{quest:C0-closure} has non-trivial content.
 The main result of this paper is the following.
\begin{theorem}
\label{thm:main}
Let $(X,\omega)$ be a positive symplectic rational surface of type $\DD$. Then, the group of Hamiltonian diffeomorphisms $\Ham(X,\omega)$ is a connected component of $\Symp(X,\omega)$ in the $C^0$-topology; in particular, it is closed.    
\end{theorem}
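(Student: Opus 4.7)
Since $X$ is a simply connected rational surface, $H^1(X,\RR)=0$, so the flux homomorphism is trivial and $\Ham(X,\omega)=\Symp_0(X,\omega)$. Hence Theorem \ref{thm:main} reduces to showing that $\Symp_0(X,\omega)$ is $C^0$-closed in $\Symp(X,\omega)$. If $\phi_n\in\Symp_0(X,\omega)$ converges in $C^0$ to some $\psi\in\Symp(X,\omega)$, then local path-connectedness of $\mathrm{Homeo}(X)$ \cite{EKHomeo} places $\psi$ in $\Symp_h(X,\omega)$. Using the description of $\pi_0(\Symp_h(X,\omega))$ from \cite{LLW22} as a quotient of the pure braid group of $k$ strands on the sphere by $\ZZ/2\ZZ$, the task becomes: for each non-trivial class $[\psi]$, to exhibit a constant $c([\psi])>0$ with $d_{C^0}(\phi,\psi)\geq c([\psi])$ for every $\phi\in\Symp_0(X,\omega)$.

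\textbf{Encoding the braid via a $J$-holomorphic foliation.} For a type-$\DD$ class, the plan is to choose an $\omega$-tame $J$ for which $X$ is foliated by embedded $J$-holomorphic spheres in a distinguished class $F$, realising $X$ as a $\CP^1$-fibration over $\CP^1$. The pure braid carried by $[\psi]$ is recorded by the motion of $k$ canonically marked points on a generic fiber --- for instance the intersection points of the fiber with a chosen set of $k$ exceptional symplectic spheres of equal area, which play the role of the $k$ strands. A non-trivial mapping class braids these points non-trivially, while any $\phi\in\Symp_0$ induces the trivial braid.

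\textbf{Quantitative $C^0$-estimates from inflation.} To establish the lower bound $c([\psi])$, I would push forward the foliation by each $\phi_n$ and track the positions of the $k$ marked points as $n\to\infty$; Gromov compactness together with positivity of intersection controls these positions for each fixed smooth $\phi_n$. To extract a $C^0$-distance bound that survives the continuous limit, the idea is to inflate $\omega$ along the foliating spheres and the $k$ exceptional spheres (in the sense of McDuff) to produce a family of symplectic forms whose cohomology class encodes the marked-point configuration. A Moser-type stability then converts a non-trivial braid into an unavoidable symplectic-area discrepancy, which in turn lower-bounds the $C^0$-displacement of at least one marked point by a constant depending only on the braid class.

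\textbf{Main obstacle.} The principal difficulty is genuinely $C^0$ in nature: $J$-holomorphicity is a smooth condition, so neither $\psi_*J$ nor $\psi(C)$ for a $J$-curve $C$ is well-defined when $\psi$ is only a continuous limit, and the standard smooth foliation/inflation technology does not directly descend. The technical heart of the paper will therefore be a careful $C^0$-adapted version of inflation that extracts the quantitative area discrepancy from the braid-theoretic obstruction alone, independently of any smoothness assumption on the approximating family $\phi_n$ --- this is precisely the ``bridge'' between symplectic mapping class group techniques and $C^0$-symplectic topology advertised in the abstract.
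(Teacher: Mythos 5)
Your reduction (simply-connectedness gives $\Ham=\Symp_0$; local path-connectedness of $\mathrm{Homeo}(X)$ forces the $C^0$-limit into $\Symp_h$; the target is then the pure-braid description of $\pi_0(\Symp_h)$) and your choice of tool (a $J$-holomorphic ruling and the braiding of the $k$ marked points on a fiber section) match the paper's setup. The divergence --- and the genuine gap --- is in your ``Quantitative $C^0$-estimates from inflation'' step. You propose that inflation along the foliating spheres should encode the marked-point configuration in a cohomology class and that a non-trivial braid should then produce an ``unavoidable symplectic-area discrepancy.'' This mechanism does not work: the braid datum is a homotopy class, invariant under deformation of the symplectic form; inflation changes areas by a prescribed amount depending only on homology classes, not on the marked-point configuration or the braid; and a symplectomorphism (small or not) preserves all areas exactly. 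There is no area defect to extract. In the paper, inflation plays a completely different and earlier role --- in Section 3 it is used to extend the group-theoretic criterion ``$\mathrm{Stab}^0(\Sigma)\subset\Ham$'' (Theorem \ref{t:stab0ham}) from the special range of $\lambda$ covered by Proposition \ref{p:stab0ham} to all type-$\DD$ classes, using the bihopfian property of $PB_n(S^2)/(\ZZ/2)$. The actual $C^0$-estimate (Theorem \ref{t:trivial_braid}) comes instead from the observation announced in the roadmap: although $f_*J$ is not $C^0$-close to $J$, the induced \emph{fibration structures} of $J$ and $f_*J$ are $C^0$-close. One then builds a two-step $J$-isotopy (Steps A and B in Section \ref{sec:mainproof1}) whose ambient Hamiltonian extension has support constrained to controlled $\chi_{J_0}$-preimages $\widetilde U^f\subset\widetilde W$, and one verifies triviality of the braid by projecting paths through $\pi_{J_0}$ onto $D_1$ and checking that the projected paths stay in small disk-neighborhoods of the marked points. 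Your proposal has no substitute for this fibration-projection bookkeeping.

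A secondary gap: you reduce to showing, for each nontrivial class $[\psi]$, a class-dependent lower bound $c([\psi])>0$. That gives closedness of $\Symp_0$ but not that $\Symp_0$ is a connected component; for the latter you also need $\Symp_0$ to be $C^0$-open, which requires a single $\epsilon>0$ independent of the class (equivalently $\inf_{[\psi]\neq 1} c([\psi])>0$). The paper obtains exactly such a uniform $\epsilon$ in Theorem \ref{t:trivial_braid} (depending only on the fixed divisor $D$ and $J_0$), which makes $\Ham$ clopen and therefore a connected component. Finally, your ``Main obstacle'' paragraph worries about $\psi_*J$ not being defined for a non-smooth limit $\psi$, but this never arises: the limit is assumed to lie in $\Symp(X,\omega)$, which is smooth, and one passes to $f=\phi_n\circ\psi^{-1}$ which is a smooth, $C^0$-small symplectomorphism --- precisely the setting of Theorem \ref{t:trivial_braid}.
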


This provides a positive answer to Question \ref{quest:C0-closure}, as $(X,\omega)$ is simply-connected, implying that $\Symp_0(X,\omega)$ coincides with $\Ham(X,\omega)$. 
We remark that most symplectic forms on a positive symplectic rational surface are of type $\aA$ or $\DD$; see Remark \ref{r:EE}.

\subsection*{Relations to other works}

Our proof borrows ideas from the study of the topology of symplectomorphism groups, which is initiated by Gromov \cite{Gromov85}.  This direction of research has seen significant progress by many different authors, and here we only give a highly incomplete list of the subsequent studies relevant to the current paper \cite{Abr98,AM00,Anj02,LP04,Pin08,Buse11,Hind12}.

The key observation of our proof is to use a divisorial decomposition of the ambient symplectic manifold $M$, so that we could acquire a $C^0$ control of isotopies of certain symplectic divisors (\ref{ssec:StepAB}, Step A).  In light of this, we propose Question \ref{q:C0smallIsotopy} in section \ref{sec:C0isotopy}, which might be of independent interest.  In other words, we ask whether the space of symplectic embedding of a divisor is locally path-connected.

This in turn is related to another long-standing open problem concerning the local path-connectedness of the Hamiltonian group \cite{Ba78}.  To date, the group of compactly supported Hamiltonian diffeomorphisms $\Ham_c(M,\omega)$ is known to be locally path-connected in the $C^0$ topology if $(M,\omega)$ is a closed surface or the standard symplectic $2n$-Ball; see \cite{Fathi, Sey13}. To our knowledge, local path-connectedness remains wide open outside these cases.  Provided the answer to Question \ref{q:C0smallIsotopy} is positive, we should be able to use the divisorial decomposition of $M$ to deduce a locally path-connectedness result of $\Symp(M)$.  This will be the topic of study in the sequel.  Further connections of this problem to Floer theory is discussed in Section \ref{sec:Floer}.


\subsection*{Roadmap of the proof}
We prove Theorem \ref{thm:main} by showing that there exists $\epsilon>0$ such that if $f\in\Symp(X,\omega)$ satisfies $d_{C^0}(f,\id)<\epsilon$ then $f$ is a Hamiltonian diffeomorphism.

Historically, the symplectic mapping class group of rational surfaces has been studied by understanding the action of $\Symp(X,\omega)$ on the space of a certain type of symplectic divisor $\Sigma\subset X$ for which the group $\Symp_c(X\setminus\Sigma,\omega)$ of compactly supported symplectomorphisms of the complement $X\setminus\Sigma$ is weakly-contractible; see \cite{Gromov85} and \cite{Ev11}. We note that weakly-contractibility of $\Symp_c(X\setminus\Sigma,\omega)$ is a strong condition. More recently, techniques have been developed to partially drop this assumption. We now have a good understanding in the setting of positive rational surfaces; see \cite{LLW22} and \cite{LLWsmall}.

From these works, one can deduce a criterion, Theorem \ref{t:stab0ham}, to establish whether a symplectic diffeomorphism is Hamiltonian for rational surfaces of interest. It is shown that, for certain $\omega$ (as in Proposition \ref{p:stab0ham}), if $f$ fixes $\Sigma$ point-wise, then it must be Hamiltonian. In Section \ref{sec:Stab0} we generalize this criterion to all symplectic forms of pure type $\DD$, without knowing $\Symp_c(X\setminus\Sigma,\omega)$, using a family inflation technique as in \cite{McDuffErratum, LU06, Buse11, MO15} in addition to the bihopfian property of the mapping class group of $S^2$ with finitely many marked points.

In order to utilize the criterion, the challenge lies in constructing a Hamiltonian diffeomorphism $\phi$ such that: $(1)$ $\phi\circ f(\Sigma)=\Sigma$ and $(2)$ $\phi\circ f|_{\Sigma}$ is in the trivial mapping class group of $\Sigma$. We note it is possible to achieve $(1)$ by means of a standard $J$-isotopy argument. However, such an isotopy lacks the $C^0$ control to attain $(2)$ because $f_*J$ is not $C^0$ close to $J$ in the space of almost complex structures. To overcome this difficulty, we observe that, despite $f_*J$ not being controlled, the induced fibration structures of $J$ and $f_*J$ are indeed $C^0$ close.  This allows us to carefully choose a $J$-isotopy which has enough $C^0$ control over a prescribed region compatible with (but not containing) $\Sigma$. Analyzing the induced ambient isotopy through the projection induced by the fibration allows us to achieve $(2)$.    

We remark that there is a possibility of using Floer-theory as an alternative criterion. In Section \ref{sec:Floer}, we explain how to combine results in the literature to get the analogue of Theorem \ref{thm:main} for $A_n$-Milnor fibres; cf. \cite{Jannaud} for related results. Further discussion is included in Section \ref{sec:further}.

    
\subsection*{Acknowledgments}
We thank Richard Hind, Vincent Humili\`ere, Tian-Jun Li, Jun Li, Sobhan Seyfaddini, and Egor Shelukhin for their interest and encouragement in this project.

M.A and C.Y.M are partially supported by the Royal Society University Research Fellowship. W.W. is supported by NSFC Grant 12471063.

\section{Symplectic Mapping class groups of positive rational surfaces}

In this section, we recall the main definitions and results of \cite{LLW22}, especially for pure type $\DD$ forms.

\subsection{Preliminaries}
Consider a rational symplectic surface $X_n\cong (\CP^2\#n\ov\CP^2,\w)$ which is a symplectic $4$-manifold obtained by a sequence of symplectic blow-ups from a standard $\CP^2$.  Let $\cE_\w$ denote the set of $\w$-exceptional classes, that is, those classes with self-intersection $(-1)$ which are represented by embedded $\w$-symplectic spheres.  A set of basis elements of its second homology classes is given by a line class $H$ and $n$ orthogonal exceptional classes $E_1,\cdots, E_n\in \cE_\w$. This basis is called a \textbf{canonical basis} if $$\w(E_i)=\min\{\w(E)\,\,|\,E\in \cE_\w,\,\,E\cdot E_k=0, k\ge i+1\}.$$ 

In particular, $E_n$ is an exceptional class with the minimal $\w$-area.  The existence of a canonical basis is guaranteed by Gromov compactness; see \cite{LW12}.  By abuse of notation, we will not distinguish between a cohomology class of degree $2$ and its Poincaré dual below.  Therefore, by fixing a canonical basis once and for all, the cohomology class $[\w]$ of $X_n$, up to rescaling, takes the form of 

\[
  [\w]=H-\sum_{i=1}^n a_iE_i.
\]

We denote this class by $[\w]=(1|a_1,\cdots,a_n)$ for $a_i\ge a_{i+1}$ and the anti-canonical class $c_1(\w)=(3|1,\cdots,1)$. In this paper, we also require the following positivity condition:

\begin{equation}\label{e:positivity}
    c_1\cdot[\w]>0.
\end{equation}   

A symplectic classes $[\w]$ that satisfies \eqref{e:positivity} is called a \textit{positive} symplectic class.  This condition is equivalent to the condition that $(X,\w)$ admits a smooth symplectic divisor $D$, such that $(X,D)$ forms a log Calabi-Yau pair; see \cite[Proposition 2.2]{LLW22}. 
Following \cite{LLW22} (see the discussion between Definition 2.13 and Lemma 2.14 of \cite{LLW22}), we use the following definition.



\begin{definition}\label{d:typeD}
    We say a symplectic form $\w$ is of \textbf{type} $\mathbb{D}$, if under a canonical basis,
    \begin{align}\label{e:typeD}
         [\w]&=(1|\lambda,\underbrace{\frac{1-\lambda}{2},\cdots,\frac{1-\lambda}{2}}_{k},a_{k+2},\cdots,a_n),\quad a_{k+2}<\frac{1-\lambda}{2},\quad \text{ and } \\
         & \text{ either } \quad \lambda\in({1}/{3}, 1), k \ge 4, \quad  \text{ or }\quad \lambda={1}/{3}, k=4. \nonumber
    \end{align}
When $\lambda\in({1}/{3}, 1)$, we call it type $\DD_k$ and when $\lambda={1}/{3}$, we call it type $\DD_5$. If $n=k+1$, we say $[\w]$ is of \textbf{pure type} $\mathbb{D}$. A form is of type $\mathbb{E}$ if $\lambda={1}/{3}$ and $k=5,6,7$, and type $\mathbb{A}$ if it is neither type $\mathbb{D}$ nor $\mathbb{E}$. 
\end{definition}

\begin{example}
The manifold $\CP^2\#n\ov\CP^2$ with a monotone symplectic form $\w$ satisfies (after normalization) 
\[
[\w]=(1|\underbrace{\frac{1}{3},\cdots,\frac{1}{3}}_{n}).
\]
Therefore, it is of pure type $\aA$ if $n \le 4$, pure type $\DD$ if $n=5$ and pure type $\EE$ if $n=6,7,8$\footnote{Note that, when $\lambda=\frac{1}{3}$, $\frac{1-\lambda}{2}=\frac{1}{3}$ as well so there are $k+1$ many $\frac{1}{3}$ in total in \eqref{e:typeD}}. 
There is no monotone symplectic form when $n>8$. Note that the non-monotone symplectic form $$[\w]=(1|\lambda,\underbrace{\frac{1-\lambda}{2},\cdots,\frac{1-\lambda}{2}}_{4})$$ on $\CP^2\#5\ov\CP^2$ is also of pure type $\DD$. Note that it is of type $\DD_4$ as opposed to $\DD_5$ as in the monotone case.
\end{example}

\begin{remark}\label{r:EE}
    Most of the symplectic forms in $\CP^2\#n\ov\CP^2$ are of type $\aA$, and most of the remaining forms are of type $\DD$.
    Type $\EE$ symplectic forms only arise after monotone $6,7$ or $8$ point blow-ups and then (possibly) perform further small blow-ups. 
\end{remark}

\begin{remark}
    The \textit{type} of a symplectic form is originally defined by an associated Dynkin diagram of the Lagrangian spherical classes.  See \cite{LLW22} for more details.  More precisely, given any symplectic form $\w$, under a canonical basis, we consider the root system generated by classes $$\{H-E_1-E_2-E_3, E_i-E_{i+1}\}_{i=1}^{n-1}.$$  This root system is not simple, but the sub-root system (called a \textbf{Lagrangian root system}) generated by classes with the additional requirement $\w(L)=0$ is always a direct sum of root systems of type $\mathbb{A}, \mathbb{D}$ or $\mathbb{E}$.  We say that a symplectic form is of \textbf{type $\mathbb{D}$ or $\mathbb{E}$} if such a root system contains a direct summand of type $\mathbb{D}$ or $\mathbb{E}$, respectively; and we call it \textbf{type $\mathbb{A}$} if all direct summands are of type $\mathbb{A}$. Root systems obtained this way cannot have one summand of type $\mathbb{D}$ and another summand of type $\mathbb{E}$, so the type is well-defined.




\end{remark}

$J$-holomorphic curves on rational surfaces have some significant properties. Denoting the space of $\w$-compatible almost complex structures by $\cJ_\w$, the following two facts will be useful for our upcoming proofs.

\begin{lemma}[{\cite[Lemma 3.9]{LLW22}; \cite[Proposition 6]{LZ15}; \cite[Proposition 3.5]{Zha17}}]\label{l:nefemb}


    Let $X$ be a rational surface with $\chi(X)>4$ and symplectic class $[\w]=(1|a_1,\cdots,a_n)$ under a canonical basis.\footnote{Also termed a reduced symplectic form in the terminology of \cite{LLW22}.} Then, $H-E_1$ admits an embedded $J$-holomorphic representative for any $J\in\cJ_\w$.  In particular, one has a $J$-holomorphic foliation by curves in the class $H-E_1$.
\end{lemma}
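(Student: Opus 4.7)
The plan is to decompose the argument into three steps: establishing existence of an irreducible $J$-holomorphic representative of the class $A := H - E_1$ for every $J \in \cJ_\w$, proving that such a representative is embedded, and deducing the foliation property. The numerical invariants $A \cdot A = 0$ and $c_1 \cdot A = 2$ give $A$ virtual genus $0$, so the moduli space of unparametrized $J$-holomorphic spheres in class $A$ has expected real dimension $2$, matching a two-parameter foliation of the four-manifold $X$.

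For existence, I would first handle a generic $J_0 \in \cJ_\w$ via Taubes' theorem identifying Gromov and Seiberg--Witten invariants: the class $A$ admits a smooth embedded symplectic representative (for instance the proper transform of a complex line in $\CP^2$ through the point blown up to introduce $E_1$), so its Gromov invariant is nonzero and the moduli space is nonempty. For arbitrary $J \in \cJ_\w$, Gromov compactness produces a (possibly reducible) stable $J$-holomorphic map, and the crux is to prevent bubbling. I would do this by establishing that $A$ is \emph{$J$-nef}, meaning $A \cdot [C] \ge 0$ for every irreducible $J$-holomorphic curve $C \subset X$. Given nefness, any stable-map decomposition $A = \sum m_i [C_i]$ with $m_i > 0$ would force each $A \cdot [C_i] = 0$, since $A^2 = 0$ and all terms are non-negative; hence every $[C_i]$ is numerically parallel to $A$, and a short analysis of $\w$-areas and intersection numbers then rules out nontrivial decompositions.

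The nefness assertion is the main obstacle. I would prove it by first verifying $A \cdot E \ge 0$ for all exceptional classes $E \in \cE_\w$, and then extending to arbitrary irreducible $J$-holomorphic classes. Writing $E = dH - \sum m_i E_i$ in the canonical basis, one computes $A \cdot E = d - m_1$, so the task reduces to showing $d \ge m_1$. The case $d = 0$ forces $E = E_j$ for some $j$ and is immediate. For $d \ge 1$, the inequality is a standard structural property of exceptional classes in a canonical (reduced) basis, following from the defining relations $d^2 - \sum m_i^2 = -1$ and $3d - \sum m_i = 1$ together with the minimality conditions imposed by the canonical basis; the hypothesis $\chi(X) > 4$, i.e.\ $n \ge 2$, enters here in ensuring that the canonical basis contains enough elements for the required comparison with classes like $H - E_1 - E_j$. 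Extending to arbitrary irreducible $J$-holomorphic classes then uses the standard structure theorem for curve classes on rational surfaces. Embeddedness of the resulting representative follows from adjunction: the inequality $2\delta(C) \le A^2 - c_1 \cdot A + 2 = 0$ forces $\delta(C) = 0$. Finally, two distinct embedded representatives are automatically disjoint by positivity of intersection (as $A^2 = 0$), so the two-parameter moduli sweeps out a closed subset of real dimension $4 = \dim X$, which must coincide with $X$; this gives the desired $J$-holomorphic foliation.
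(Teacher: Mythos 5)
The paper itself does not prove this lemma; it is cited from \cite{LLW22}, \cite{LZ15}, and \cite{Zha17}, and your overall framework — computing $A^2=0$, $c_1\cdot A=2$, genus $0$, a two-dimensional moduli space, reducing everything to the $J$-nefness of $A=H-E_1$, and using adjunction plus positivity of intersection at the end — is indeed the framework those references use. So the strategy is the right one.

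There is, however, a genuine gap in the middle step. You write that nefness forces each component of a stable-map decomposition $A=\sum m_i[C_i]$ to satisfy $A\cdot [C_i]=0$ (correct), and \emph{hence} each $[C_i]$ is numerically parallel to $A$, so nontrivial decompositions are ruled out. That inference is false, and in fact the decompositions it is supposed to exclude are exactly the singular fibers that the rest of this paper relies on. Take $[C_1]=E_2$ and $[C_2]=H-E_1-E_2$: then $[C_1]+[C_2]=A$, $A\cdot[C_1]=A\cdot[C_2]=0$, but neither class is a multiple of $A$. Such nodal configurations $E_i\cup(H-E_1-E_i)$ really do occur for every $J$ — they are the curves $D_{i,J}$ used throughout Section~4 — so any proof that ``prevents bubbling'' proves too much. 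The statement to be proved is not that every $J$-holomorphic representative of $A$ is irreducible, but that an embedded irreducible one exists (alongside the finitely many nodal ones), and that together they sweep out $X$. The argument in the cited references instead shows that the compactified moduli space $\mathcal{M}(A,J)$ is nonempty (e.g.\ via the nonvanishing Gromov--Taubes invariant), that the reducible configurations are rigid because their components have negative self-intersection (hence form a finite subset), and that the evaluation map to $X$ is surjective, so that through a generic point the $A$-curve is irreducible and then embedded by adjunction. Your deduction ``$A\cdot[C_i]=0\Rightarrow [C_i]\parallel A$'' would require $[C_i]^2\ge 0$ together with the light-cone lemma, which fails here since the component classes have $[C_i]^2=-1$. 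Separately, the extension of the inequality $d\ge m_1$ from exceptional classes to arbitrary irreducible $J$-holomorphic curve classes — the actual content of $J$-nefness — is the technical core of \cite{LZ15} and \cite{Zha17} and is only gestured at; that is acceptable in a sketch, but it should be flagged as the place where the reduced-form hypotheses do real work.
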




\begin{lemma}[\cite{Pin08}, Lemma 1.2]\label{l:minimal}
    Let $X$ be a symplectic rational surface, and let $E$ be an exceptional class with the smallest area among all exceptional classes.  Then $E$ admits an embedded $J$-holomorphic representative for any almost complex structure $J\in \cJ_\w$.
\end{lemma}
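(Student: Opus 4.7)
The plan is a classical Gromov-compactness argument combined with a careful analysis of cusp-curve decompositions and the minimality of $\omega(E)$.

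First, I would note that by adjunction $c_1 \cdot E = 1$, so the moduli space of unparametrized $J$-holomorphic spheres in class $E$ has virtual real dimension $2 c_1 \cdot E - 2 = 0$. For generic $J' \in \cJ_\w$, the Gromov (equivalently, Seiberg-Witten) invariant of an exceptional class on a rational surface is nonzero, so this moduli space consists of a single embedded $J'$-holomorphic sphere representing $E$. Given an arbitrary $J \in \cJ_\w$, I approximate by a sequence $J_n \to J$ of generic compatible almost complex structures and take embedded $J_n$-holomorphic spheres $u_n$ in class $E$. Gromov compactness yields a subsequential limit which is a $J$-holomorphic stable map whose total homology class is $E$, giving a decomposition
\[
E = \sum_{i=1}^k m_i B_i,
\]
where each $v_i$ is a simple $J$-holomorphic rational curve in class $B_i$ and $m_i \geq 1$.

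The goal is then to show the decomposition is trivial: $k=1$, $m_1=1$, $B_1=E$, with $v_1$ embedded. I would exploit three constraints: positivity of area, $\omega(B_i)>0$ with $\sum m_i \omega(B_i) = \omega(E)$; the adjunction inequality $c_1 \cdot B_i \geq B_i^2 + 2$ for simple rational $J$-curves, with equality iff $v_i$ is embedded; and the identity $\sum m_i (c_1 \cdot B_i) = c_1 \cdot E = 1$. Combining these, I would argue that at least one of the $B_j$ is a $(-1)$-class, hence $\omega$-exceptional. By the hypothesis that $\omega(E)$ is minimal among exceptional classes, $\omega(B_j) \geq \omega(E)$, while positivity gives $\omega(B_j) \leq \omega(E)$ with strict inequality whenever $k \geq 2$ or some $m_i \geq 2$. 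Hence the decomposition is trivial, and adjunction upgrades the resulting simple representative of $E$ to an embedded one.

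The main obstacle is the claim that some $B_j$ must be a $(-1)$-class. The subtlety is that some components could a priori satisfy $B_i^2 \leq -2$ and $c_1 \cdot B_i \leq 0$, so that the sum $\sum m_i (c_1 \cdot B_i) = 1$ is achieved through partial cancellation. To rule this out one uses that on a rational surface, spherical classes represented by simple $J$-holomorphic curves satisfy a uniform lower bound on $B_i^2$ depending only on the intersection form, together with the positivity $\omega \cdot B_i > 0$. A combinatorial analysis of the resulting arithmetic constraints on $(c_1 \cdot B_i, B_i^2, \omega(B_i))$, together with the fact that the residual classes $B_i$ with $c_1 \cdot B_i = 0$ and $B_i^2 = -2$ (i.e., Lagrangian-type $(-2)$-classes) cannot exhaust $E$, isolates the required exceptional summand and completes the proof.
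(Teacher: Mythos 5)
Your overall strategy---Gromov compactness producing a stable map decomposition $E = \sum m_i B_i$, then invoking area-minimality of $\omega(E)$ among exceptional classes to force triviality---is indeed the standard one and matches Pinsonnault's approach; note the paper itself simply cites \cite[Lemma 1.2]{Pin08} and does not reprove the statement. However, the proposal has a sign error and, more importantly, a genuine gap at its crux. The adjunction inequality is stated backwards: for a simple $J$-holomorphic rational curve one has $0 = 1 + \tfrac{1}{2}(B_i^2 - c_1\cdot B_i) - \delta$ with $\delta \geq 0$, so the correct inequality is $c_1 \cdot B_i \leq B_i^2 + 2$, with equality iff $v_i$ is embedded (not $\geq$).

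The substantive gap is the claim that ``at least one of the $B_j$ is a $(-1)$-class,'' which is precisely the content of the lemma and which your sketch does not establish. With the corrected adjunction, each component is of one of three types: (i) $B_i^2 \geq 0$; (ii) $B_i$ exceptional ($B_i^2=-1$, $c_1\cdot B_i = 1$, embedded); (iii) $B_i^2 \leq -1$ with $c_1\cdot B_i \leq 0$. Your argument does not exclude a decomposition with no type (ii) component, in which the identity $\sum m_i c_1\cdot B_i = 1$ is satisfied by type (i) contributions outweighing type (iii) ones---for instance $E = F + (E-F)$ with $F$ a fibre class, $(E-F)^2 = -3$, $c_1\cdot(E-F) = -1$. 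The claimed ``uniform lower bound on $B_i^2$ depending only on the intersection form'' is not a true statement: on $\CP^2\#n\overline{\CP^2}$ with $n$ large, simple $J$-holomorphic spheres of arbitrarily negative self-intersection exist for suitable $J$; what actually bounds the decomposition is the energy constraint $\omega(B_i) < \omega(E)$ combined with the $b^+ = 1$ light-cone geometry of $H^2(X)$ and positivity of intersections from the connected tree structure of the limit. Your remark that Lagrangian $(-2)$-classes ``cannot exhaust $E$'' is correct but trivial (just $c_1\cdot E = 1 \neq 0$) and ignores type (iii) components other than $(-2)$-classes. Closing this gap is where the real work of \cite[Lemma 1.2]{Pin08} lies, and the combinatorics alone will not carry it; the reference should be consulted for the actual argument.
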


As a consequence of Lemma \ref{l:minimal}, if $[\w]=(1|a_1,\cdots,a_n)$ is a symplectic class under a canonical basis, then $E_n$ has an embedded pseudo-holomorphic representative for all $J\in\cJ_\w$.


\subsection{Main SMCG results}

The main theorems of \cite{Ev11}, \cite{LLWsmall}, and \cite{LLW22} can be summarized as follows. Let $PB_k(S^2)$ be the pure braid group of $k$ strands on $S^2$, and $\Symp_h(X,\w)$ be the symplectic Torelli group (i.e. the subgroup of the symplectomorphism group consisting of elements which act  trivially on homology).

\begin{theorem}\label{theorem:SMCG}
   If $[\w]$ is of type $\DD_k$, then $$\pi_0(\Symp_h(X,\w))\cong PB_{k}(S^2)/\mathbb{Z}_2,$$ is generated by Dehn twists of Lagrangian spheres.
\end{theorem}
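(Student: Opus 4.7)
The plan is to follow the fibration/configuration-space strategy going back to Gromov \cite{Gromov85} and developed further by Evans \cite{Ev11} and Li--Li--Wu \cite{LLW22}. The first step is to reduce to the pure type $\DD$ case: the exceptional classes $E_{k+2},\dots,E_n$ have areas strictly smaller than $(1-\lambda)/2$, so iterating Lemma \ref{l:minimal} (applied in order of increasing area) produces a unique embedded $J$-holomorphic representative of each for every $J\in\cJ_\w$. Since Torelli elements preserve each homology class, these small exceptional spheres can be blown down equivariantly, and the computation of $\pi_0(\Symp_h)$ reduces to the case of $X_{k+1}\cong\CP^2\#(k+1)\ov{\CP^2}$ with $[\w]$ of pure type $\DD$.

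On $X_{k+1}$, Lemma \ref{l:nefemb} provides a $J$-holomorphic foliation in class $H-E_1$ for every $J\in\cJ_\w$, hence an $S^2$-bundle $\pi_J:X_{k+1}\to\CP^1$. Lemma \ref{l:minimal} applied to $E_2,\dots,E_{k+1}$ in order of area supplies disjoint $(-1)$-sections, while the $k$ Lagrangian spheres of the $\DD_k$-system (of classes $E_i-E_{i+1}$ and $H-E_1-E_2-E_3$) project under $\pi_J$ onto $k$ distinct marked points on the base. Sending $f\in\Symp_h(X_{k+1},\w)$ to the configuration of these base points with respect to $\pi_{f_*J}$ defines a continuous map into the configuration space of $k$ ordered points on $S^2$, which together with a Moser normal-form argument near a reference divisor $\Sigma$ fits into a homotopy fibration
\begin{equation*}
\Symp_c(X_{k+1}\setminus\Sigma,\w)\longrightarrow \Symp_h(X_{k+1},\w)\longrightarrow \cC,
\end{equation*}
with $\cC\simeq\Conf_k(S^2)$ and hence $\pi_1(\cC)=PB_k(S^2)$. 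Connectedness of the fiber $\Symp_c(X_{k+1}\setminus\Sigma,\w)$ is to be obtained by an inflation argument in the style of \cite{LP04, McDuffErratum, LU06, Buse11, MO15}: inflating along the $H-E_1$-foliation collapses the complement to a symplectic tubular neighborhood admitting a K\"ahler model, in which the fiber is visibly path-connected.

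Passing to the long exact sequence yields
\begin{equation*}
1\longrightarrow K\longrightarrow \pi_0(\Symp_h(X_{k+1},\w))\longrightarrow PB_k(S^2)\longrightarrow 1,
\end{equation*}
and the kernel $K\cong\ZZ/2$ is identified with the hyperelliptic involution of the $H-E_1$-fibration; equivalently, it corresponds to the nontrivial order-two central element of the sphere braid group acting trivially on the fibration data. For the generation statement, I would lift each standard half-twist generator of $PB_k(S^2)$ to the monodromy around a nodal degeneration of a generic path $\pi_{J_t}$; its vanishing cycle is precisely one of the $\DD_k$ Lagrangian spheres, so the lift is the Dehn twist along that sphere. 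The hardest step will be connectedness of $\Symp_c(X_{k+1}\setminus\Sigma,\w)$, uniformly across configurations, together with the verification that the Torelli condition (combined with the hyperelliptic $\ZZ/2$) is exactly what cuts the full sphere braid group down to $PB_k(S^2)/\ZZ/2$; this is where the full strength of the $J$-holomorphic technology of \cite{LLW22} is needed.
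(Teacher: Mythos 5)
Your overall strategy---reduce to the pure type $\DD$ case, exploit the $J$-holomorphic ruling in class $H-E_1$, and relate $\pi_0(\Symp_h)$ to a sphere braid group through a fibration---is the right family of ideas, and it is indeed the Gromov--Evans--Li--Li--Wu approach that the paper is summarizing (the paper itself does not prove this theorem; it cites \cite{Ev11}, \cite{LLWsmall}, \cite{LLW22}). However, the proposed fibration is set up incorrectly, and the error is not cosmetic: as written, the long exact sequence does not produce the claimed answer.

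You assert a homotopy fibration $\Symp_c(X\setminus\Sigma)\to\Symp_h\to\cC$ with $\cC\simeq\Conf_k(S^2)$. Two problems. First, the stabilizer of a configuration of base points in $S^2$ under $\Symp_h$ is not $\Symp_c(X\setminus\Sigma)$; it is much larger, since many symplectomorphisms fix the $k$ points on the base of the ruling while moving $\Sigma$ nontrivially (the gauge group of the normal bundle, reparametrizations of each component, etc.). Second, even granting such a fibration, the homotopy exact sequence reads
\[
\pi_1(\Symp_h)\to\pi_1(\cC)\to\pi_0(\Symp_c(X\setminus\Sigma))\to\pi_0(\Symp_h)\to\pi_0(\cC),
\]
so if $\Symp_c(X\setminus\Sigma)$ and $\cC$ are both connected, this forces $\pi_0(\Symp_h)=0$; there is no way to extract the short exact sequence $1\to K\to\pi_0(\Symp_h)\to PB_k(S^2)\to 1$ that you write down. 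The braid group sits in the wrong position of your sequence.

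In the actual argument, $\Symp_h$ acts on the space $\cC^0_\w$ of embedded filling divisors, which is weakly homotopy equivalent to $\cJ^{\reg}_\w$ and has trivial $\pi_0$ and $\pi_1$ by a codimension count---so this base contributes nothing, and $\pi_0(\Symp_h)\cong\pi_0(\Stab(\Sigma))$. The braid group arises one step further down the iterated fibration \eqref{summary}: it is $\pi_0(\Symp(\Sigma))$, i.e.\ the mapping class group of the $(2H-E_1-\cdots-E_5)$--component $D_1$ relative to its $n-1$ intersection points with the other components, which is $\pi_0(\Diff^+(S^2,n-1))\cong PB_{n-1}(S^2)/(\ZZ/2)$ via the fibration \eqref{eq:DiffSeq}. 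That is also where the central $\ZZ/2$ comes from: it is the image of $\pi_1(\Diff^+(S^2))\cong\pi_1(SO(3))$, not a hyperelliptic involution of the ruling. Finally, the weak contractibility of $\Symp_c(X\setminus\Sigma)$ in the relevant range is obtained from the Weinstein/convexity structure of the complement under the inequality \eqref{e:lambdaineq} (see \cite[Lemma 5.5, 5.6]{LLW22}), not by inflating along the ruling; inflation is used in this paper for a different purpose, namely transferring statements between symplectic classes with different $\lambda$. Your reduction to the pure type $\DD$ case by blowing down the small exceptional classes is in the right spirit, though one should be careful that iterating Lemma \ref{l:minimal} requires re-running the minimality argument after each blow-down rather than invoking it once on $X$.
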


The proof of this theorem, which we briefly recall below, is vital to the proof of our main result.  We refer interested readers to \cite{Ev11} for the monotone case of $n=5$, to \cite{LLWsmall} for the non-monotone case for $n=5$, and \cite{LLW22} for the case when $n\ge6$.   In the rest of this section, we consider exclusively pure type $\DD$ forms, and postpone non-pure forms to later sections.  


Consider a symplectic configuration inside $X$, which is a union of smooth symplectic divisors that is $J$-holomorphic for some $J\in\cJ_\w$.  

We consider configurations consisting of exceptional symplectic spheres in $X= \CC P^2  \# n\overline {\CC P^2}$ with transversal and positive pairwise intersections with homology classes shown in Figure \ref{Cn} when $n\ge6$, and Figure \ref{conf5} when $n=5$.

\begin{figure}[ht]
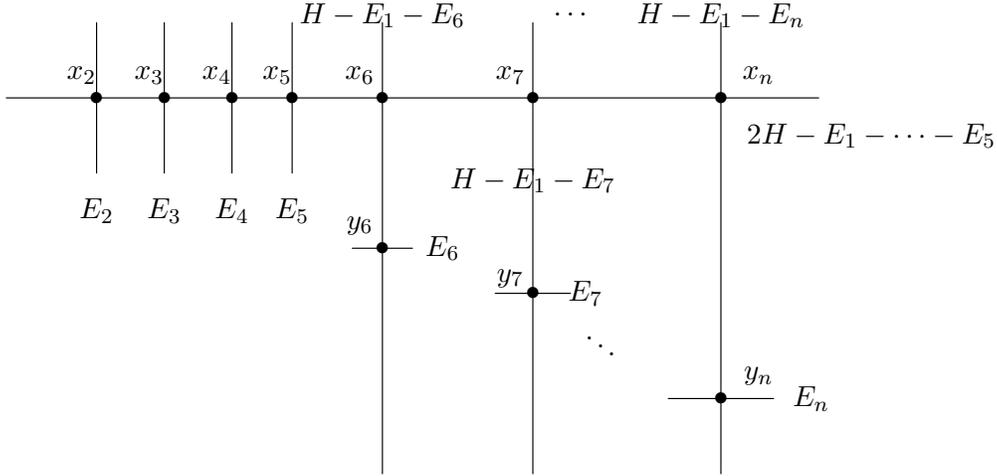

  \centering
\[
\xy
(0, -10)*{};(108, -10)* {}**\dir{-};
(115, -10)*{};
(115,-15)*{2H-E_1-\cdots-E_5};
(50, 0)*{}; (50, -60)*{}**\dir{-};
(50, 1)*{H-E_1-E_6};
(12, 0)*{}; (12, -20)*{}**\dir{-};
(10, -7)*{x_2};
(12, -10)*{\bullet};
(12, -25)*{E_2};
(21, 0)*{}; (21, -20)*{}**\dir{-};
(19, -7)*{x_3};
(21, -10)*{\bullet};
(21, -25)*{E_3};
(30, 0)*{}; (30, -20)*{}**\dir{-};
(28, -7)*{x_4};
(30, -10)*{\bullet};
(30, -25)*{E_4};
(38, 0)*{}; (38, -20)*{}**\dir[red, ultra thick, domain=0:6]{-};
(36, -7)*{x_5};
(38, -10)*{\bullet};
(38, -25)*{E_5};
(46, -30)*{}; (54, -30)*{}**\dir{-};
(47, -7)*{x_6};
(50, -10)*{\bullet};
(50, -30)*{\bullet};
(47,-27)*{y_6};
(58, -30)*{E_6};
(70, 0)*{}; (70, -60)*{}**\dir{-};
(70, -21)*{H-E_1-E_7};(75, 1)*{\cdots};
(95, 0)*{}; (95, -60)*{}**\dir{-};
(95, 1)*{H-E_1-E_n};
(65, -36)*{}; (75, -36)*{}**\dir{-};
(67, -7)*{x_7};
(70, -10)*{\bullet};
(70, -36)*{\bullet};
(95, -10)*{\bullet};
(95, -50)*{\bullet};
(67,-34)*{y_7};
(77, -36)*{E_7};
(88, -50)*{}; (102, -50)*{}**\dir{-};
(107, -50)*{E_n};
(100, -7)*{x_n};
(100, -47)*{y_n};
(79, -42)*{\ddots};
\endxy
\]
\caption{A filling divisor $\Sigma$ in $X$, $n\geq 6$}
  \label{Cn}
\end{figure}

\begin{figure}[ht]
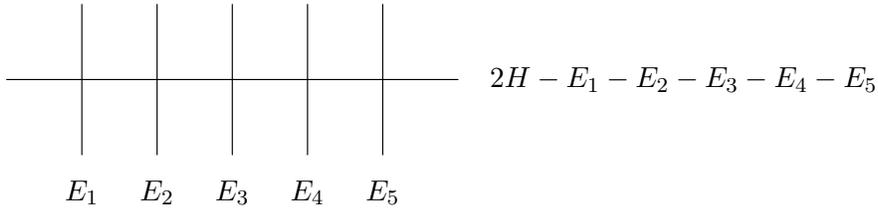

\begin{center}
\[
\xy
(0, -10)*{};(60, -10)* {}**\dir{-};
(90, -10)* {2H-E_1-E_2-E_3-E_4-E_5};
(10, 0)*{}; (10, -20)*{}**\dir{-};
(10, -25)*{E_1};
(20, 0)*{}; (20, -20)*{}**\dir{-};
(20, -25)*{E_2};
(30, 0)*{}; (30, -20)*{}**\dir{-};
(30, -25)*{E_3};
(40, 0)*{}; (40, -20)*{}**\dir{-};
(40, -25)*{E_4};
(50, 0)*{}; (50, -20)*{}**\dir[red, ultra thick, domain=0:6]{-};
(50, -25)*{E_5};
\endxy
\]
  \caption{A filling divisor for $n=5$ monotone case}\label{conf5}
\end{center}
\end{figure}

Such a symplectic configuration is called a \textbf{filling divisor}.  Under additional assumptions, we will see that its complement is a Weinstein manifold, whose compactly supported symplectomorphism group is weakly contractible.  The set of filling divisors is denoted by $\mC_\w$.



Take a subset $\mC^0_\w \subset \mC_\w$, which consists of configurations whose components have pairwise symplectically orthogonal intersections.  From Gompf's isotopy lemma \cite[Lemma 5.3]{Ev11}, $\mC^0_\w$ is homotopic equivalent to $ \mC_\w$. Taking a filling divisor $\Sigma \in \mC^0_\w$, we consider the following iterated fibration.  


\begin{equation}\label{summary}
\begin{tikzcd}[column sep=small]
\Symp_c(X\backslash \Sigma) = Stab^1(\Sigma) \arrow[r] \arrow[d, phantom, ""{coordinate, name=Z}] &
Stab^0(\Sigma) \arrow[r] \arrow[d] &
Stab(\Sigma) \arrow[r] \arrow[d] &
\Symp_h(X, \omega) \arrow[d] \\
{} &
\mG(\Sigma) &
\Symp(\Sigma) &
\mC^0_\w \simeq \mJ_{\w}^{reg}
\end{tikzcd}
\end{equation}

Here, $\mJ_{\w}^{reg}$ is the subspace of the space of $\w$-compatible almost complex structures $\mJ_{\w}$, such that each class of irreducible components in $\Sigma$ has an embedded $J$-holomorphic representative. In the remainder of this section, when $\w$ is clear from the context, we will suppress this subscript for simplicity. 

From a standard codimension argument, $\mJ_{\w}^{reg}$ is path connected; hence, $\mC^0$ is also path connected.\footnote{This follows from a cobordism by a $1$-parameter family of almost complex structures, with an additional local adjustments at each positive intersections between different components of the configurations at all time.  See more details in \cite[Appendix A]{Ev11}.} Any isotopy between two elements of $\cC^0$ can be upgraded to an ambient isotopy by Banyaga's extension.  Therefore, $\cC^0$ is equivalent to the set of symplectic configurations which are ambiently isotopic to $\Sigma$.  Here is a glossary of the terms in \eqref{summary}:

\begin{itemize}
    \item $\mC^0$: the space of filling divisors whose components intersect symplectically orthogonally.\\
    \item $\Symp(\Sigma)$: the group of automorphisms of $\Sigma$ induced by $\Symp(X,\w)$.\\
    \item $Stab(\Sigma)$: the subgroup of $\Symp_h(X, \w)$ that preserves a given embedded configuration $\Sigma$.\\
    \item $Stab^0(\Sigma)$: the subgroup of $Stab(\Sigma)$ that fixes $\Sigma$ point-wise.\\
    \item $\mG(\Sigma)$: the gauge group of the normal bundle of $\Sigma$, i.e., automorphisms of the normal bundle of $\Sigma$ induced by $Stab^0(\Sigma)$.\\
    \item $Stab^1(\Sigma)$: the subgroup of $Stab^0(\Sigma)$ that fixes the normal bundle of $\Sigma$.\\
\end{itemize}

Each vertical map in \eqref{summary} is the fibration of a transitive action of a Lie group, and the horizontal maps are the inclusion of the corresponding isotropy group.   In addition, for some choice of $\lambda$, the above iterated fibrations behave particularly well:

\begin{itemize}
    \item For $n\ge6$,  when $\lambda$ satisfies
\begin{equation}\label{e:lambdaineq}
   \lambda > \frac{n-3}{n-1}, \quad \lambda \in \Q.
\end{equation}
The complement of $\Sigma$ is convex under this assumption, so one may prove that the compactly supported symplectomorphism group of $X \setminus \Sigma$ is contractible; see \cite[Lemma 5.5, 5.6]{LLW22}. 
Note that this condition forces positivity for any $\omega$ of type $\DD$.

  \item For $n=5$ and all possible 
\begin{equation}
    \lambda\in[{1}/{3}, 1]\cap \Q
\end{equation}
for which $\w$ is symplectic, $Symp_c(X\setminus\Sigma)\sim \Z$ \cite[Proposition 3.8]{LLWsmall}.
\end{itemize}

As a consequence, we have the following assertion.

\begin{proposition}\label{p:stab0ham}
   Let $(X,\w)$ be of pure type $\DD$. Suppose one of the following holds:
   
   \begin{itemize}
       \item $n\ge6$, $\lambda$ satisfies \eqref{e:lambdaineq}. 
       \item  $n=5$, $\lambda\in\Q$
   \end{itemize} 
   Then the map $\pi_0(Stab^0(\Sigma)) \to \pi_0(Stab(\Sigma))$ is trivial.
   Therefore, if $f \in Stab^0(\Sigma)$, then $f$ is Hamiltonian isotopic to the identity.    
\end{proposition}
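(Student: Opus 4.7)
The plan is to extract $\pi_0$-information from the bottom fibration of \eqref{summary}, namely
\[
Stab^1(\Sigma)=\Symp_c(X\setminus\Sigma) \longrightarrow Stab^0(\Sigma) \longrightarrow \mathcal{G}(\Sigma),
\]
with its associated long exact sequence
\[
\pi_1(\mathcal{G}(\Sigma)) \xrightarrow{\partial} \pi_0(Stab^1(\Sigma)) \to \pi_0(Stab^0(\Sigma)) \to \pi_0(\mathcal{G}(\Sigma)).
\]
Once one shows the map $\pi_0(Stab^0(\Sigma))\to\pi_0(Stab(\Sigma))$ is trivial, the Hamiltonian conclusion is automatic: $X$ is a rational surface, hence simply connected, so $\Symp_0(X,\omega)=\Ham(X,\omega)$, and an $f\in Stab^0(\Sigma)$ that becomes trivial in $\pi_0(Stab(\Sigma))\subset \pi_0(\Symp(X,\omega))$ is necessarily Hamiltonian isotopic to the identity.

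The first input I would establish is $\pi_0(\mathcal{G}(\Sigma))=0$. Every irreducible component of $\Sigma$ is a symplectic $2$-sphere whose normal bundle is a complex line bundle, and the gauge group of such a bundle is $\mathrm{Map}(S^2,U(1))$; since $[S^2,S^1]=H^1(S^2;\mathbb{Z})=0$, this is path-connected. The transverse nodal intersections impose only finite-dimensional torus conditions on the gauge transformations at the nodes, which do not change $\pi_0$. Hence $\pi_0(\mathcal{G}(\Sigma))=0$, and from the exact sequence the group $\pi_0(Stab^0(\Sigma))$ is a quotient of $\pi_0(\Symp_c(X\setminus\Sigma))$ by the image of $\partial$.

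For the case $n\geq 6$ with $\lambda$ satisfying \eqref{e:lambdaineq}, $\Symp_c(X\setminus\Sigma)$ is contractible by the cited results from \cite{LLW22}, so $\pi_0(Stab^0(\Sigma))=0$ outright and the composition with $\pi_0(Stab(\Sigma))$ vanishes trivially. For $n=5$, $\Symp_c(X\setminus\Sigma)\simeq \mathbb{Z}$ by \cite[Proposition 3.8]{LLWsmall}; the generator is realised by a compactly supported Dehn twist $\tau_L$ around a Lagrangian $2$-sphere $L\subset X\setminus\Sigma$ (a vanishing cycle for the divisorial degeneration). The remaining task is to kill this class in $\pi_0(Stab(\Sigma))$, and I would do so by identifying $[\tau_L]$ with an element of $\mathrm{image}(\partial)$: the $2\pi$-rotation of the normal $S^1$-bundle of the component of $\Sigma$ across which $L$ vanishes defines a loop $\gamma\in\pi_1(\mathcal{G}(\Sigma))$, and a standard model calculation in a Weinstein neighbourhood of that component (together with the local picture of $L$ as a matching cycle running between two such nodes) identifies the lift of $\gamma$ to $Stab^0(\Sigma)$ as terminating at a symplectomorphism isotopic to $\tau_L$ in $Stab^1(\Sigma)$. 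Consequently $[\tau_L]=\partial[\gamma]$ is already zero in $\pi_0(Stab^0(\Sigma))$, and we conclude as before.

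The main obstacle is precisely this $n=5$ step: one has to produce an honest geometric loop of normal-bundle rotations whose monodromy is the Lagrangian Dehn twist. Everything else in the proof is formal from the fibration sequence, the computation $\pi_0(\mathcal{G}(\Sigma))=0$, and simple connectedness of $X$.
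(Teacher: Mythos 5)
Your computation $\pi_0(\mathcal{G}(\Sigma))=0$ is incorrect, and this is the crux of the gap. The gauge transformations in $\mathcal{G}(\Sigma)$ are not arbitrary automorphisms of the normal line bundles of the components: they are \emph{induced by elements of $Stab^0(\Sigma)$}. Since an ambient symplectomorphism $\phi$ fixing $\Sigma$ pointwise fixes both intersecting components through each node $p$, and $T_pX$ splits as $T_pC_1\oplus T_pC_2 = N_pC_2\oplus N_pC_1$, the differential $d\phi_p$ is forced to be the identity. So the induced gauge transformation of $N\Sigma$ must equal the identity at every node. For a single sphere component with $k$ nodes, the subspace of $\mathrm{Map}(S^2,U(1))$ of maps sending $k$ marked points to $1$ sits in a fibration over $U(1)^k$ with total space $\mathrm{Map}(S^2,U(1))\simeq U(1)$, and the long exact sequence gives $\pi_0 \cong \mathbb{Z}^k/\Delta(\mathbb{Z})\cong\mathbb{Z}^{k-1}$. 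Summing this over the components of $\Sigma$ (the $2H-E_1-\cdots-E_5$ component has $n-1$ nodes, each $H-E_1-E_i$ for $i\ge6$ has $2$, and the $E_j$'s have $1$ each) yields $\pi_0(\mathcal{G}(\Sigma))\cong\mathbb{Z}^{2n-7}$ --- precisely what \cite[Eq.~(31)]{LLW22} records. So the statement ``node conditions do not change $\pi_0$'' is where your argument fails: they are exactly what create the nontrivial $\pi_0$.

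Consequently $\pi_0(Stab^0(\Sigma))\cong\mathbb{Z}^{2n-7}\neq 0$ for $n\ge6$, and the proposition cannot be proved by showing this group vanishes. The paper's argument uses the \emph{other} fibration in \eqref{summary}, namely $Stab^0(\Sigma)\to Stab(\Sigma)\to\Symp(\Sigma)\simeq(S^1)^{2n-6}\times\Diff^+(S^2,n-1)$. By a lemma of Li--Li--Wu (\cite[Lemma 2.9]{LLWnle4}) the connecting map $\pi_1\left((S^1)^{2n-6}\times\Diff^+(S^2,n-1)\right)\to\pi_0(Stab^0(\Sigma))=\mathbb{Z}^{2n-7}$ is \emph{surjective}; exactness of the homotopy sequence then forces $\pi_0(Stab^0(\Sigma))\to\pi_0(Stab(\Sigma))$ to be the zero map, even though the source is nontrivial. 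Your reduction of the $n=5$ case to a single Dehn twist being hit by $\partial:\pi_1(\mathcal{G}(\Sigma))\to\pi_0(Stab^1(\Sigma))$ is plausible in spirit (and indeed this is close to how the cited lemmas of Evans and Li--Li--Wu proceed), but it rests on the same faulty $\pi_0(\mathcal{G}(\Sigma))=0$ input, and you flag yourself that this step is the unverified obstacle.
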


\begin{proof}
For $n=5$, this was proved in  \cite[Lemma 4.1]{Ev11} and  \cite[Lemma 3.9]{LLWsmall}; see also \cite[Lemma 3.18]{LLWsmall}.  For $n\ge6$, this property has also been explained in the discussion above \cite[Lemma 5.7]{LLW22}. For the sake of the reader's convenience, we go through the logical steps of the proof without going into the calculations.  

When $\lambda$ satisfies \eqref{e:lambdaineq}, $Symp_c(X\setminus\Sigma)$ is weakly contractible and the fibration sequence becomes:

\begin{equation}\label{summary2}
\begin{tikzcd}[column sep=small]
\{pt\} \arrow[r] &
Stab^0(\Sigma) \arrow[r] \arrow[d] &
Stab(\Sigma) \arrow[r] \arrow[d] &
Symp_h(X, \omega) \arrow[d] \\
{} &
\mathbb{Z}^{2n-7} &
(S^1)^{2n-6} \times \Diff^+(S^2,n-1) &
\mC^0 \simeq \mJ_{\w}^{reg}
\end{tikzcd}
\end{equation}

See \cite[Equation (31)]{LLW22}. Therefore, $Stab^0(\Sigma)$ is homotopic to $\mathbb{Z}^{2n-7}$. By \cite[Lemma 2.9]{LLWnle4}, the map $\pi_1((S^1)^{2n-6} \times \Diff^+(S^2,n-1)) \to \pi_0(Stab^0(\Sigma))=\mathbb{Z}^{2n-7}$ is surjective.
It implies that $\pi_0(Stab^0(\Sigma)) \to \pi_0(Stab(\Sigma))$ is the zero map, by the homotopy exact sequence.

\end{proof}


The group $\Diff^+(S^2,n-1)$ in the proof of Proposition \ref{p:stab0ham} is the group of orientation-preservation diffeomorphisms of a sphere with $n-1$ marked points such that the marked points are {\bf pointwise-fixed}.
We have a fibration 
\begin{equation}\label{eq:DiffSeq}
\begin{tikzcd}[column sep=small]
\Diff^+(S^2,n) \arrow[r] &
\Diff^+(S^2) \arrow[d] \\
{} &
(S^2)^n\setminus \Delta
\end{tikzcd}
\end{equation}
where $(S^2)^n\setminus \Delta$ consists  of $n$ ordered pairwise distinct points on $S^2$, $\Diff^+(S^2)$ acts on $(S^2)^n\setminus \Delta$ transitively and the stablizer is  $\Diff^+(S^2,n)$.
We have the homotopy exact sequence $\pi_1(\Diff^+(S^2)) \to \pi_1((S^2)^n\setminus \Delta) \to \pi_0(\Diff^+(S^2,n)) \to  \pi_0((S^2)^n\setminus \Delta)=0,$ 
and it is well-known that the first arrow is injective.
This gives us 
\[
\pi_0(\Diff^+(S^2,n))=\pi_1((S^2)^n\setminus \Delta)/\pi_1(\Diff^+(S^2))=PB_{n}(S^2)/(\mathbb{Z}/2).
\]
The following property of (the quotient of the) pure braid groups plays a crucial role in our argument. 

\begin{lemma}[Bihopfian property, see \cite{LLWsmall} Lemma 3.4 and \cite{LLW22} Lemma 5.2]\label{l:bihopfian}
    Let $G=PB_{n}(S^2)/(\mathbb{Z}/2)=\pi_0(\Diff^+(S^2,n))$. Any group epimorphism or monomorphism $G \to G$ is an isomorphism.
\end{lemma}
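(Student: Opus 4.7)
Proof plan: I would split the bihopfian property into its two halves (Hopfian — every epimorphism is an iso; co-Hopfian — every monomorphism is an iso) and attack each one by exploiting the tower-of-extensions structure of $G$ coming from the Fadell-Neuwirth strand-forgetting fibrations. Concretely, forgetting the last marked point gives, for $n\ge 4$, a short exact sequence
\[
1 \to F_{n-2} \to G_n \to G_{n-1} \to 1,
\]
where $F_{n-2} = \pi_1(S^2\setminus\{n-1\text{ pts}\})$ is free of rank $n-2$ and where the central $\mathbb{Z}/2$ pushes forward compatibly along the tower (the full twist in $PB_n(S^2)$ maps to the full twist in $PB_{n-1}(S^2)$), so that the quotient sequence is well-defined. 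The base case $G_3=1$ follows from $PB_3(S^2)=\mathbb{Z}/2$. Hence $G_n$ is torsion-free and poly-free; in the regime relevant to the paper ($n\ge 5$) it has cohomological dimension $n-3\ge 2$ and nonzero rational Euler characteristic $\chi(G_n)=\pm(n-3)!$ by multiplicativity.

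For the Hopfian direction I would appeal to Malcev's theorem: $G_n$ is finitely generated, and residual finiteness passes from the surface pure braid group $PB_n(S^2)$ (residually finite by Grossman-type arguments, or by linearity of surface braid groups) down to the quotient by the finite central $\mathbb{Z}/2$. Any finitely generated residually finite group is Hopfian, so every surjection $G_n\to G_n$ is injective and hence an isomorphism.

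For the co-Hopfian direction I would use a cohomological-dimension/Euler-characteristic argument. Given an injection $\phi\colon G_n\hookrightarrow G_n$, the image $\phi(G_n)$ is abstractly isomorphic to $G_n$ and therefore has the same cohomological dimension $n-3$ as the ambient group. Since the poly-free groups arising here are duality groups, Strebel's theorem on subgroups of duality groups implies that a subgroup realizing the full cohomological dimension must have finite index. Multiplicativity of Euler characteristic then gives
\[
\chi(G_n)=\chi(\phi(G_n))=[G_n:\phi(G_n)]\cdot \chi(G_n),
\]
and since $\chi(G_n)\ne 0$ we conclude $[G_n:\phi(G_n)]=1$, i.e.\ $\phi$ is surjective.

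The main obstacle is upgrading the cohomological-dimension equality $\mathrm{cd}(\phi(G_n)) = \mathrm{cd}(G_n)$ to the statement that $\phi(G_n)$ is of finite index: this requires verifying the duality-group hypothesis (or an equivalent rigidity property) for each stage of the poly-free tower, which becomes delicate as $n$ grows. A more structural alternative, which I would keep in reserve, is to follow Ivanov and realize $\phi$ as induced by a self-diffeomorphism of the punctured sphere via the action of $G_n$ on the curve complex; this yields a cleaner conclusion but imports a substantial amount of mapping-class-group rigidity. Either route gives the lemma for $n\ge 5$, which is the only regime used in the applications.
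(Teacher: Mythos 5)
Your Hopfian half is sound: $PB_n(S^2)$ is residually finite, residual finiteness descends to the quotient by the finite central $\mathbb{Z}/2$ (to separate $g\notin N$ when $N$ is finite, pick a finite-index normal $K$ avoiding the finite set $\{gn^{-1}:n\in N\}$ and use $KN/N$), and Mal'cev applies. The co-Hopfian half, however, turns on a false step. Strebel's theorem is a theorem about Poincar\'e duality groups, and the conclusion you extract from it — that a subgroup attaining the full cohomological dimension must have finite index — is simply untrue for duality groups at large. Already $F_2\times F_2$ is a duality group of dimension $2$ containing $\mathbb{Z}^2$ at infinite index with cohomological dimension $2$; worse, $F_2\times F_2$ admits proper self-embeddings (apply $a\mapsto a^2,\ b\mapsto b$ in each factor), so even an isomorphic copy of the ambient group can sit at infinite index. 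Your $G_n$ for $n\ge 5$ is an iterated extension by free groups of rank $\ge 2$; it is a duality group but not remotely a PD group, and nothing in your argument excludes the analogous pathology. The Euler-characteristic computation presupposes finite index and never gets to run. You correctly flag this as ``the main obstacle,'' but the obstacle is not ``verifying the duality-group hypothesis''; that hypothesis holds and does not help.

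The alternative you mention in passing — rigidity of the curve complex of the $n$-punctured sphere in the style of Ivanov, showing that any injection $G\hookrightarrow G$ is induced by a homeomorphism — is the argument that actually delivers co-Hopficity for $n\ge 5$, and it is what the sources cited in the lemma (\cite{LLWsmall} Lemma 3.4, \cite{LLW22} Lemma 5.2) rely on; you named the right tool and then set it aside instead of executing it. A small caveat on the statement itself: as literally written the lemma would include $n=4$, where $G_4\cong F_2$ is Hopfian but not co-Hopfian, so the restriction $n\ge 5$ you impose is genuinely required and is consistent with how the lemma is deployed in the paper (it is fed $\pi_0(\Diff^+(S^2,m))$ with $m\ge 5$).
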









\section{$Stab^0(\Sigma)$ is in $\Ham$}
\label{sec:Stab0}

In this section, we generalize Proposition \ref{p:stab0ham} to all positive symplectic forms of type $\DD$, regardless of assumption \ref{e:lambdaineq} and purity. More precisely, the rest of the section is devoted to the proof of the following statement:

\begin{theorem}\label{t:stab0ham}
   Let $(X,\omega)$ be a positive rational surface of pure type $\DD$, and $\Sigma$ be a filling divisor. If $g \in Stab^0(\Sigma)$, then $g$ is Hamiltonian isotopic to the identity.    
\end{theorem}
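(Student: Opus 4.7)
My plan is to reduce Theorem~\ref{t:stab0ham} to Proposition~\ref{p:stab0ham} by deforming the given form $\omega$ to a symplectic form satisfying \eqref{e:lambdaineq}, combining family inflation with the bihopfian property of $G = PB_{k}(S^2)/(\mathbb{Z}/2)$.

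First I would produce the deformation. By Lemma~\ref{l:nefemb}, the class $H - E_1$ admits an embedded $J$-holomorphic representative $C$. Standard inflation along $C$ yields a one-parameter family $\omega_t$ of symplectic forms with cohomology classes $[\omega_t] = [\omega] + t \cdot \mathrm{PD}(H - E_1)$. A direct intersection-number check against $H - E_1$ shows every irreducible component of $\Sigma$ depicted in Figure~\ref{Cn} or Figure~\ref{conf5} has non-negative intersection with $C$, so $\Sigma$ remains a filling divisor for each $\omega_t$. After renormalizing to the canonical form, the parameter $\lambda$ evolves to $(\lambda + t)/(1+t)$, which tends to $1$ as $t \to \infty$; since purity is preserved along the family, one can choose $T$ large enough that $\omega' := \omega_T$ satisfies \eqref{e:lambdaineq}.

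Next I would apply family inflation in the sense of \cite{McDuffErratum, LU06, Buse11, MO15} to obtain a comparison of the fibrations \eqref{summary} across $\omega_t$. Concretely, I would aim for a commutative diagram of pointed sets
\[
\begin{tikzcd}
\pi_0(Stab^0_\omega(\Sigma)) \arrow[r] \arrow[d] & \pi_0(Stab_\omega(\Sigma)) \arrow[r] \arrow[d] & \pi_0(Symp_h(X,\omega)) \arrow[d, "\theta"] \\
\pi_0(Stab^0_{\omega'}(\Sigma)) \arrow[r] & \pi_0(Stab_{\omega'}(\Sigma)) \arrow[r] & \pi_0(Symp_h(X,\omega'))
\end{tikzcd}
\]
whose vertical maps come from Moser-type diffeomorphisms built from the inflation family and preserve $\Sigma$ setwise, and with both right-hand groups identified with $G$ via Theorem~\ref{theorem:SMCG}. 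By Proposition~\ref{p:stab0ham} the bottom composition vanishes, so it suffices to prove $\theta : G \to G$ is an isomorphism. To achieve this I would verify $\theta$ is surjective: by Theorem~\ref{theorem:SMCG}, $G$ is generated by Dehn twists along Lagrangian spheres whose classes form the Lagrangian root system, which depends only on the type-$\mathbb{D}$ structure and is therefore common to $\omega$ and $\omega'$; hence every generator of the target admits a preimage. The bihopfian property of $G$ (Lemma~\ref{l:bihopfian}) then promotes surjectivity to an isomorphism.

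With $\theta$ an isomorphism, commutativity of the diagram forces the top composition $\pi_0(Stab^0_\omega(\Sigma)) \to \pi_0(Symp_h(X,\omega))$ to vanish, so any $g \in Stab^0(\Sigma)$ is connected to the identity through $Symp_h(X, \omega)$. Since $X$ is simply connected, $\Symp_0(X,\omega) = \Ham(X,\omega)$ and we conclude $g \in \Ham(X,\omega)$. The main obstacle I anticipate is producing the comparison diagram rigorously: one must execute the family inflation so that $\Sigma$ remains a filling divisor uniformly along the path $\omega_t$, arrange that the resulting Moser-type conjugations preserve $\Sigma$ setwise and act on an equivariantly chosen family of $J_t \in \cJ_{\omega_t}^{reg}$, and then check that the induced maps on $\pi_0$ descend compatibly with the iterated fibration \eqref{summary}.
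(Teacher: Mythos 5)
Your high-level plan shares the paper's two main ingredients (inflation to reach a form satisfying \eqref{e:lambdaineq}, plus the bihopfian property), but the actual route is genuinely different and, as written, has a real gap precisely at the step you flag as the ``main obstacle.''

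The gap is the map $\theta:\pi_0(\Symp_h(X,\omega))\to\pi_0(\Symp_h(X,\omega'))$. The forms $\omega$ and $\omega'$ are not cohomologous, so there is no Moser diffeomorphism intertwining them, and an inflation family is a family of non-cohomologous forms; hence there is no ready-made diffeomorphism of $X$ inducing your vertical arrows. One can try to define $\theta$ by transporting each $g\in\Symp_h(X,\omega)$ along the inflation (e.g.\ inflating along both a $J_0$-holomorphic fibre and a $g_*J_0$-holomorphic fibre and applying Moser between the resulting cohomologous forms), but you would then have to show that the output is independent of all the choices on $\pi_0$, that it is a group homomorphism, and that it makes the whole iterated fibration diagram commute. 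None of that is routine, and your surjectivity argument inherits the problem: it implicitly assumes $\theta$ sends a Dehn twist along a Lagrangian sphere for $\omega$ to a Dehn twist along a Lagrangian sphere in the same class for $\omega'$, which is exactly the kind of statement one would have to establish after defining $\theta$.

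The paper sidesteps all of this. Proposition~\ref{p:bihopfian} is a diagram chase at a \emph{fixed} $\omega$: bihopfianness is used there to show that $v'\circ u=0$ is equivalent to $v\circ u'=0$, and that either implies $\Stab^0(\Sigma)\subset\Ham$. The inflation then enters only to transport \emph{loops} $\gamma\in\pi_1(\cC^0_{\Sigma,\omega})$ to loops $\hat\gamma\in\pi_1(\cC^0_{\Sigma,\hat\omega})$, by inflating along the \emph{moving} nodal fibre $F_t\subset\gamma(t)$ (the union of the $E_6$- and $(H-E_1-E_6)$-components), so that each $\gamma(t)$ literally remains a symplectic divisor for the inflated forms. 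A square-parametrised Moser argument (Lemma~\ref{l:Rfamily} and the following lemma) produces the loop $\hat\gamma$, and then a direct comparison of the induced mapping classes on the $(2H-E_1-\cdots-E_5)$-component -- with a small Moser correction $d_t$ supported away from the marked points -- shows $v'_\omega\circ u_\omega(\gamma)=v'_{\hat\omega}\circ u_{\hat\omega}(\hat\gamma)=0$. So the paper never needs to compare the two Torelli groups; the bihopfian property is used only in Proposition~\ref{p:bihopfian}, not to analyze a comparison map.

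If you want to salvage your route, the honest content you would need to supply is (i) a well-defined homomorphism $\theta$ on $\pi_0$ that makes the diagram commute and (ii) injectivity or surjectivity of $\theta$, and I don't see a shortcut around constructing something like the paper's loop-level comparison to verify (i). Given that, you would in effect be reproving the paper's lemma in disguise, and it is cleaner to argue with $\pi_1(\cC^0_{\Sigma,\omega})$ directly as the paper does.

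Two small additional remarks. First, you write that the Moser-type vertical maps should preserve $\Sigma$ ``setwise,'' but your leftmost column is $\pi_0(\Stab^0)$, which requires pointwise preservation; the construction would need to be arranged accordingly. Second, your intersection check and the observation that purity and type $\DD$ are preserved under the renormalized inflation are both correct, and your conclusion from the top row vanishing (simple connectedness gives $\Symp_0=\Ham$) matches the paper.
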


The majority of the proof is devoted to the case where $X=X_n$ is of pure type $\mathbb{D}$; in fact, in this case it is of type $\DD_k$, with $k=n-1$. The case of $n=5$ follows from exactly the same argument and will be briefly discussed in Section \ref{sec:pure5}.

\subsection{Homotopy exact sequences and a diagram chasing}


The fibration diagram \eqref{summary2} gives us homotopy exact sequences.
In particular, we are interested in the following diagram with exact rows.

\begin{center}
\begin{tikzcd}
 \pi_1(\cC^0_{\Sigma,\w_{\lambda}}) \arrow{r}{u}  & \pi_0(Stab(\Sigma,\w_{\lambda})) \arrow{r}{v}\arrow{d}{=} &\pi_0(\Symp_h(X,\omega_{\lambda})) \cong PB_{n-1}(S^2)/(\Z/2)   \\
  \pi_0(Stab^0(\Sigma,\w_{\lambda})) \arrow{r}{u'} & \pi_0(Stab(\Sigma,\w_{\lambda})) \arrow{r}{v'} & \pi_0(Aut(S^2,n-1))\cong PB_{n-1}(S^2)/(\Z/2) 
 \end{tikzcd}
 \end{center}
The maps $u,v,u',v'$ depend on $\omega_{\lambda}$, but we don't indicate it in the notations to lighten the notation.  The top-right isomorphism follows from \cite{LLW22}.

\begin{proposition}\label{p:bihopfian}
Fix $\omega_\lambda$, then the following two conditions are equivalent

\begin{itemize}
    \item $v'\circ u$ equals to $0$,
    \item $v \circ u'$ equals to $0$.
\end{itemize}  

If either holds, every element in $Stab^0(\Sigma,\w_{\lambda})$ is a Hamiltonian diffeomorphism.
\end{proposition}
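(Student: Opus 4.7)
The plan is to derive the equivalence from the bihopfian property of $PB_{n-1}(S^2)/(\Z/2)$ recorded in Lemma~\ref{l:bihopfian}. The first step is to observe that both $v$ and $v'$ are surjective onto $PB_{n-1}(S^2)/(\Z/2)$: surjectivity of $v$ follows from the path-connectedness of $\cC^0_{\Sigma,\omega_\lambda}$, which makes $\pi_0(\cC^0) = 0$ terminate the long exact sequence of the fibration $Stab(\Sigma) \to \Symp_h(X) \to \cC^0$; surjectivity of $v'$ follows from the fibration $Stab^0(\Sigma) \to Stab(\Sigma) \to \Symp(\Sigma)$ composed with the projection of $\Symp(\Sigma)$ onto its $\Diff^+(S^2,n-1)$ factor (cf.~\eqref{summary2}), together with the identification $\pi_0(\Diff^+(S^2,n-1)) = PB_{n-1}(S^2)/(\Z/2)$ explained in~\eqref{eq:DiffSeq}. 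By exactness of each row, the condition $v'\circ u = 0$ is equivalent to $\im(u) \subseteq \im(u')$, and dually $v\circ u' = 0$ is equivalent to $\im(u') \subseteq \im(u)$.

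The key step is then to promote either one-sided inclusion to equality by invoking bihopfian rigidity. Suppose $\im(u) \subseteq \im(u')$. Then there is a canonical projection of quotient groups
\[
\pi_0(Stab(\Sigma,\omega_\lambda))/\im(u) \twoheadrightarrow \pi_0(Stab(\Sigma,\omega_\lambda))/\im(u'),
\]
and the surjectivity established in the first step identifies the left-hand side with $\pi_0(\Symp_h(X,\omega_\lambda)) \cong PB_{n-1}(S^2)/(\Z/2)$ via $v$ and the right-hand side with $\pi_0(\Aut(S^2,n-1)) \cong PB_{n-1}(S^2)/(\Z/2)$ via $v'$. Hence the projection becomes a surjective endomorphism of $PB_{n-1}(S^2)/(\Z/2)$, which Lemma~\ref{l:bihopfian} upgrades to an isomorphism. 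Therefore $\im(u) = \im(u')$, and in particular $v\circ u' = 0$; the reverse implication is obtained by interchanging the roles of $u$ and $u'$.

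For the Hamiltonicity conclusion, under either (equivalently, both) of the vanishing conditions, every $g \in Stab^0(\Sigma,\omega_\lambda)$ satisfies $v(u'([g])) = 0$ in $\pi_0(\Symp_h(X,\omega_\lambda))$, so $g$ lies in the identity component $\Symp_0(X,\omega_\lambda)$. Since $X = \CP^2 \# n\ov\CP^2$ is simply connected, $\Symp_0(X,\omega_\lambda) = \Ham(X,\omega_\lambda)$, whence $g$ is Hamiltonian. The entire argument is essentially formal apart from one nontrivial ingredient — bihopfian rigidity — which is precisely what turns a one-sided containment of images into equality; everything else is diagram chasing in the long exact sequences of the two fibrations.
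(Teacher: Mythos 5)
Your proof is correct and follows essentially the same route as the paper's: use exactness to read $v'\circ u = 0$ as $\im(u)\subseteq\im(u')$, form the induced surjection of quotients, identify both quotients with $PB_{n-1}(S^2)/(\Z/2)$ via $v$ and $v'$, and invoke the bihopfian property (Lemma~\ref{l:bihopfian}) to promote the inclusion to equality. The only difference is that you spell out the surjectivity of $v$ and $v'$ and the final passage from $\pi_0(\Symp_h)$-triviality to Hamiltonicity, which the paper leaves implicit.
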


\begin{proof}
Suppose that $v'\circ u=0$. Then $\im(u')\supset \im(u)$.  
Therefore, we have the quotient map 
\[
\pi_0(Stab(\Sigma,\w_{\lambda}))/\im(u)\to \pi_0(Stab(\Sigma,\w_{\lambda}))/\im(u')
\]  
By the exactness of the sequence, both the domain and target of this map are non-canonically isomorphic to $\pi_0(Aut(S^2,n-1))\cong PB_{n-1}(S^2)/(\Z/2)$.

The bihopfian property implies that this quotient map is an isomorphism.  Therefore, $\im(u)=\im(u')$, and that $\im(v\circ u')=v(\im(u))=0$.  Since the inclusion-induced map
\[\pi_0(Stab^0(\Sigma,\w_{\lambda}))\to\pi_0(Symp_h(X,\w_{\lambda}))\]
is exactly the map $v\circ u'$, it is trivial and hence every element in $Stab^0(\Sigma,\w_{\lambda})$ is a Hamiltonian diffeomorphism.

Conversely, if $v \circ u'=0$, then the same argument with the roles of $u$ and $u'$ exchanged shows that $v' \circ u=0$.
\end{proof}

\begin{lemma}\label{l:small}
If $\lambda$ satisfies \eqref{e:lambdaineq}, then the map $\pi_1(\cC^0_{\Sigma,\w_{\lambda}})\xrightarrow{v'\circ u}\pi_0(Aut(S^2,n-1))$ is trivial.
\end{lemma}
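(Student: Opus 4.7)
The plan is to deduce Lemma~\ref{l:small} directly by combining the two preceding results, Proposition~\ref{p:stab0ham} and Proposition~\ref{p:bihopfian}. The key observation is that under the assumption \eqref{e:lambdaineq}, Proposition~\ref{p:stab0ham} provides exactly the input needed to trigger the bihopfian argument of Proposition~\ref{p:bihopfian}, and so the conclusion of Lemma~\ref{l:small} follows without any new geometric input.

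More explicitly, I would proceed as follows. First, I would invoke Proposition~\ref{p:stab0ham}: under \eqref{e:lambdaineq}, every element of $Stab^0(\Sigma,\omega_\lambda)$ is Hamiltonian isotopic to the identity, and in particular the inclusion-induced map $u' \colon \pi_0(Stab^0(\Sigma,\omega_\lambda)) \to \pi_0(Stab(\Sigma,\omega_\lambda))$ is the zero map. Consequently, the composition $v \circ u' \colon \pi_0(Stab^0(\Sigma,\omega_\lambda)) \to \pi_0(\Symp_h(X,\omega_\lambda))$ is also trivial. Second, I would apply the equivalence established in Proposition~\ref{p:bihopfian}: the conditions $v \circ u' = 0$ and $v' \circ u = 0$ are equivalent (this equivalence is the content of the diagram chase plus Lemma~\ref{l:bihopfian}). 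Plugging in the triviality of $v \circ u'$ obtained in the first step yields the triviality of $v' \circ u$, which is precisely the statement of Lemma~\ref{l:small}.

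There is essentially no genuine obstacle here; the proof is a one-line application of the equivalence in Proposition~\ref{p:bihopfian}. The only thing to verify carefully is that the maps $u,v,u',v'$ appearing in the statement of Lemma~\ref{l:small} are the same as those in the diagram preceding Proposition~\ref{p:bihopfian}, and that the identification $\pi_0(\Symp_h(X,\omega_\lambda)) \cong \pi_0(\Aut(S^2,n-1)) \cong PB_{n-1}(S^2)/(\Z/2)$ (which ultimately goes through Theorem~\ref{theorem:SMCG} and the homotopy exact sequence for \eqref{eq:DiffSeq}) is the identification used implicitly in the bihopfian step. Both are immediate from the setup in Section~2.

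I expect the proof to be essentially a short paragraph, serving as the bridge that lets us leverage the stronger (but more restrictive) Proposition~\ref{p:stab0ham} to obtain information about $\pi_1(\cC^0_{\Sigma,\omega_\lambda})$, which will be the starting point for the subsequent extension to all positive type $\DD$ forms in the deformation/inflation argument that follows.
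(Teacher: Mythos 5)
Your proof is correct and matches the paper's argument exactly: Proposition~\ref{p:stab0ham} gives $v\circ u' = 0$ under \eqref{e:lambdaineq}, and the equivalence in Proposition~\ref{p:bihopfian} then yields $v'\circ u = 0$. The only minor imprecision is the phrase ``in particular,'' which suggests you deduce $u'=0$ from the statement that every element of $Stab^0(\Sigma,\omega_\lambda)$ is Hamiltonian --- the implication actually runs the other way --- but since Proposition~\ref{p:stab0ham} asserts $u'=0$ directly, the conclusion $v\circ u'=0$ is immediate regardless.
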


\begin{proof}
By Proposition \ref{p:stab0ham}, the map $v\circ u'$ is trivial when $\lambda$ satisfies \eqref{e:lambdaineq}. Therefore, $v'\circ u=0$ by Proposition \ref{p:bihopfian}.
\end{proof}

\subsection{An inflation argument}

Fix a type $\DD$ symplectic form $\w=\w_\lambda$.
We choose a $\hat\lambda$ such that it satisfies \eqref{e:lambdaineq} and $\lambda < \hat\lambda<1$. Consider a corresponding pure type $\DD$ form $\omega_{\hat\lambda}=:\hat\w$. 
Consider the corresponding maps 

\begin{equation}\label{eq:vu}
v'_{\w} \circ u_{\w}: \pi_1(\cC^0_{\Sigma,\w}) \to \pi_0(Aut(S^2,n-1))    
\end{equation}

and 
\begin{equation}\label{eq:vu'}
    v'_{\hat\w} \circ u_{\hat\w}: \pi_1(\cC^0_{\Sigma,\hat\w}) \to \pi_0(Aut(S^2,n-1)).
\end{equation}

The goal of this subsection is to use an inflation argument together with the triviality of 
$v'_{\hat\w} \circ u_{\hat\w}$ (i.e. Lemma \ref{l:small}) to show that $v'_{\w} \circ u_{\w}$ is trivial as well.

Let $I:=[0,1]$ and $\gamma: I \to \cC^0_{\w}$ satisfy $\gamma(0)=\gamma(1)=\Sigma$, so it represents a loop in $\pi_1(\cC^0_{\Sigma,\w})$\footnote{We use the domain $[0,1]$ rather than $S^1$ because later on we will consider some paths that do not close up and we want to use the same domain for comparison.}. 
There exists a path of almost complex structures $J_{\gamma}:[0,1]\to \cJ_\w$ such that $\gamma(t)$ is $J_{\gamma}(t)$-holomorphic for all $t \in I$ and $J_\gamma(0)=J_\gamma(1)$. 
By definition, $J_{\gamma}$ is a loop  in $\cJ_\w^{reg} \subset \cJ_\w$.
Recall that the unique $J_{\gamma}(t)$-holomorphic embedded representative of the class $H-E_1-E_6$ and $E_6$ are components of $\gamma(t)$. 
Denote the union of these two $J_{\gamma}(t)$-holomorphic embedded spheres by $F_t$.
The homology class of $F_t$ is $(H-E_1-E_6)+E_6=H-E_1$, which is the fibre class.

We apply the negative inflation to the two irreducible components of $F_t$ to get a family of symplectic forms, following Li-Usher \cite{LU06}, Buse \cite{Buse11}, and McDuff \cite{McDuffErratum}.  By inflating $E_6$ and $(H-E_1-E_6)$ alternately, we obtain an inflation of the symplectic class along the direction of the fiber class $H-E_1$.  See \cite[Lemma 3.21]{LLW22} for the general case when one inflates along multiple exceptional classes represented by embedded curves.

This leads to a family of symplectic forms $(\omega_{t,s})_{t \in I, s \in \mathbb{R}_{\ge 0}}$ such that $\omega_{t,0}=\omega$ for all $t \in I$, $\omega_{0,s}=\omega_{1,s}$ and
\begin{align*}
[\omega_{t,s}]&=[\omega_0]+s[F_t]\\
&=(H-\lambda E_1-\sum_{i=2}^n \frac{1-\lambda}{2}E_i)+s(H-E_1)\\
&=(1+s)H-(\lambda+s)E_1-\sum_{i=2}^n \frac{1-\lambda}{2}E_i\\
&=(1+s)\left(H-\frac{\lambda+s}{1+s}E_1-\sum_{i=2}^n \frac{1-\lambda}{2(1+s)}E_i\right)\\
&=(1+s)\left(H-\frac{\lambda+s}{1+s}E_1-\sum_{i=2}^n \frac{1-(\frac{\lambda+s}{1+s})}{2}E_i\right).
\end{align*}
Notice that $\frac{1}{1+s}\omega_{t,s}$ is of pure type $\DD$.
Pick $s_0$ such that $\frac{\lambda+s_0}{1+s_0}=:\hat\lambda$ satisfies \eqref{e:lambdaineq} and denote the pure type $\DD$ form $\frac{1}{1+s_0}\omega_{t,s_0}$ by $\w'_t$. 


The configurations of surfaces $\gamma(t)$, now viewed as inside $(X,\w'_t)$, give us a family of symplectic divisors $\gamma'(t)$ in $(X,\w'_t)$, because every component of $
\gamma(t)$ intersects $\w$ orthogonally with $F_t$ or is contained in $F_t$.

Let $R:=[0,1] \times [0,1]$, and we define a family of symplectic forms parametrized by $\partial R$ as follows.

\[
\omega'_r = 
\begin{cases}
\omega_t', & \text{if } r = (t,1), \\
\omega_0', & \text{otherwise } r\in\partial R.
\end{cases}
\]

By abuse of notation, we continue to use $\w'_t$ to denote $\w'_{(t,1)}$ later.  We also define the same family of $\gamma'(r)$ for $r\in\partial R$ and denote $\gamma'(t)$ as $\gamma'((t,1))$.  We next show that $\w'_r$ extends to the interior of $R$.


\begin{lemma}\label{l:Rfamily}
    There is an $R$-family of cohomologous symplectic forms $(\omega'_r)_{r \in R}$ on $X$, which extends $(\w'_r)_{r \in \partial R}$.
\end{lemma}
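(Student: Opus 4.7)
The plan is to build the required $R$-family via family inflation, using the contractibility of $\cJ_\w$ to extend the boundary almost complex structures over $R$ and inflating along a smoothly varying family of fibres representing $H-E_1$.

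Recall that each boundary form can be written as $\omega'_r=(1+s_0)^{-1}(\omega+s_0\alpha_r)$, where $\alpha_r$ is a closed $2$-form Poincar\'e dual to $[H-E_1]$ supported near the relevant $J$-holomorphic fibre: on the top edge $\alpha_{(t,1)}$ comes from the configuration $(H-E_1-E_6)\cup E_6\subset\gamma(t)$, while on the other edges $\alpha_r=\alpha_0$. Since the rescaling by $(1+s_0)^{-1}$ keeps the cohomology class equal to $[\omega'_0]$ throughout, it suffices to extend the family $\alpha_r$ from $\partial R$ to $R$ so that $\omega+s\alpha_r$ remains symplectic for every $s\in[0,s_0]$.

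First, I extend the loop of almost complex structures on $\partial R$ (equal to $J_\gamma(t)$ on the top edge and $J_\gamma(0)$ on the remaining edges) to a smooth $R$-family $J_r\in\cJ_\w$; this is possible since $\cJ_\w$ is contractible. By Lemma~\ref{l:nefemb}, the class $H-E_1$ admits an embedded $J_r$-holomorphic representative $F_r$ for each $r$, which may be chosen to depend smoothly on $r$. Since $(H-E_1)^2=0$, the family version of McDuff's inflation (see \cite{McDuffErratum, LU06, Buse11, MO15}) produces a smooth $R$-family of closed $2$-forms $\alpha_r$ supported in tubular neighborhoods of $F_r$, Poincar\'e dual to $[H-E_1]$, for which $\omega+s\alpha_r$ is symplectic for all $(r,s)\in R\times[0,s_0]$, and which agrees with the given data on $\partial R$. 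Setting $\omega'_r:=(1+s_0)^{-1}(\omega+s_0\alpha_r)$ completes the construction.

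The main technical obstacle is to execute the inflation continuously in $r$ while matching the prescribed boundary values exactly; this is essentially the content of the cited family-inflation theorems. The non-negativity $(H-E_1)^2=0$ removes any upper bound on the inflation parameter, and smooth $r$-dependence of the curves $F_r$ allows the tubular neighborhoods and cutoff data defining $\alpha_r$ to be patched over $R$. A minor caveat is that on the top edge the original inflation is along the reducible configuration $(H-E_1-E_6)\cup E_6$ rather than along a single irreducible representative of $H-E_1$; the resulting discrepancy lies in an exact $2$-form, which can be absorbed into $\alpha_r$ by a boundary-trivial adjustment without altering the cohomology class.
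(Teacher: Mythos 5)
Your overall strategy---extend $J_\gamma$ over $R$ via contractibility of $\cJ_\w$ and then do a family inflation---is the same as the paper's, but you choose the wrong curve to inflate along, and this creates a gap at the boundary-matching step that you acknowledge only as a ``minor caveat.''

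On $\partial R$ the prescribed forms $\omega'_r$ are produced by inflating along the \emph{nodal} $J_\gamma(r)$-holomorphic curve $F_r$, i.e.\ the two irreducible components in classes $H-E_1-E_6$ and $E_6$. You instead invoke Lemma~\ref{l:nefemb} to get an \emph{irreducible} embedded $J_r$-holomorphic sphere in class $H-E_1$ for each interior $r$, and inflate along that. These give cohomologous but pointwise distinct closed $2$-forms $\alpha_r$, and the lemma requires the extension to agree on $\partial R$ exactly, not merely in cohomology. Your fix---``the resulting discrepancy lies in an exact $2$-form, which can be absorbed into $\alpha_r$ by a boundary-trivial adjustment''---is the crux of the problem, not a footnote; it amounts to producing, continuously near $\partial R$, an explicit family of primitives interpolating between the irreducible-curve inflation form and the nodal-curve inflation form, with control on nondegeneracy throughout. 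There is also the subsidiary issue that Lemma~\ref{l:nefemb} gives a whole $\CP^1$-family (a foliation) of irreducible $H-E_1$ curves for each fixed $J$, so a coherent smooth choice $F_r$ over $R$ requires an argument you do not give.

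The paper sidesteps all of this. It uses Lemma~\ref{l:minimal} rather than Lemma~\ref{l:nefemb}: since for pure type $\DD$ the classes $H-E_1-E_6$ and $E_6$ both have \emph{minimal} $\omega$-area among exceptional classes, each has a \emph{unique} embedded $J$-holomorphic representative for \emph{every} $J\in\cJ_\w$ (regular or not). After a small perturbation making these two representatives meet $\omega$-orthogonally, one inflates over all of $R$ along exactly the same nodal configuration used on $\partial R$. This produces a canonical family with the boundary condition satisfied by construction, with no interpolation needed. You should replace your use of Lemma~\ref{l:nefemb} and the irreducible $H-E_1$ curve by Lemma~\ref{l:minimal} and the nodal $(H-E_1-E_6)\cup E_6$ configuration, after which the boundary-matching issue disappears.
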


\begin{proof}
    We can extend $J_\gamma$ to an $R$-family $J_{\gamma}:R \to \cJ_\w$ (not $\cJ_\w^{reg}$) because $\cJ_\w$ is contractible.
Since $H-E_1-E_6$ and $E_6$ are exceptional classes of the minimal area, for every $r \in R$, there is a unique embedded $J_{\gamma}(r)$-holomorphic representative.
By possibly perturbing $J_{\gamma}(r)$, we can assume that the $J_{\gamma}(r)$-holomorphic representative of $H-E_1-E_6$ and $E_6$ intersect orthogonally. We can then do a family inflation for $r \in R$ along these representatives as in the case when $r\in\partial R$. The result is an $R$ family of cohomologous symplectic forms as claimed. 
\end{proof}

Note that Lemma \ref{l:Rfamily} does not contradict the nontriviality of $\gamma(t)$ in general: when $J_{\gamma}(r) \notin \cJ_\w^{reg}$, we cannot find a corresponding $J_{\gamma}(r)$-holomorphic configuration in $\cC^0_{\Sigma,\w}$.  In the following lemma, we denote $r=(x,y) \in R$.

\begin{lemma}[Moser argument]
    There is a smooth family of diffeomorphisms $(\phi_{(x,y)})_{(x,y) \in R}$ such that $\phi_{(x,0)}=\phi_{(0,y)}=\phi_{(1,y)}=\id$ and $\phi_{(x,y)}^*\w'_{(x,y)}=\w'_{(x,0)}=\w'_{(0,0)}$ for all $(x,y) \in R$.

\end{lemma}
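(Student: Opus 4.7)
The plan is to carry out a parametric Moser argument over $R$, integrating in the $y$-direction with $x\in[0,1]$ as a passive parameter. Let $\omega_* := \omega'_{(0,0)}$; by the construction of $\omega'_r$ on $\partial R \setminus \{y=1\}$ we have $\omega'_{(x,0)} = \omega'_{(0,y)} = \omega'_{(1,y)} = \omega_*$ for all $x,y \in [0,1]$, and since every $\omega'_{(x,y)}$ lies in the same cohomology class, the 2-form $\omega'_{(x,y)} - \omega_*$ is exact.

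The first step I would take is to produce a smooth $R$-family of 1-forms $\sigma_{(x,y)}$ on $X$ satisfying $d\sigma_{(x,y)} = \omega'_{(x,y)} - \omega_*$ and vanishing identically on the two sides $\{x = 0\}$ and $\{x = 1\}$. Starting from any smooth family of primitives $\tilde\sigma_{(x,y)}$ of $\omega'_{(x,y)} - \omega_*$ (which exists via standard Hodge theory for a fixed Riemannian metric on $X$), the restrictions $\tilde\sigma_{(0,y)}$ and $\tilde\sigma_{(1,y)}$ are closed because there the corresponding forms coincide with $\omega_*$. Since $X$ is a rational surface, hence simply-connected, $H^1(X;\RR) = 0$, so those closed 1-forms are globally exact: pick smooth families of functions $f_y, g_y \colon X \to \RR$ with $df_y = \tilde\sigma_{(0,y)}$ and $dg_y = \tilde\sigma_{(1,y)}$ (uniquely determined by any canonical normalisation, e.g.\ zero-mean), extend them to a smooth $R$-family $h_{(x,y)}$ of functions on $X$ which restricts to $f_y$ on $\{x = 0\}$ and to $g_y$ on $\{x = 1\}$, and set $\sigma_{(x,y)} := \tilde\sigma_{(x,y)} - dh_{(x,y)}$.

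With $\sigma$ in hand, set $\alpha_{(x,y)} := \partial_y \sigma_{(x,y)}$, so $d\alpha_{(x,y)} = \partial_y \omega'_{(x,y)}$ and $\alpha$ vanishes on $\{x = 0\} \cup \{x = 1\}$. Define the $y$-time-dependent vector field $V_{(x,y)}$ by $\iota_{V_{(x,y)}} \omega'_{(x,y)} = -\alpha_{(x,y)}$; this is well-defined by non-degeneracy of $\omega'_{(x,y)}$, and $V \equiv 0$ along $\{x = 0,1\}$ by construction. Let $\phi_{(x,y)}$ be the time-$y$ flow of $V_{(x,\cdot)}$ with initial condition $\phi_{(x,0)} = \id$. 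The usual Moser computation
\[
\tfrac{d}{dy}\bigl(\phi_{(x,y)}^* \omega'_{(x,y)}\bigr) = \phi_{(x,y)}^* \bigl(d\iota_{V_{(x,y)}} \omega'_{(x,y)} + \partial_y \omega'_{(x,y)}\bigr) = \phi_{(x,y)}^*\bigl(-d\alpha_{(x,y)} + d\alpha_{(x,y)}\bigr) = 0
\]
then yields $\phi_{(x,y)}^* \omega'_{(x,y)} = \omega'_{(x,0)} = \omega_*$, while $\phi_{(x,0)} = \id$ is the initial condition and $\phi_{(0,y)} = \phi_{(1,y)} = \id$ follow from $V \equiv 0$ there. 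The only real obstacle is arranging the boundary vanishing of $\sigma$, and this is precisely where the simple-connectedness of the rational surface $X$ is used; the rest is a routine smoothly parametrised Moser flow, with smoothness of $\phi$ in $(x,y)$ inherited from smooth dependence on parameters in the defining ODE.
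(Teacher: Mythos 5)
Your argument is correct and follows essentially the same line as the paper's own (very terse) proof: both construct an $R$-family of $1$-forms $\alpha_{(x,y)}$ with $d\alpha_{(x,y)}=\partial_y\omega'_{(x,y)}$ vanishing on $\{x=0\}\cup\{x=1\}$, and then run the standard parametric Moser flow in the $y$-direction. The paper simply cites \cite[Section 3.2]{MSIntro} for the existence of such primitives; your proof fills in exactly what is being cited there, and correctly identifies that $H^1(X;\RR)=0$ (simple-connectedness of the rational surface) is what allows you to kill the closed restrictions $\tilde\sigma_{(0,y)}$, $\tilde\sigma_{(1,y)}$ by subtracting an exact form, which is the one genuinely nontrivial point.
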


\begin{proof}
We can construct an $R$-family of $1$-forms $\alpha_{(x,y)}$ such that $d\alpha_{(x,y)}=\frac{d}{dy} \omega_{(x,y)}$
such that $\alpha_{(0,y)}=0=\alpha_{(1,y)}$ for all $x, y \in [0,1]$ by \cite[Section 3.2]{MSIntro}.
We then run the Moser argument using the family $(\alpha_{(x,t)})_{t\in[0,1]}$ for every $x$ to get $\phi_{(x,y)}$.
\end{proof}

\begin{figure}
    \centering
    \includegraphics[scale=1.8, width=0.65\linewidth]{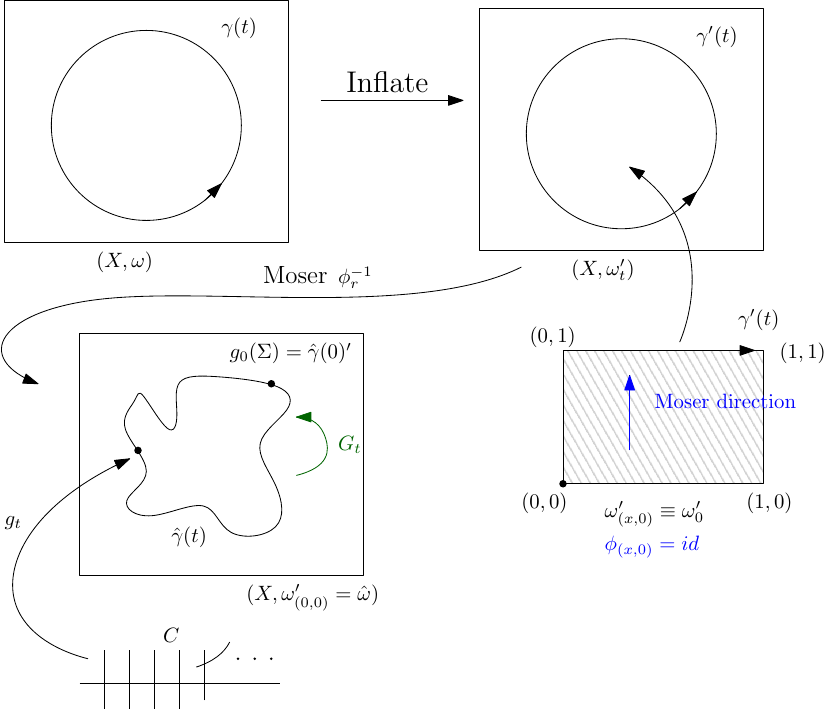}
    \caption{Construction of $\hat\gamma(t)$ from Moser techniques}
    \label{fig:Moser}
\end{figure}

\begin{corollary}\label{c:boundaryLoop}
The loop    $\phi_{(x,y)}^{-1}(\gamma'(x,y))$  for  $(x,y) \in \partial R$ is a loop of symplectic divisors in $(X,\w'_{(0,0)})$.  
\end{corollary}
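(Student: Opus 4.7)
The proof is a direct verification grounded in the Moser defining property $\phi_{(x,y)}^{*}\w'_{(x,y)} = \w'_{(0,0)}$. First I would check that for each $(x,y) \in \partial R$, the set $\phi_{(x,y)}^{-1}(\gamma'(x,y))$ is a symplectic divisor in $(X,\w'_{(0,0)})$. By construction the components of $\gamma'(x,y)$ are $\w'_{(x,y)}$-symplectic spheres with pairwise $\w'_{(x,y)}$-orthogonal intersections: on the top edge $\{y=1\}$ this is precisely what the family inflation along $F_t$ guarantees, since each component of $\gamma(t)$ is either contained in $F_t$ or intersects $F_t$ $\w$-orthogonally by the definition of $\cC^0_\w$, and both properties survive the inflation; on the remaining three edges $\gamma'(x,y)$ is simply $\Sigma$ viewed inside $(X,\w'_0)$. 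The diffeomorphism $\phi_{(x,y)}$, which pulls $\w'_{(x,y)}$ back to $\w'_{(0,0)}$, then sends $\w'_{(x,y)}$-symplectic surfaces to $\w'_{(0,0)}$-symplectic surfaces and preserves orthogonality of intersections, placing the pulled-back configuration in $\cC^0_{\Sigma,\w'_{(0,0)}}$.

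Next I would note that the assignment depends smoothly on $(x,y) \in \partial R$ since both $\phi$ and $\gamma'$ do, and then verify that traversing $\partial R$ yields a closed loop based at $\Sigma$. On the three edges $\{y=0\}$, $\{x=0\}$, $\{x=1\}$ the Moser diffeomorphism is the identity by construction and $\gamma'(x,y) = \Sigma \subset (X,\w'_0)$, so the pulled-back divisor is constantly $\Sigma$. At the two corners $(0,1)$ and $(1,1)$ of the top edge, $\phi$ is again the identity, and $\gamma'(x,1) = \gamma'(x) = \Sigma$ because $\gamma(0) = \gamma(1) = \Sigma$. Hence all four corners map to the same basepoint $\Sigma$, and the boundary assignment defines an element of $\pi_1(\cC^0_{\Sigma,\w'_{(0,0)}})$.

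No serious obstacle is anticipated: the corollary is essentially tautological given the Moser trick. Its role is to package the nontrivial top-edge family $(\gamma'(t))_{t \in [0,1]}$, which a priori lives in the varying symplectic structures $\w'_t$, into a single loop in the one fixed space $\cC^0_{\Sigma,\hat\w}$ (recalling $\w'_{(0,0)} = \hat\w$ by the choice of $s_0$). This produces the element of $\pi_1(\cC^0_{\Sigma,\hat\w})$ to which Lemma \ref{l:small} will subsequently be applied, providing the comparison needed to transfer triviality of $v'_{\hat\w} \circ u_{\hat\w}$ to triviality of $v'_{\w} \circ u_{\w}$.
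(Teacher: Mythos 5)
Your proof is correct and takes essentially the same route as the paper: the corollary is presented there with no separate proof as an immediate consequence of the Moser lemma, and the surrounding text makes exactly the two points you make, namely that $\phi_{(x,y)}^*\w'_{(x,y)}=\w'_{(0,0)}$ transports the $\w'_{(x,y)}$-symplectic configuration $\gamma'(x,y)$ to an $\w'_{(0,0)}$-symplectic one, and that the normalization $\phi_{(x,0)}=\phi_{(0,y)}=\phi_{(1,y)}=\id$ together with $\gamma'(r)=\gamma'(0)=\Sigma$ on the three non-top edges pins the family to the constant $\Sigma$ there, so that the top edge closes up into a loop based at $\Sigma$.

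One small remark: you assert that the components of $\gamma'(x,y)$ intersect $\w'_{(x,y)}$-orthogonally, which is needed for the resulting loop to land in $\cC^0_{\Sigma,\hat\w}$ (rather than just in the space of symplectic configurations). The paper is equally terse about this, but it does require a word of justification for the single intersection point between the component in class $2H-E_1-\cdots-E_5$ and the component $H-E_1-E_6$ of $F_t$, which lies on the inflation locus: the inflation form can be taken compatible with $J_\gamma(r)$, so $J$-holomorphic transverse intersections of the components stay symplectically orthogonal for the inflated form. Away from a tubular neighbourhood of $F_t$ the inflated form is a scalar multiple of $\w$, so orthogonality there is automatic, as you note.
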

We define $\hat\gamma(r):=\phi_{r}^{-1}(\gamma'(r))$.
Note that $\hat\gamma(x,0)=\hat\gamma(0,y)=\hat\gamma(1,y)=\gamma'(0,0)$ for all $x,y \in [0,1]$ because $\phi_{(x,0)}=\phi_{(0,y)}=\phi_{(1,y)}=\id$.
Therefore, $\hat\gamma(x,1)$ for $x \in [0,1]$ is a loop of symplectic divisors as well (i.e. $\hat\gamma(0,1)=\hat\gamma(1,1)$).
We write $\hat\gamma$ as $\hat\gamma(t)$ when we restrict the family to $I = [0,1] \times \{1\} \subset \partial R$. 
Let $\hat\w:=\w'_{(0,0)}$, then Corollary \ref{c:boundaryLoop} implies that $\hat\gamma(t)$ represents an element in $\pi_1(\cC^0_{\Sigma,\hat\w})$; see Figure \ref{fig:Moser}.


We are now ready to analyze maps \eqref{eq:vu} and \eqref{eq:vu'}. Describe $v'_{\hat\w} \circ u_{\hat\w}(\hat\gamma)$ explicitly as follows.  Consider a parametrization of $\hat\gamma(t)$, that is, choose a nodal symplectic curve $C$ and a family of embeddings
 $g_t: C \to (X,\hat\w)$, such that 
\begin{equation}
    g_t(C)=\hat\gamma(t), \quad t \in [0,1],
\end{equation}
 
  where $g_t$ is smooth on every irreducible component of $C$.  Then $g_t$ forms a symplectic isotopy from $\hat\gamma(0)$ to itself.
  

By the Banyaga extension theorem, we can extend $g_t$ to a path of Hamiltonian diffeomorphisms $G_t:(X,\hat\w) \to (X,\hat\w)$ based at the identity.
By definition, $u_{\hat\w}(\hat\gamma)=[G_1] \in \pi_0(Stab(\Sigma,\hat\w))$.
Denote the irreducible component of $\hat\gamma(0)$ in the class $2H-E_1-\cdots-E_5$ by $\hat\gamma(0)_0$, and the corresponding irreducible component in $C$ as $C_0$.
Then, by definition, $v'_{\hat\w} \circ u_{\hat\w}(\hat\gamma)=[G_1|_{\hat\gamma(0)_0}]=[g_0^{-1}\circ g_1|_{C_0}] \in \pi_0(\Diff^+(S^2,n-1))$.
By Lemma \ref{l:small}, we know that $v'_{\hat\w} \circ u_{\hat\w}$ is trivial, hence so is $[g_0^{-1}\circ g_1|_{C_0}]$.

Now consider the symplectomorphisms $\phi_t \circ G_{t}: (X,\hat\w) \to (X,\w'_t)$ for $t \in I$.
By definition, 
\[
\phi_t \circ G_t(\hat\gamma(0))=\phi_t \circ g_t(C)=\phi_t(\hat\gamma(t))=\gamma'(t).
\]

As subsets, we have $\gamma'(t)=\gamma(t)=\phi_t\circ g_0(C)$ in $X$ (they only differ by thinking of them as living in $X$ with different symplectic forms); therefore, it makes sense to ask whether $\phi_t\circ g_0: C\to (X,\w)$ is a also symplectic isotopy.
Indeed, on every irreducible component of $C$ other than $C_0$, $\phi_t \circ g_t$ is a symplectic isotopy with respect to $(X,\w)$ because each of these other components either do not intersect $F_t$ or are contained in $F_t$ and we have $\w'_t=\frac{1}{1+s_0}\w$ outside a small neighborhood of $F_t$, and also when restricted on $F_t$.

On the other hand, $\phi_t \circ g_t|_{\hat{\gamma}(0)_0}$ is not necessarily a symplectic isotopy with respect to $(X,\w)$.
However, the family of symplectic forms $((\phi_t \circ g_t)^*\w)_{t \in [0,1]}$ on $C_0$ are cohomologous and independent of $t$ near the intersection points between $C_0$ and other components of $C$.
Therefore, by the Moser argument, we can find a family of diffeomorphisms $d_t: C_0 \to C_0$, starting from $d_0=\id$ and supported away from the intersection points with other components of $C$, such that $\phi_t \circ g_t \circ d_t:C_0 \to (X,\w)$ is a symplectic isotopy.
By Banyaga extension and restriction again, we know that 
$v'_{\w} \circ u_{\w}(\hat{\gamma})=[\phi_1 \circ g_1 \circ d_1|_{C_0}] \in \pi_0(\Diff^+(S^2,n-1))$.
Since $\phi_1=\id$ and $d_1$ is isotopic to $d_0=\id$ in the complement of the marked points,  $[\phi_1 \circ g_1 \circ d_1|_{C_0}]=[g_1|_{C_0}]$ is trivial.

Therefore, we showed that $v'_\w\circ u'_\w(\hat\gamma)=0$.  From Proposition \ref{p:bihopfian}, this concludes the proof of Theorem \ref{t:stab0ham} for $n\ge6$.

\subsection{Pure type $\mathbb D$ case for $n=5$}\label{sec:pure5}

 There are two cases left behind in the previous section.  When $\lambda\neq{1}/{3}$, i.e. pure type $\DD_4$, although the filling divisor is different, we draw the same conclusion by exactly the same inflation argument since Proposition \ref{p:stab0ham} already covers all cases when $\lambda\in\Q$.  When $\lambda={1}/{3}$, i.e. pure type $\DD_5$, this is a special case of Proposition~\ref{p:stab0ham}.

\section{$C^0$ small symplectomorphisms cannot braid}

In this section, we prove the following result.

\begin{theorem}\label{t:trivial_braid}
Let $(X,\w)$ be of type $\DD$ and $\Sigma \subset X$ a filling divisor. 
There exists $\epsilon>0$ such that if $d_{C^0}(f,\id)<\epsilon$, then there is a Hamiltonian diffeomorphism $\phi$ such that $\phi \circ f \in Stab^0(\Sigma)$.

\end{theorem}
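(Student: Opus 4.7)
The plan is to construct $\phi$ in two stages: a Hamiltonian $\phi_A$ that returns $f(\Sigma)$ to $\Sigma$ setwise, followed by a correction $\phi_B$ such that $\phi := \phi_B\phi_A$ satisfies $\phi\circ f|_\Sigma = \id_\Sigma$. The principal conceptual obstacle, foreshadowed by the section title, is that a careless choice of $\phi_A$ may braid the $n-1$ marked points on the horizontal component $C_0 = 2H-E_1-\cdots-E_5\subset\Sigma$, producing a nontrivial element of $\pi_0(\Diff^+(S^2,n-1))\cong PB_{n-1}(S^2)/(\Z/2)$ that no $\phi_B$ can undo.

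\textbf{Stage A.} Fix $J_0\in \cJ_\omega^{reg}$ making each component of $\Sigma$ holomorphic, and let $\pi_0:X\to S^2$ be the pseudoholomorphic ruling in class $H-E_1$ from Lemma~\ref{l:nefemb}; its reducible fibers lie over a set of critical values $\{b_2,\dots,b_n\}\subset S^2$. Set $J_1 := f_*J_0$, so $f(\Sigma)$ is $J_1$-holomorphic and the associated ruling is $\pi_1 = \pi_0\circ f^{-1}$. A crucial observation is that $\pi_1$ and $\pi_0$ share the \emph{same} critical values, since $\pi_1(f(E_i)) = \pi_0(E_i) = b_i$. For a generic path $\{J_t\}_{t\in[0,1]}$ in $\cJ_\omega^{reg}$ connecting $J_0$ and $J_1$, the unique $J_t$-holomorphic representatives of the classes appearing in $\Sigma$ assemble into a symplectic isotopy $\Sigma_t$ from $\Sigma_0 = \Sigma$ to $\Sigma_1 = f(\Sigma)$. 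Banyaga's extension theorem lifts $\Sigma_t$ to a Hamiltonian isotopy $\phi_t^A$ with $\phi_t^A(\Sigma)=\Sigma_t$; setting $\phi_A := (\phi_1^A)^{-1}$, we get $g := \phi_A\circ f\in Stab(\Sigma)$.

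\textbf{Stage B.} The mapping class $[g|_{C_0}]\in \pi_0(\Diff^+(S^2,n-1))$ is determined, via the homotopy exact sequence of \eqref{eq:DiffSeq}, by the loop of marked points traced in $\Conf_{n-1}(S^2)$ as the divisor moves from $\Sigma$ to $f(\Sigma)$ via $\Sigma_t$. Since $f$ is $C^0$-close to the identity and the base identification $C_0\cong S^2$ via $\pi_0|_{C_0}$ intertwines $f|_{C_0}$ with the identity on $S^2$ (precisely because $\pi_1\circ f = \pi_0$), it suffices to show that the marked base points $b_i^{(t)}$ — the critical values of $\pi_t$ — trace a nullhomotopic loop in $\Conf_{n-1}(S^2)$. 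The key point is that although $J_1$ is generally \emph{not} $C^0$-close to $J_0$, the rulings $\pi_0$ and $\pi_1$ \emph{are}, since their critical values coincide and Gromov compactness ensures $C^0$-continuity of pseudoholomorphic representatives of exceptional fibered classes. One can therefore choose $\{J_t\}$ so that each $b_i^{(t)}$ remains in a small disk $D_i\subset S^2$ around $b_i$, with the $D_i$ pairwise disjoint for $\epsilon$ small; the loop then lies in the contractible region $\prod_i D_i\subset \Conf_{n-1}(S^2)$, forcing $[g|_{C_0}]=0$. An analogous analysis controls the restrictions of $g$ to the other spherical components, after which a final Moser-plus-Weinstein extension supplies $\phi_B$ absorbing the identity-isotopic restrictions, yielding $\phi\circ f\in Stab^0(\Sigma)$.

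\textbf{Main obstacle.} The principal technical difficulty lies in the construction of the interpolating family $\{J_t\}$ in Stage~B: rather than interpolating almost complex structures directly (where no $C^0$-control is available, since $f_*J_0$ need not be $C^0$-close to $J_0$), one must lift a constrained path in the space of pseudoholomorphic rulings with disk-confined critical base values to a path of almost complex structures, and verify that the resulting Banyaga isotopy $\phi_t^A$ respects this base-level constraint. It is precisely this fibration-to-almost-complex-structure lifting, together with its compatibility with $\Sigma_t$, that allows $C^0$-closeness of $f$ to translate into triviality of the induced braid.
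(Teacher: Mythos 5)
Your proposal correctly identifies the central insight that drives the paper's argument — that although $f_*J_0$ is not $C^0$-close to $J_0$, the induced pseudoholomorphic rulings in the class $H-E_1$ \emph{are} $C^0$-close, so one can control the isotopy of the filling divisor through the ruling projection rather than through the almost complex structures. Your overall two-stage architecture (first return $f(\Sigma)$ to $\Sigma$ setwise, then show the restricted mapping class on the horizontal component is trivial and correct by a final Banyaga/Weinstein step) also matches the paper. However, there are several genuine gaps and misstatements that the paper's proof is careful to avoid.

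First, the assertion that ``$\pi_1$ and $\pi_0$ share the same critical values, since $\pi_1(f(E_i)) = \pi_0(E_i)$'' is imprecise. Once one identifies $\cM(F,J)$ with the section curve $D_{1,J}$ in the class $2H-E_1-\cdots-E_5$ (the natural identification used in the paper), the critical locus for $J_0$ is $\{q_i = D_i \cap D_{1,J_0}\}$ and for $f_*J_0$ it is $\{f(q_i)\}$ — these are $C^0$-close because $f$ is $C^0$-small, not equal. Invoking ``$\pi_1 \circ f = \pi_0$'' builds the identification out of $f$ itself, which is circular for the purposes of detecting whether $f$ braids the base points. Relatedly, the $C^0$-closeness of the rulings comes from conjugation by $f$, not from Gromov compactness.

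Second, and more seriously, the core of your Stage B — ``one can therefore choose $\{J_t\}$ so that each $b_i^{(t)}$ remains in a small disk $D_i \subset S^2$'' — is asserted but not constructed, and you yourself flag it as the ``principal technical difficulty.'' This is precisely where the paper's two-step interpolation does the work. In Step A (Construction~\ref{construction}), the path $J_t^f$ is required to equal $J_0^f$ \emph{outside} the fibered neighborhood $\wt U^f$ and to equal $J_0$ \emph{inside} the smaller fibered neighborhood $\wt V$ at $t=1$; this constraint, combined with positivity of intersection with the fiber class (Lemma~\ref{l:pos}), forces every singular fiber $D_{i,J_t^f}$ ($i>1$) to remain in $\wt U^f$ for all $t$ — this is the confinement you want, but it comes as a consequence of the support condition, not as an extra hypothesis you impose on a generic path. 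Step B then separately returns the horizontal component $D_1$ to $D_{1,J_0}$ while keeping the singular fibers fixed. A single generic path from $J_0$ to $f_*J_0$ has no reason to keep the critical values confined, and making it do so is not a perturbative adjustment.

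Third, your detection of triviality of $[g|_{C_0}]$ via a nullhomotopic loop in $\Conf_{n-1}(S^2)$ requires supplying an explicit isotopy from $\id$ to $g|_{C_0}$ in $\Diff^+(S^2)$ whose marked-point trajectory you control; the path $\Sigma_t$ from $\Sigma$ to $f(\Sigma)$ only moves $q_i$ to $f(q_i)$, so you would need an additional controlled isotopy closing up the loop. The paper circumvents this by the arc-based criterion of Lemma~\ref{l:curvecomplex}: it suffices to show that for each arc $\gamma$ between marked points on $D_1$, the $\pi_{J_0}$-projection of $\phi_H^1 \circ f \circ \gamma$ is homotopic to $\gamma$ in the arc space $\Gamma$. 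This is proved in Proposition~\ref{p:ABkey} and Lemma~4.11 by a case analysis on the arc: in the middle third (away from the singular fibers), $\phi_{H_A}^t$ does not act and one only needs $C^0$-smallness of $f$; near the ends, the projection is confined to the disk-like region $\wt W \cap D_1$, so any wiggling is absorbed. Crucially, this never requires $C^0$-control on the Banyaga extension itself — only its support condition — so the ``verify that the Banyaga isotopy respects the base-level constraint'' step you anticipate as a difficulty is sidestepped entirely.

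Finally, your proposal addresses only the pure type $\DD$, $n\ge 6$ case. The paper separately treats $n=5$ (via a symplectic cut along a level set near a Lagrangian $\RP^2$ to pass to a blow-up of a Hirzebruch surface, where the ruling argument can be run relative to $\sigma_\infty$) and the non-pure case (by first returning the extra exceptional curves and blowing down).
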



The proof of Theorem \ref{t:trivial_braid} is again divided into several cases, and the main case remains the pure type $\DD$ case for $n\ge6$  addressed in Section \ref{sec:mainproof1}, while the non-pure case and the case $n=5$ are considered in Section \ref{sec:mainproof2}.

Throughout this section, we consider a class of configurations of curves (and their corresponding homology classes) slightly different from the filling divisors we considered in previous sections.  

For any $J\in\cJ^\reg=\cJ^\reg_\omega$, we denote $D_{i,J}$ as the unique nodal curve with two components of homology classes $\{E_i, H-E_1-E_i\}$ for $i\ge2$, and $D_{1,J}$ as the $J$-holomorphic representative of class $2H-E_1-\cdots-E_5$ in this section.  We also denote

$$D_J:=\cup_{i\ge1}D_{i,J}.$$

\subsection{The case of pure type $\DD$, $n\ge6$}\label{sec:mainproof1}
\subsubsection{Pseudoholomorphic ruling fibration}


Let $F=H-E_1$ be the fiber class. 
For every $J \in \JJ^\reg$, let $\cM^{\circ}(F,J)$ be the moduli space of $J$-holomorphic spheres in the class $F$ modulo parametrization, and let $\cM(F,J)$ be its Gromov compactification.
Denote 
$$p_J:=\cM(F,J)\setminus \cM^{\circ}(F,J).$$ 

Observe that
$$p_J=\cup_i p_{i,J},$$

\noindent where $p_{i,J}$ represents $D_{i,J}$ in the moduli space for $i>1$.

 

Let
\[
\chi_J:X \to \cM(F,J)
\]
be the projection map that sends $x$ to the unique $F$-curve (possibly nodal) passing through it.  This gives a $J$-holomorphic fibration with fibre class equal to $F$.

Notice that the restriction 
\[\chi_J|_{D_{1,J}}: D_{1,J} \to \cM(F,J) \]
 is a homeomorphism, so we can identify $\cM(F,J)$ with $D_{1,J}$.  Under this identification, $p_{i,J}$  is identified with the intersections $$q_{i,J}:=D_{i,J}\cap D_{1,J}\quad q_J:=\cup_i q_{i,J}.$$  
 Combining the two maps above, we obtain the following projection.
\begin{equation}
    \pi_J: X\to D_{1,J},\quad D_{i,J}\mapsto q_{i,J}.
\end{equation}

\subsubsection{Pseudo-holomorphic isotopy}\label{ssec:StepAB}



Fix $J_0\in\JJ^\reg$ so that the corresponding $D_{J_0}$ has symplectically orthogonal intersections between different irreducible components.  For simplicity, we will denote 
$$D:=D_{J_0},\quad D_i:=D_{i,J_0}$$
$$ p_i:=p_{i,J_0},\quad q_i=q_{i,J_0}.$$

In this section, we will assume that $f$ is a symplectic diffeomorphism such that
\begin{equation}\label{e:epsilon}
   d_{C^0}(f,\id)<\epsilon,    
\end{equation}
and prove that $f(D)$ can be Hamiltonian isotopic $D$ in a controlled way in Corollary \ref{c:A-extension} and Lemma \ref{l:HB}.  The choice of $\epsilon$ will be specified below.



\subsubsection*{Step A: moving $f(D_i)$ back to $D_i$ for $i>1$}

Since $f$ is a symplectomorphism, $J_0^f:=f_*J_0 \in \JJ^\reg$.
In fact, since $f$ is in the Torelli part (as explained in the introduction), $f(D_i)$ is the unique  $f_*J_0$-holomorphic representative of $[D_i]$.

Let $p_{i,J^f_0}\in\cM(F,J^f_0)$ denote the points that represent $D_{i, J^f_0}$ for $i>1$.
Let $U_{i}^f \subset \cM(F,J^f_0)$ be an open neighborhood of $p_{i,J^f_0}$ such that 
\begin{enumerate}
    \item $D_i \subset \chi_{J^f_0}^{-1}(U_{i}^f)$, and 
    \item $U_{i}^f \cap U_{j}^f= \emptyset$ if $i \neq j$
\end{enumerate}

This is possible because $f$ is $C^0$-small.  We define
\begin{equation}
    \widetilde{U}_{i}^f:=\chi_{J^f_0}^{-1}(U_{i}^f),
\end{equation}


On top of these, we also need some open sets in $\cM(F,J_0)$.  For each $i>1$, choose $V_{i} \subset W_{i} \subset \cM(F,J_0)$ which are small open disk neighborhoods of $p_{i,J_0}$ such that 
\begin{enumerate}
   \item $\widetilde{V}_{i}:=\chi_{J_0}^{-1}(V_{i}) \subset \widetilde{U}_{i}^f \subset \chi_{J_0}^{-1}(W_{i})=:\widetilde{W}_{i}$, and 
    \item $W_{i} \cap W_{j}= \emptyset$ if $i \neq j$
\end{enumerate}

For the existence of these subsets, we have taken advantage of the fact that $f$ is $C^0$-small. More precisely, we should first choose $W_i$, then choose $\epsilon$ to be sufficiently small so that some $U_i^f\Subset\wt W_i$, and then choose $V_i$ so that $\wt V_i\subset \wt U_i^f$.
Denote $U^f:=\cup_i U_{i}^f$ and $\widetilde{U}^f:=\cup_i \widetilde{U}^f_{i}$, and similarly for $V$ and $W$. 



\begin{construction}\label{construction}
    Let $(J^f_t)_{t \in [0,1]}$ be a path in $\JJ^\reg$ such that
\begin{enumerate}
    \item $J^f_0=f_*J_0$,  
    \item for all $t \in [0,1]$, $J^f_t=J^f_0$ outside $\widetilde{U}^f$, 
    \item when $t=1$, $J^f_1=J_0$ inside $\widetilde{V}$
\end{enumerate}
\end{construction}

Such a path always exists since $\cJ\setminus\cJ^\reg$ is a union of submanifolds of codimension 2 or higher.  With this choice, the $J^f_t$-holomorphic foliation outside $\widetilde{U}^f$ is independent of $t$.
Moreover, at $t=1$, the $J^f_1$-holomorphic foliation inside $\widetilde{V}$ is the same as the $J_0$-holomorphic foliation inside $\widetilde{V}$.


\begin{lemma}\label{l:pos}
For $i>0$,    $D_{i,J^f_t}$ is contained in $\widetilde{U}^f$ for all $t$.
\end{lemma}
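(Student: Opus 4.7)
The plan is to establish Lemma~\ref{l:pos} (which we interpret as the statement for $i>1$, i.e.\ for the nodal singular-fiber components of $D$, since $D_{1,J^f_t}$ is a section of the ruling and manifestly cannot sit inside $\widetilde U^f$) by a continuity-plus-positivity argument on the set
\[
T := \{\, t\in [0,1] \ : \ D_{i,J^f_t}\subset \widetilde U^f \,\}.
\]
First, I would observe $0\in T$: indeed $D_{i,J^f_0}=f(D_i)$ is the unique $J^f_0$-holomorphic representative of $[D_i]$, and $U^f_i$ was chosen precisely so that $f(D_i)=\chi_{J^f_0}^{-1}(p_{i,J^f_0})\subset \widetilde U^f_i$. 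Openness of $T$ in $[0,1]$ follows from openness of $\widetilde U^f$ together with Hausdorff-continuity of $t\mapsto D_{i,J^f_t}$, which is a consequence of Gromov compactness combined with the uniqueness of the $J^f_t$-representative of this nodal class for $J^f_t\in\cJ^\reg$.

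The heart of the proof is closedness of $T$. Suppose $t_n\in T$ with $t_n\to t_\ast$. By continuity the limit satisfies $D_{i,J^f_{t_\ast}}\subset \overline{\widetilde U^f}$, and by connectedness of the nodal curve together with disjointness of the $\widetilde U^f_j$'s, it actually sits in a single closure $\overline{\widetilde U^f_i}$. To upgrade this to containment in the open set $\widetilde U^f_i$, I would run the following positivity-of-intersections argument. Pick any smooth $J^f_0$-fiber $F_0$ with $F_0\cap\widetilde U^f=\emptyset$; such $F_0$'s foliate $X\setminus \widetilde U^f$, since all singular $J^f_0$-fibers project to the nodal points $p_{j,J^f_0}\in U^f$ and $\widetilde U^f=\chi_{J^f_0}^{-1}(U^f)$. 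Since $J^f_t\equiv J^f_0$ outside $\widetilde U^f$ by Construction~\ref{construction}, any such $F_0$ is also $J^f_{t_\ast}$-holomorphic on the whole of its support. Now both $F_0$ and $D_{i,J^f_{t_\ast}}$ are $J^f_{t_\ast}$-holomorphic curves in the fiber class $H-E_1$, with $(H-E_1)^2=0$ and no common irreducible component (the components of $D_{i,J^f_{t_\ast}}$ have classes $E_i$ and $H-E_1-E_i$, neither equal to $H-E_1$). Positivity of intersections therefore forces $F_0\cap D_{i,J^f_{t_\ast}}=\emptyset$. Letting $F_0$ range over the foliation gives $D_{i,J^f_{t_\ast}}\cap(X\setminus\widetilde U^f)=\emptyset$, i.e.\ $t_\ast\in T$.

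Since $T$ is nonempty, open and closed in the connected interval $[0,1]$, it equals $[0,1]$, and connectedness of $D_{i,J^f_t}$ refines this to $D_{i,J^f_t}\subset \widetilde U^f_i$ for every $t$. The delicate point — and the place where I expect the main obstacle to lie — is ensuring that the positivity-of-intersections step is legitimately applicable: both $F_0$ and $D_{i,J^f_{t_\ast}}$ must be holomorphic with respect to one and the same almost complex structure at every potential intersection point. This is exactly what Construction~\ref{construction}(2) buys us, namely that $J^f_t$ is modified only \emph{inside} $\widetilde U^f$, so that any $J^f_0$-fiber living entirely outside $\widetilde U^f$ is automatically $J^f_{t_\ast}$-holomorphic in its entirety. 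Without this containment, one would only have tangent-plane matching on $\partial \widetilde U^f$, which is insufficient to invoke global positivity and close the argument.
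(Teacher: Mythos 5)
Your core idea is the same as the paper's: positivity of intersections between $D_{i,J^f_t}$ and the $J^f_0$-fibers living in $(\wt U^f)^c$, which remain $J^f_t$-holomorphic because Construction~\ref{construction}(2) ensures $J^f_t=J^f_0$ there. However, you embed this in an open-closed-connectedness argument on the parameter $t$ that is redundant. The positivity argument already yields a direct contradiction for \emph{any} single $t$, with no need for the hypothesis carried over from the $t_n$'s nor for the continuity/openness step: if some $x\in D_{i,J^f_t}$ lay in $(\wt U^f)^c$, the $J^f_0$-fiber through $x$ would be irreducible, contained entirely in $(\wt U^f)^c$ (that set is saturated for $\chi_{J^f_0}$), and still $J^f_t$-holomorphic, so it and $D_{i,J^f_t}$ would be two distinct $J^f_t$-holomorphic curves in class $F$ meeting at $x$, contradicting $F^2=0$. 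This is exactly the paper's one-paragraph proof; your openness step (Gromov compactness plus regularity of $\JJ^\reg$) and the reduction to a single closure $\overline{\wt U^f_i}$ (which would anyway require the \emph{closures}, not just the $U^f_j$'s, to be pairwise disjoint) add machinery that is not needed. Separately, you are right to read the lemma as applying to $i>1$: the stated range $i>0$ appears to be a typo, since $D_{1,J^f_t}$ is a section of the ruling in class $2H-E_1-\cdots-E_5$ with $[D_1]\cdot F=1\neq0$ and manifestly cannot be contained in $\wt U^f$; both your argument and the paper's hinge on $[D_{i,J^f_t}]=F$.
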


\begin{proof}
    Assume the contrary.  If $x\in D_{i,J^f_t}\cap (\wt U^f)^c$ for some $t$, then the $J^f_t$-curve in class $F$ passing through $x$ must be contained entirely in $(\wt U^f)^c$ from Construction \ref{construction} (2).  Therefore, this curve is irreducible and intersects positively with $D_{i,J^f_t}$, which contradicts the fact that $F^2=0$.
\end{proof}

\begin{corollary}\label{c:A-extension}
There is an ambient Hamiltonian isotopy $\phi_{H_A}^t:X \to X$ supported in $\widetilde{U}^f$, for $t \in [0,1]$, such that
\begin{enumerate}
\item $\phi_{H_A}^0=\id$, and
\item $\phi_{H_A}^t(D_{i,J^f_0})=D_{i,J^f_t}$ for all $i>0$ and for all $t \in [0,1]$
\end{enumerate}
\end{corollary}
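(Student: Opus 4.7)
The plan is to build, for each index $i>1$, a Hamiltonian isotopy compactly supported in the open set $\widetilde{U}_i^f$ that carries $D_{i,J^f_0}$ to $D_{i,J^f_t}$, and then combine them into a single global isotopy. Since the $\widetilde{U}_i^f$ are pairwise disjoint (by construction of $U_i^f$, $V_i$, $W_i$), the global problem decouples into a finite collection of independent local ones, and the combined isotopy will automatically be supported in $\widetilde{U}^f$.

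The first step is to upgrade Lemma \ref{l:pos} to a smooth family statement: for each $i>1$, the two components in classes $E_i$ and $H-E_1-E_i$ admit unique embedded $J^f_t$-holomorphic representatives which depend smoothly on $t$, by the standard regularity theory for the moduli space of $J$-holomorphic spheres on a rational surface, applied on the path $J^f_t \in \cJ^\reg$. Their unique transverse intersection point likewise varies smoothly in $t$, so $(D_{i,J^f_t})_{t \in [0,1]}$ is a smooth $1$-parameter family of nodal symplectic spheres, entirely contained in $\widetilde{U}_i^f$ by Lemma \ref{l:pos}. Fixing an abstract nodal model $C_i$, we choose a smooth family of parametrizations $g_t^i : C_i \to X$ with image $D_{i,J^f_t}$. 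After a small preliminary perturbation of $(J^f_t)$ inside $\cJ^\reg$ (preserving the properties of Construction \ref{construction}) making the two components $\omega$-orthogonal at their node for every $t$, one can linearize in a Darboux chart near the node so that the parametrizations become locally standard.

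Next, I would extend the family $g_t^i$ to a compactly supported symplectic isotopy $\psi_t^i$ of $X$ inside $\widetilde{U}_i^f$. Away from the node this is the parametric Banyaga/Weinstein extension for the smooth symplectic submanifold given by each component; near the node it is a direct extension in the Darboux chart, matched to the component extensions on overlap. Because the nodal spheres remain compactly inside $\widetilde{U}_i^f$ throughout the isotopy, the generating time-dependent vector field may be cut off by a bump function so as to be compactly supported in $\widetilde{U}_i^f$. The composition $\psi_t := \prod_{i>1} \psi_t^i$, with the order irrelevant since the supports are pairwise disjoint, is then a compactly supported symplectic isotopy of $X$ in $\widetilde{U}^f$ realizing $\psi_t(D_{i,J^f_0}) = D_{i,J^f_t}$ for all relevant $i$.

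Finally, because $X$ is a simply connected rational surface, $H^1(X;\mathbb{R}) = 0$, so the flux homomorphism on $\Symp_0(X,\omega)$ vanishes identically and every symplectic isotopy based at the identity is Hamiltonian. Hence $\psi_t$ is generated by a normalized time-dependent Hamiltonian $H_A$, which by standard cut-off can be chosen with support in $\widetilde{U}^f$; setting $\phi^t_{H_A} := \psi_t$ then gives the desired isotopy. The main technical obstacle I expect is the parametric extension of a $1$-parameter family of nodal symplectic spheres, which is essentially a parametric version of Gompf's isotopy lemma; the preliminary $\omega$-orthogonality adjustment at the node is what allows this to reduce cleanly to two matched applications of the standard symplectic isotopy extension on the smooth components.
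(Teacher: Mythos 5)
Your proof follows essentially the same route as the paper's: apply Banyaga's extension theorem to the family of nodal fibres $D_{i,J^f_t}$, with the support confined to $\widetilde{U}^f$ via Lemma~\ref{l:pos}; the disjointness of the $\widetilde{U}_{i}^f$, the smooth parametric dependence of the moduli of exceptional curves, and the triviality of flux from $H^1(X;\RR)=0$ are all correct elaborations of details the paper elides. Note that your argument, like the proof of Lemma~\ref{l:pos} itself, only covers $i>1$, whereas the statement reads ``$i>0$''; since $[D_1]\cdot F=1\neq 0$ the positivity-of-intersection argument does not apply to the section component $D_1$, and this appears to be a slip in the paper's wording rather than a gap on your end, as only the $i>1$ cases and the support bound are used downstream.
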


\begin{proof}
Apply Banyaga extension to the symplectic isotopy $D_{i,J_t^f}$, which is guaranteed to be supported inside $\wt U^f$ by Lemma \ref{l:pos}.
\end{proof}


\begin{figure}
    \centering
    \includegraphics[scale=2, width=0.8\linewidth]{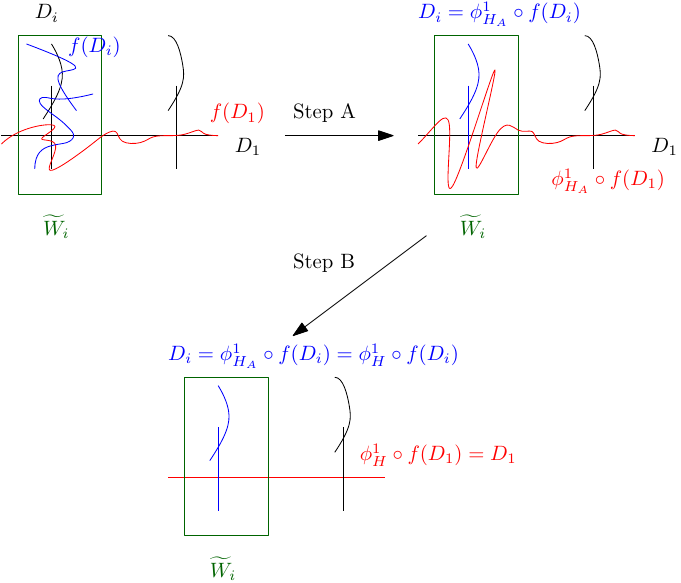}
    \caption{Isotopy of $D_i$ }
    \label{fig:Moser}
\end{figure}

\subsubsection{Step B: moving the entire configuration back to $D$}

Now define $J^A:=(\phi_{H_A}^1)_*J^f_0$.  Let $(J^B_t)_{t \in [0,1]}$ be a path in $\JJ^{\reg}$ such that
\begin{enumerate}
    \item when $t=0$, we have $J^B_0=J^A$, and 
    \item for all $t \in [0,1]$ and all $i>1$, $D_{i,J^B_0}=D_{i,J^A}$ is $J^B_t$-holomorphic (i.e. $D_{i,J^B_t}=D_{i,J^B_0}$), and
    \item when $t=1$, we have $J^B_1=J_0$.
\end{enumerate}
With this choice, we have a unique embedded $J^B_t$-holomorphic representative $D_{i,J^B_t}$ for all $t$ and all $i \ge 1$.  Moreover, $D_{i,J^B_t}=D_{i,J_0}$ for $i>1$ and for all $t$.  By perturbing the family $D_{1,J^B_t}$ to $D_{1,J_t^B}'$, we can obtain an $\omega$-orthogonal family 
\[
D_{J_t^B}':=D_{1,t}' \cup \bigcup_{i>1} D_{i,J^B_t}.
\]



Since $D_{1,J_0^B}=D_{1,J^A}$ is orthogonal to $D_{i,J^A}$, the perturbation can be chosen trivially when $t=0$, i.e., $D'_{1,J_{0}^B}=D_{1,J_0^B}$.
The following lemma is again a direct consequence of Banyaga's extension theorem applied to $D_{J^B_t}'$.

\begin{lemma}\label{l:HB}
There is an ambient Hamiltonian isotopy $\phi_{H_B}^t:X \to X$ for $t \in [0,1]$, such that
\begin{enumerate}
\item $\phi_{H_B}^0=\id$, and
\item $\phi_{H_B}^t(D_{J_0^B}')=D_{J_t^B}'$ for all $t \in [0,1]$
\end{enumerate}
\end{lemma}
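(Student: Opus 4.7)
My plan is to mirror the proof of Corollary \ref{c:A-extension}, which is itself a direct application of Banyaga's symplectic extension theorem, and then use the simple-connectedness of $X$ to upgrade symplectic to Hamiltonian.

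First I would verify that $\{D_{J_t^B}'\}_{t \in [0,1]}$ is a smooth one-parameter family of $\omega$-orthogonal symplectic nodal configurations, all of the same topological type as $D$. The components $D_{i,J_t^B}$ for $i > 1$ are constant in $t$ by construction, so only the component $D_{1,t}'$ actually moves; it is obtained from the unique $J_t^B$-holomorphic representative of $[D_1]$ (which varies smoothly in $t$ by the implicit function theorem for regular $J$-holomorphic curves) by a small perturbation vanishing at $t=0$ that makes its intersections with each $D_{i,J_0^B}$ $\omega$-orthogonal. The perturbation can be chosen to depend smoothly on $t$.

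Next I would apply the parametric version of Banyaga's extension theorem to produce an ambient smooth isotopy $\psi_t: X \to X$ with $\psi_0 = \id$ and $\psi_t(D_{J_0^B}') = D_{J_t^B}'$, such that $\psi_t$ is a symplectomorphism for each $t$. The one technical point is extending the symplectic vector field generating the motion of the configuration across the nodes; this is handled precisely by the $\omega$-orthogonality of the intersections, via standard Darboux models around each node together with Gompf's isotopy lemma (cf.\ \cite[Lemma 5.3]{Ev11}), in exactly the same way as in the proof of Corollary \ref{c:A-extension}.

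Finally, since $X$ is a rational surface we have $H^1(X,\RR) = 0$, so $\Symp_0(X,\omega) = \Ham(X,\omega)$ and the symplectic isotopy $\{\psi_t\}$ is automatically a Hamiltonian isotopy; setting $\phi_{H_B}^t := \psi_t$ completes the proof. I do not anticipate any serious obstacle, since this lemma is structurally identical to Corollary \ref{c:A-extension}, differing only in that the moving component of the configuration is $D_{1,t}'$ rather than the components $D_{i,J_t^f}$ for $i > 1$; in particular, no analogue of Lemma \ref{l:pos} is needed here because we do not require the isotopy to be supported in any prescribed open set.
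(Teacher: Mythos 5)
Your proposal is correct and takes essentially the same route as the paper, which simply observes that the lemma follows directly from Banyaga's extension theorem applied to the family $D_{J_t^B}'$. The extra details you supply (smoothness of the family, handling the nodes via $\omega$-orthogonality, and invoking $H^1(X,\RR)=0$ to pass from symplectic to Hamiltonian) are all valid and merely fill in what the paper leaves implicit.
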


Note that $D_{J_1^B}'=D_{J_0}=D$, so we have moved the whole configuration $D_{J_0^f}$ back to $D$.

\begin{figure}
    \centering
    \includegraphics[scale=0.85]{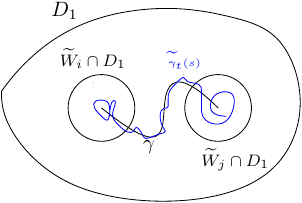}
    \caption{Recognizing the mapping class of $\gamma$-projection}
    \label{fig:Moser}
\end{figure}

\subsubsection{Trivial mapping class}\label{ssec:trivialMC}

Let $\phi_H^t$ be the concatenation of $\phi_{H_A}^{t}$ and $ \phi_{H_B}^{t}$ ($\phi_{H_A}^{t}$ goes first).
The goal of this section is to show that
\[
\phi_H^1 \circ f|_{D_1}: (D_1, q_{J_0}) \to (D_1, q_{J_0})
\]
represents the trivial mapping class in $\pi_0(\Diff(D_1, q_{J_0}))$.

\begin{lemma}\label{l:bij}
    For any $t \in [0,1]$ and $i$, we have the following equality
    \begin{equation}
        \pi_{J_0} \circ \phi_{H_B}^t \circ \phi_{H_A}^1 \circ f(q_{i})=q_{i}.
    \end{equation} 

\end{lemma}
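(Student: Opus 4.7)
The plan is to reduce the lemma to a setwise preservation statement: once we show that $\phi_{H_B}^t \circ \phi_{H_A}^1 \circ f$ preserves each $D_i$ (for $i \ge 2$) as a subset of $X$, the conclusion is immediate, since $\pi_{J_0}$ collapses the entire (possibly nodal) fiber $D_i$ to the single point $q_i$ and $q_i \in D_i$.

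First I would show $\phi_{H_A}^1 \circ f(D_i) = D_i$ for each $i \ge 2$. Since $f$ acts trivially on homology, $f(D_i)$ is the unique embedded $f_*J_0 = J_0^f$-holomorphic representative of $[D_i]$, so $f(D_i) = D_{i,J_0^f}$, and by Corollary \ref{c:A-extension}, $\phi_{H_A}^1(D_{i,J_0^f}) = D_{i,J_1^f}$. The \emph{key observation} is the containment $D_i \subset \wt V$: since $D_i$ is the $\chi_{J_0}$-fiber over $p_i \in V_i$, one has $D_i \subset \chi_{J_0}^{-1}(V_i) = \wt V_i \subset \wt V$. By Construction \ref{construction}(3), $J_1^f$ agrees with $J_0$ on $\wt V$, so $D_i$ is $J_1^f$-holomorphic globally. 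Uniqueness of the embedded $J_1^f$-representative of the exceptional classes $E_i$ and $H-E_1-E_i$ (whose union is $D_i$) then forces $D_{i,J_1^f} = D_i$.

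Next I would show $\phi_{H_B}^t(D_i) = D_i$ for all $t$ and all $i \ge 2$. From the previous step, $D_{i,J^A} = \phi_{H_A}^1(D_{i,J_0^f}) = D_i$; since $J_0^B = J^A$, the defining condition (2) of the family $(J_t^B)$ forces $D_{i,J_t^B} = D_{i,J_0^B} = D_i$ for all $t$. Lemma \ref{l:HB}, applied componentwise, then yields $\phi_{H_B}^t(D_i) = D_i$ setwise.

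Combining the two steps, $\phi_{H_B}^t \circ \phi_{H_A}^1 \circ f(q_i) \in D_i$, and since $\pi_{J_0}$ maps the entire fiber $D_i$ to the single point $q_i$, the claimed equality follows. The only genuinely nontrivial input is the containment $D_i \subset \wt V$, which is exactly what the careful choice of the neighborhood $V_i$ of $p_i$ in $\cM(F,J_0)$ was designed to guarantee; everything else is a bookkeeping exercise tracking pseudoholomorphic representatives through the Step A and Step B constructions.
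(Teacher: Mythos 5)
Your proposal is correct and takes essentially the same route as the paper's proof: reduce to the setwise claim that $\phi_{H_B}^t\circ\phi_{H_A}^1\circ f(D_i)=D_i$ for $i>1$, then conclude via $\pi_{J_0}(D_i)=q_i$. The paper states the key fact that $\phi_{H_A}^1$ carries $D_{i,J_0^f}$ back to $D_i=D_{i,J_0}$ without elaboration, whereas you correctly identify and spell out why it holds — $D_i=\chi_{J_0}^{-1}(p_{i,J_0})\subset\wt V_i$, so Construction \ref{construction}(3) makes $D_i$ a $J_1^f$-holomorphic configuration, and uniqueness of embedded representatives of the exceptional classes $E_i$ and $H-E_1-E_i$ forces $D_{i,J_1^f}=D_i$.
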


\begin{proof}
   This follows from the definition of $\phi_{H_A}^1$, which sends $D_{i,J_0^f}$ to $D_{i}=D_{i,J_0}$ for $i>1$. For any $i>1$, since $\phi_{H_B}^t \circ \phi_{H_A}^1 \circ f(q_i)=D_{1,J_t^B}\cap D_{i,J_t^B}=D_{1,J_t^B}\cap D_{i}$, the $\pi_{J_0}$-images of it is $q_i$.
\end{proof}

Let $\Gamma$ be the space of continuous paths $\gamma:[0,1]  \to D_1$ such that $\gamma(0), \gamma(1) \in q_{J_0}$ and $\gamma(s) \notin q_{J_0}$ for $s \in (0,1)$.  We have the following elementary fact, for which the proof is omitted.

\begin{lemma}\label{l:curvecomplex}
Suppose that for any $\gamma \in \Gamma$,
$\phi_H^1 \circ f \circ \gamma$ is homotopic to $\gamma$ within the space $\Gamma$. Then
$\phi_H^1 \circ f|_{D_1} \in \Diff(D_1,q_{J_0})$ is in the trivial mapping class.
\end{lemma}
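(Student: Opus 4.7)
The plan is to invoke a standard fact about mapping class groups of punctured spheres: a class in $\pi_0(\Diff(D_1, q_{J_0}))$ is trivial if and only if it acts trivially on the isotopy classes of the arcs of some ideal triangulation. Write $g := \phi_H^1 \circ f|_{D_1}$, which by construction lies in $\Diff(D_1, q_{J_0})$.

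First I would fix an ideal triangulation $T = \{\gamma_1, \ldots, \gamma_m\} \subset \Gamma$ of $D_1 \cong S^2$ with vertex set $q_{J_0}$, that is, a maximal collection of pairwise disjoint elements of $\Gamma$ whose complement is a disjoint union of open triangular disks. Such a $T$ exists because $|q_{J_0}| = n - 1 \geq 5$ under our standing assumption $n \geq 6$; by Euler's formula it has $m = 3(n-1) - 6$ arcs and $2(n-1) - 4$ triangles. The hypothesis of the lemma asserts that $g \circ \gamma_i$ is homotopic to $\gamma_i$ within $\Gamma$ for every $i$.

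Next, I would inductively isotope $g$ inside $\Diff(D_1, q_{J_0})$ so that it eventually fixes each $\gamma_i$ pointwise. A homotopy of $g \circ \gamma_1$ to $\gamma_1$ in $\Gamma$ is precisely an isotopy of embedded arcs relative to endpoints with interiors avoiding $q_{J_0}$; the arc isotopy extension theorem in $D_1$ with marked points $q_{J_0}$ lifts this to an ambient isotopy $h_t$ in $\Diff(D_1, q_{J_0})$ with $h_1 \circ g(\gamma_1) = \gamma_1$, and a further isotopy along $\gamma_1$ rel endpoints arranges $g|_{\gamma_1} = \id$. Inductively, having arranged $g$ to fix $\gamma_1, \ldots, \gamma_{k-1}$ pointwise, I would cut $D_1$ along their union to obtain a disjoint collection of polygonal disks, on each of which $\gamma_k$ descends to a properly embedded arc; the hypothesis combined with isotopy extension inside the cut surface (keeping boundary pointwise fixed) furnishes an isotopy of $g$ fixing $\gamma_k$ without disturbing the arcs that were previously fixed.

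Once $g$ fixes every arc of $T$ pointwise, each closed triangle in the complement of $T$ is mapped by $g$ onto itself, since $g$ is orientation-preserving and fixes all vertices. The restriction of $g$ to each such triangle is then a diffeomorphism of a closed disk that is the identity on the boundary, hence isotopic to the identity rel boundary by the Alexander trick. Gluing these disk isotopies yields the desired isotopy from $g$ to the identity in $\Diff(D_1, q_{J_0})$. I expect the main technical point to be ensuring that the isotopy in step $k$ is supported away from the already-fixed arcs $\gamma_1, \ldots, \gamma_{k-1}$; this is exactly what the cutting procedure plus the classical isotopy extension theorem on a surface with boundary accomplishes.
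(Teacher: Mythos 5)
The paper declares this an ``elementary fact, for which the proof is omitted,'' so there is no internal proof to compare against; what you have written is the standard \emph{Alexander method} argument and it is the right tool for the job. The strategy (fix an ideal triangulation, inductively straighten $g$ along the arcs, finish with the Alexander trick on the complementary triangles) is essentially the proof of Proposition~2.8 in Farb--Margalit's \emph{Primer}, specialized to a marked sphere, and your bookkeeping (genus zero, $|q_{J_0}|=n-1\ge 5$, Euler count of arcs and triangles, $g$ orientation-preserving since it is the restriction of a symplectomorphism to a symplectic curve) is all correct.

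Two places deserve more care than you give them. First, elements of $\Gamma$ are merely continuous paths, so the hypothesis hands you a \emph{homotopy} of $g\circ\gamma_i$ to $\gamma_i$ rel endpoints in $D_1\setminus q_{J_0}$, not an isotopy of embedded arcs; you need to insert the standard fact (Epstein's theorem / the bigon criterion for arcs) that two embedded arcs in a surface that are homotopic rel endpoints are ambiently isotopic, before invoking isotopy extension. Second, and more importantly, your inductive step silently upgrades the hypothesis. After $g$ has been arranged to fix $\gamma_1,\dots,\gamma_{k-1}$ pointwise, what you know is that $g(\gamma_k)$ is homotopic to $\gamma_k$ in $D_1\setminus q_{J_0}$; what you \emph{use} is that this homotopy may be taken inside the cut surface $D_1$ cut along $\gamma_1\cup\dots\cup\gamma_{k-1}$, so that the ambient isotopy fixing $\gamma_k$ can be chosen to be supported away from the previously fixed arcs. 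That upgrade is true, but it is a genuine step: it follows because a homotopically essential arc system in a punctured surface yields a $\pi_1$-injective complement, so that two paths contained in the complement and homotopic rel endpoints in the ambient punctured surface are already homotopic rel endpoints in the complement. You should state this and justify it (or cite it, e.g., as the ``change of coordinates'' step / the fact that $\mathrm{Mod}(S_{0,k})$ acts faithfully on isotopy classes of arcs). With those two points made explicit, the argument is complete.
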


Lemma \ref{l:curvecomplex} and the following corollary make sense because of Lemma \ref{l:bij}, which implies that $\phi_H^1 \circ f \circ \gamma$ and $\pi_{J_0} \circ \phi_{H_A}^1 \circ f \circ \gamma$ are elements in $\Gamma$.

\begin{corollary}\label{c:homotopy_equal}
    Suppose that for any $\gamma \in \Gamma$,
$\pi_{J_0} \circ \phi_{H_A}^1 \circ f \circ \gamma$ is homotopic to $\gamma$ within the space $\Gamma$. Then
$\phi_H^1 \circ f \in \Symp_h(X)$ is in the trivial mapping class.
\end{corollary}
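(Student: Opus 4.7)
The plan is to reduce Corollary~\ref{c:homotopy_equal} to Lemma~\ref{l:curvecomplex} by constructing, for each $\gamma \in \Gamma$, a homotopy within $\Gamma$ between $\pi_{J_0} \circ \phi_{H_A}^1 \circ f \circ \gamma$ and $\phi_H^1 \circ f \circ \gamma$, and then translating the ensuing triviality of $[\phi_H^1 \circ f|_{D_1}]$ into triviality of $[\phi_H^1 \circ f]$ in $\pi_0(\Symp_h(X))$ via the fibration diagram together with Theorem~\ref{t:stab0ham}.

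Given $\gamma \in \Gamma$, I would set
\[
\sigma_t(s) \;:=\; \pi_{J_0} \circ \phi_{H_B}^t \circ \phi_{H_A}^1 \circ f \circ \gamma(s), \qquad t,\,s \in [0,1].
\]
At $t=0$, this is $\pi_{J_0} \circ \phi_{H_A}^1 \circ f \circ \gamma$. Unpacking the construction in Section~\ref{ssec:StepAB}, the map $\phi_H^1 \circ f = \phi_{H_B}^1 \circ \phi_{H_A}^1 \circ f$ carries $D_1$ to $D_{1, J_1^B}' = D_{1, J_0} = D_1$; since $D_1$ is a section of the $J_0$-fibration, $\pi_{J_0}|_{D_1} = \id_{D_1}$, so $\sigma_1 = \phi_H^1 \circ f \circ \gamma$. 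The key verification is that $\sigma_t \in \Gamma$ for every $t$. The endpoint condition is immediate from Lemma~\ref{l:bij}. For the interior condition, $\sigma_t(s) = q_i$ iff $\phi_{H_B}^t \circ \phi_{H_A}^1 \circ f \circ \gamma(s) \in D_{i, J_0}$. Because the family $J_t^B$ is chosen with $D_{i, J_t^B} = D_{i, J_0}$ for every $t$ and every $i > 1$, the isotopy $\phi_{H_B}^t$ preserves each such $D_{i, J_0}$ setwise; consequently the condition above is $t$-independent and reduces to $\phi_{H_A}^1 \circ f \circ \gamma(s) \in D_{i, J_0}$, i.e.\ $\sigma_0(s) = q_i$. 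By the hypothesis of the Corollary, $\sigma_0 \in \Gamma$, so this fails for $s \in (0,1)$, and hence $\sigma_t \in \Gamma$ as well.

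Concatenating this homotopy with the Corollary's hypothesis $\sigma_0 \sim \gamma$ in $\Gamma$ gives $\phi_H^1 \circ f \circ \gamma \sim \gamma$ in $\Gamma$ for every $\gamma \in \Gamma$. Lemma~\ref{l:curvecomplex} then yields $[\phi_H^1 \circ f|_{D_1}] = 0$ in $\pi_0(\Diff(D_1, q_{J_0}))$, which is exactly $v'([\phi_H^1 \circ f]) = 0$ under the bottom row of the fibration diagram. By exactness, $[\phi_H^1 \circ f] \in \im(u')$; and by Theorem~\ref{t:stab0ham} the composition $v \circ u'$ is the zero map (every element of $Stab^0(\Sigma)$ is isotopic to the identity in $\Symp_h(X)$), so $v([\phi_H^1 \circ f]) = 0$, which is the desired conclusion.

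The main obstacle is the interior condition in the verification that $\sigma_t \in \Gamma$: were $\phi_{H_B}^t$ free to move the nodal fibers $D_{i, J_0}$, the point $\phi_{H_B}^t \circ \phi_{H_A}^1 \circ f \circ \gamma(s)$ could stray onto such a fiber partway through the isotopy and project to a marked point of $D_1$, ruining the homotopy. The feasibility of the whole argument therefore hinges on the delicate design of Step B in Section~\ref{ssec:StepAB}, which pins the nodal components $D_{i, J_t^B}$ for $i > 1$ and only moves the ``section'' $D_{1, J_t^B}'$, so that $\phi_{H_B}^t$ acts fiberwise-compatibly with $\pi_{J_0}$ at the level of marked points.
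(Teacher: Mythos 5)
Your proof is correct and follows the same route as the paper's: the homotopy $\sigma_t = \pi_{J_0}\circ\phi_{H_B}^t\circ\phi_{H_A}^1\circ f\circ\gamma$ is exactly the one the authors use (sketched in one line), and the final passage from $[\phi_H^1\circ f|_{D_1}]=0$ to triviality in $\pi_0(\Symp_h(X))$ via Theorem~\ref{t:stab0ham} is what the paper leaves implicit. Your spelled-out exact-sequence version (exactness of the bottom row gives $[\phi_H^1\circ f]\in\operatorname{im}(u')$, and $v\circ u'=0$ by Theorem~\ref{t:stab0ham}) is a faithful unpacking of "the conclusion follows from Theorem~\ref{t:stab0ham}."
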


\begin{proof}
   

   From Lemma \ref{l:bij}, $$\pi_{J_0}\circ\phi^1_{H_A}\circ f\circ\gamma\sim \pi_{J_0}\circ\phi_{H_B}\circ\phi_{H_A}\circ f\circ\gamma\sim \phi_{H}^1\circ f\circ \gamma$$
   as paths with fixed endpoints in $\Gamma$.  From Lemma \ref{l:curvecomplex}, the mapping class induced on $D_1$ is trivial, so the conclusion follows from Theorem \ref{t:stab0ham}.
\end{proof}


Lastly, we verify the assumption of Lemma \ref{l:curvecomplex}.  Recall the open neighborhood $W$ we defined earlier.
Take $\gamma \in \Gamma$, we may assume without loss of generality (by possibly isotoping $\gamma$) that $\gamma(s) \in \wt V$ if and only if $s \in [0,1/3) \cup (2/3,1]$.
Since $\widetilde{U} \subset \widetilde{W}$
and $\phi_{H_A}^t$ is supported in $\widetilde{U}$, we have the following assertions from the construction

\begin{proposition}\label{p:ABkey}
    \begin{enumerate}
    \item $\pi_{J_0}(\phi_{H_A}^t(f(\gamma(s)))) \subset \wt W\cap D_1$ for all $t \in [0,1]$ and $s \in [0,1/3) \cup (2/3,1]$, and 
    \item $\pi_{J_0}(\phi_{H_A}^t(f(\gamma(s))))=\pi_{J_0}(f(\gamma(s)))$ for all $t \in [0,1]$ and $s \in [1/3,2/3]$, and
    \item for $s=0,1$, we have $\pi_{J_0}(\phi_{H_A}^1(f(\gamma(s)))) =\gamma(s)\in p_{J_0}$.
\end{enumerate}

\end{proposition}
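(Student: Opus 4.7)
The overall strategy is to track each $f(\gamma(s))$ through the isotopy $\phi_{H_A}^t$ using three structural inputs: (a) the $C^0$-smallness of $f$, (b) the support containment $\supp\phi_{H_A}^t\subset\widetilde{U}^f\Subset\widetilde{W}$ combined with $\widetilde{V}\subset\widetilde{U}^f\subset\widetilde{W}$, and (c) the fact that for $i>1$ the entire nodal fibre $D_i$ collapses to the single point $q_i$ under $\pi_{J_0}$.

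First I would establish the set-wise invariance $\phi_{H_A}^t(\widetilde{W})\subset\widetilde{W}$. Points outside $\widetilde{U}^f$ are fixed by $\phi_{H_A}^t$ and hence automatically stay in $\widetilde{W}\supset\widetilde{U}^f$. For a point of $\widetilde{U}_i^f$, the flow trajectory is continuous and cannot leave $\supp\phi_{H_A}^t\subset\widetilde{U}^f=\bigsqcup_i\widetilde{U}_i^f$ (otherwise it would pass through a fixed point of the time-dependent Hamiltonian vector field, and uniqueness of ODE solutions forces the trajectory to be constant, contradicting $\phi_{H_A}^0=\id$); by connectedness of the trajectory it stays inside the single component $\widetilde{U}_i^f\subset\widetilde{W}_i$. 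With this in hand, claim (1) is immediate: for $s\in[0,1/3)\cup(2/3,1]$ we have $\gamma(s)\in\widetilde{V}\Subset\widetilde{W}$, so choosing $\epsilon$ small (depending only on the fixed pair $\widetilde{V},\widetilde{W}$) gives $f(\gamma(s))\in\widetilde{W}$, and invariance gives $\phi_{H_A}^t(f(\gamma(s)))\in\widetilde{W}$; projecting by $\pi_{J_0}$ lands in $\widetilde{W}\cap D_1=W$.

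For claim (2), the aim is to show that $\phi_{H_A}^t$ fixes $f(\gamma(s))$ outright, after which the equality becomes tautological. I would refine the isotopy of $\gamma$ (already allowed to run in $D_1$) so that, in addition to $\gamma(s)\in\widetilde{V}$ iff $s\in[0,1/3)\cup(2/3,1]$, one has $\gamma(s)\in D_1\setminus\widetilde{W}$ throughout a strict subinterval of $[1/3,2/3]$; the tiny transition windows where $\gamma$ passes through $\widetilde{W}\setminus\widetilde{V}$ are covered by claim (1) or can be absorbed by relabelling the partition. Since $\widetilde{U}^f\Subset\widetilde{W}$, the $C^0$-smallness of $f$ then forces $f(\gamma(s))\notin\widetilde{U}^f$ in the middle interval, so $\phi_{H_A}^t$ acts as the identity there.

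For claim (3), at $s=0,1$ we have $\gamma(s)=q_i\in D_i\cap D_1$ for some $i>1$. Since $f$ is a symplectomorphism, $f(D_i)$ is $J_0^f$-holomorphic in the homology class $[D_i]$; by uniqueness of the embedded representative of this exceptional nodal class, $f(D_i)=D_{i,J_0^f}$. Corollary~\ref{c:A-extension} then gives $\phi_{H_A}^1(f(q_i))\in\phi_{H_A}^1(D_{i,J_0^f})=D_i$, and because $\pi_{J_0}$ crushes the whole nodal fibre $D_i$ to its single point $q_i\in D_1$, we obtain $\pi_{J_0}(\phi_{H_A}^1(f(\gamma(s))))=q_i=\gamma(s)\in q_{J_0}=p_{J_0}$. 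The main technical point I expect is claim (2): since $\widetilde{U}^f$ depends on $f$, the quantifier order matters, and one must either fix $f$ first and then isotope $\gamma$, or replace $\widetilde{U}^f$ by a fixed slightly larger neighbourhood valid for all sufficiently $C^0$-small $f$; either route is clean once the order of choices is arranged with care.
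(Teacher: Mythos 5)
Your treatment of items (1) and (3) is essentially what the construction gives and is correct. For (1), the observation that $\phi_{H_A}^t$ preserves $\wt W$ (because its generating vector field is supported in $\wt U^f\subset\wt W$, and trajectories starting in the support stay in it while those starting outside are stationary) is the right mechanism, and combined with $C^0$-smallness of $f$ and $\wt V\Subset\wt W$ it gives the claim. For (3), the key point is $\phi_{H_A}^1(f(D_i))=D_i$ for $i>1$ (which follows from Corollary~\ref{c:A-extension} and uniqueness of the embedded $J_0^f$-holomorphic representative of $[D_i]$) together with the fact that $\pi_{J_0}$ collapses $D_i$ to $q_i$; this is exactly what the paper intends.

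Your argument for item (2) has a gap at the transition region, and the patch you propose does not close it. The partition in force is ``$\gamma(s)\in\wt V$ iff $s\in[0,1/3)\cup(2/3,1]$,'' so for $s\in[1/3,2/3]$ one only knows $\gamma(s)\notin\wt V$. Since $\wt V\subset\wt U^f$ is a one-sided containment, $\gamma(s)\notin\wt V$ does \emph{not} imply $\gamma(s)\notin\wt U^f$, and a fortiori does not imply that the $\epsilon$-ball around $\gamma(s)$ misses $\wt U^f$. Your proposal to arrange $\gamma(s)\in D_1\setminus\wt W$ on a strict subinterval and dismiss the two transition windows (where $\gamma(s)\in\wt W\setminus\wt V$) as ``covered by claim~(1) or by relabelling'' does not work: those windows lie inside $[1/3,2/3]$ (by the iff-condition), so claim~(1) does not address them, and relabelling the partition would only move the problem, since claim~(1) is pinned to the locus $\gamma(s)\in\wt V$. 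In the transition windows $f(\gamma(s))$ may lie in $\wt U^f$, where $\phi_{H_A}^t$ genuinely moves points and nothing forces it to preserve $J_0$-fibres, so the asserted \emph{equality} of $\pi_{J_0}$-images is not obvious there.

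To be fair, the paper states Proposition~\ref{p:ABkey} without a written proof, and the same transition-zone subtlety is left implicit there. What is actually invoked downstream is not the pointwise equality but a $C^0$-estimate: in the transition windows $\phi_{H_A}^t(f(\gamma(s)))$ stays inside the single component $\wt W_i$ by the invariance argument you already established, so its $\pi_{J_0}$-image lies in the small disk $W_i\subset D_1$ along with $\gamma(s)$, and both are therefore close to $q_i$. That recovers the estimate (i) in the lemma that follows (with a constant controlled by the size of $W_i$ and $U_i^f$, both of which shrink with $\epsilon$ by the stated order of choices), and is the statement one should really aim for in (2); proving the literal equality on all of $[1/3,2/3]$ requires an extra arrangement (e.g.\ that the $\epsilon$-neighbourhood of $D_1\setminus\wt V$ within $D_1$ misses $\wt U^f$) which is in tension with $\wt V\subset\wt U^f$ and should be made explicit rather than waved away.
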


\begin{lemma}
The path $\pi_{J_0}(\phi^1_H(f(\gamma(s))))$ is in the same isotopy class as $\gamma(s)$ in $\Gamma$.
\end{lemma}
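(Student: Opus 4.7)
The plan is to show that the natural homotopy $G(\tau,s):=\pi_{J_0}(\phi_H^\tau(f(\gamma(s))))$, $\tau\in[0,1]$, continuously connecting $\pi_{J_0}(f\circ\gamma)$ at $\tau=0$ to the target path $\pi_{J_0}\circ\phi_H^1\circ f\circ\gamma$ at $\tau=1$, can be modified into a homotopy in $\Gamma$ from $\gamma$ to the target. The construction has two ingredients: endpoint correction and interior avoidance.

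For endpoint correction, write $\gamma(0)=q_i$. Lemma~\ref{l:bij} gives $G(1,0)=q_i$, and for intermediate $\tau$, Corollary~\ref{c:A-extension} together with the nesting $\widetilde{U}^f_i\subset\widetilde{W}_i$ forces $\phi_{H_A}^\tau(f(q_i))\in D_{1,J_\tau^f}\cap D_{i,J_\tau^f}\subset\widetilde{W}_i$, so $G(\tau,0)\in W_i\cap D_1$ throughout. Choose a deformation retraction of the topological disk $W_i\cap D_1$ onto $q_i$ (and similarly near $q_j$) and use it to modify $G$ for $s\in[0,s_0]\cup[1-s_0,1]$; the result is a homotopy $\Phi$ with $\Phi(\tau,0)\equiv q_i$ and $\Phi(\tau,1)\equiv q_j$. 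Since the correction occurs inside disks meeting $q_{J_0}$ only at $q_i$ respectively $q_j$, $\Phi(1,\cdot)$ and $G(1,\cdot)$ lie in the same $\Gamma$-class, and similarly $\Phi(0,\cdot)$ and $G(0,\cdot)=\pi_{J_0}(f\circ\gamma)$. Finally $\pi_{J_0}(f\circ\gamma)$ is $C^0$-close to $\gamma$ (using $\pi_{J_0}|_{D_1}=\id$ and $C^0$-smallness of $f$), so a short geodesic interpolation in a fixed Riemannian metric on $D_1\cong S^2$ joins them in $\Gamma$.

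The core step is interior avoidance, i.e.\ $G(\tau,s)\notin q_{J_0}$ for $\tau\in[0,1]$ and $s\in(0,1)$. For $s\in[1/3,2/3]$, Proposition~\ref{p:ABkey}(2) makes $\pi_{J_0}\circ\phi_{H_A}^t\circ f\circ\gamma$ constant in $t$ and equal to $\pi_{J_0}(f(\gamma(s)))$, which is $C^0$-close to $\gamma(s)$ and thus bounded away from $q_{J_0}$ for small $\epsilon$; an analogous statement in the $\phi_{H_B}$-phase follows from the parallel construction, using that the divisors $D_i$ for $i>1$ are stationary under the $J^B_\tau$-family so that $\phi_{H_B}$ can be chosen with support disjoint from the projection-preimages of points far from $D_1$. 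For $s\in(0,1/3)\cup(2/3,1)$ with $\gamma(s)\in V_i$, Proposition~\ref{p:ABkey}(1) and its $\phi_{H_B}$-analogue confine $G(\tau,s)$ to $W_i\cap D_1$, which meets $q_{J_0}$ only at $q_i$. Hitting $q_i$ would require $\phi_H^\tau(f(\gamma(s)))\in D_i$; but $\phi_H^\tau(f(D_1))\cap D_i$ is a single point within $O(\epsilon)$ of $q_i$, so this coincidence is confined to $s$ within $O(\epsilon)$ of $0$ or $1$. After the radial endpoint correction, $\Phi(\tau,s)$ is pushed toward $q_i$ for small $s$, and the locus where $\Phi(\tau,s)=q_i$ is either confined to $s=0$ (for an appropriate choice of retraction) or forms a codimension-$2$ set in the $2$-parameter family that can be eliminated by a generic perturbation of $\Phi$ keeping the endpoints fixed.

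The main anticipated obstacle is the quantitative calibration: one must first fix the nested neighborhoods $\widetilde{V}_i\subset\widetilde{U}_i^f\subset\widetilde{W}_i$, then the thresholds $\delta,s_0$, and finally $\epsilon$ small enough that every $C^0$-perturbation produced by $\phi_H$ respects these bounds throughout both phases, especially when extending the $\phi_{H_A}$-analysis of Proposition~\ref{p:ABkey} to the $\phi_{H_B}$-phase. This relies on openness of $\cJ^{\reg}$, uniform continuity of $\pi_{J_0}$ away from $q_{J_0}$, and stability of the unique pseudoholomorphic representatives of exceptional classes under small perturbations of $J$.
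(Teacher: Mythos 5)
There is a genuine gap. Your plan is to upgrade the two-parameter family $G(\tau,s)=\pi_{J_0}(\phi_H^\tau(f(\gamma(s))))$ into a homotopy in $\Gamma$, which requires showing $G(\tau,s)\notin q_{J_0}$ for $s\in(0,1)$ and \emph{all} intermediate $\tau$. Your justification rests on the assertion that $\phi_H^\tau(f(D_1))\cap D_i$ is a single point within $O(\epsilon)$ of $q_i$. This is only established at $\tau=1$: there, Construction~\ref{construction}(3) arranges $J^f_1=J_0$ on $\wt V\supset D_i$, so that $\phi_{H_A}^1(f(D_1))=D_{1,J^f_1}$ and $D_i=D_{i,J^f_1}$ are simultaneously $J^f_1$-holomorphic, and positivity of intersections applies. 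For intermediate $\tau$, $D_{1,J^f_\tau}$ is $J^f_\tau$-holomorphic while $D_i$ is $J_0$-holomorphic, and no single almost complex structure makes both pseudoholomorphic at once, so positivity of intersections gives you nothing: the geometric intersection $D_{1,J^f_\tau}\cap D_i$ could a priori have several points, or tangencies, and is not automatically localized near $q_i$. Your fallback --- a generic perturbation of $\Phi$ keeping the endpoints fixed --- is plausible in spirit but not established; it must still produce, for each $\tau$, a path in $\Gamma$ with the correct endpoints, and it discards the structural information your estimates rely on.

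The paper sidesteps this entirely by \emph{not} asserting that $\wt\gamma_t$ is a $\Gamma$-homotopy. It records only: (i) $C^0$-proximity to $\gamma$ on $s\in[1/3,2/3]$ for all $t$, (ii) confinement to the disks $\wt W\cap D_1$ on the outer thirds, and (iii) avoidance of $q_{J_0}$ \emph{only} at $t=1$. From these it builds a fresh isotopy in $\Gamma$ from $\gamma$ to $\wt\gamma_1$ piecewise (interpolate in an $\epsilon$-neighborhood on the middle third, where $\gamma$ is bounded away from $q_{J_0}$; interpolate inside the once-punctured disks on the outer thirds), never passing through the problematic intermediate $\tau$. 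The $\phi_{H_B}$-phase is then clean: $\phi_{H_B}^t$ preserves each $D_i$ for $i>1$, so $\phi_{H_B}^t(x)\in D_i\Leftrightarrow x\in D_i$, giving interior avoidance directly. Your stated reasoning there (about $\phi_{H_B}$ having support ``disjoint from the projection-preimages of points far from $D_1$'') is not quite the right mechanism, though the conclusion you want is correct.
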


\begin{proof}
From items (1), (2), and (3) above, $\wt\gamma_t(s):=\pi_{J_0}\circ\phi_{H_A}^t\circ f(\gamma(s))$ has the following feature:

\begin{enumerate}[(i)]
    \item $d_{C^0}(\wt\gamma_t(s),\gamma(s))\le C\cdot d_{C^0}(f,\id)$ for $s\in[\frac{1}{3},\frac{2}{3}]$, where $C$ is a constant which depends only on $J_0$,
    \item $\wt\gamma_t(s)\subset \wt W\cap D_1$ for $s\in [0,\frac{1}{3}]\cup[\frac{2}{3}, 1]$.  
    \item $\wt\gamma_1(s)\notin p_{J_0}$ for $s\neq0,1$.
\end{enumerate}

Here, (i) follows from the fact that $\phi_{H_A}^t$ is supported inside $\wt U^f$ and (ii) follows from the fact that $f$ is $C^0$-small. To see (iii), note that  $\phi_{H_A}^1(f(D_i))=D_i$ for $i>1$, so the positivity of intersection demands $\phi_{H_A}^1(f(\gamma))\subset \phi_{H_A}^1(f(D_1))$ intersects $D_i$ at a unique point.
Note that $\wt W\cap D_1$ is homeomorphic to the union of $k$ disjoint smooth disks.  

Therefore, both $\gamma(s)$ and $\wt\gamma_1(s)$ are curves connecting $q_i$ and $q_j$ for some $i, j\in \{1,\cdots, k\}$.  When $\epsilon$ is chosen small enough in \eqref{e:epsilon}, we may first apply an isotopy for $\wt\gamma_t(s)|_{s\in[\frac{1}{3},\frac{2}{3}]}$ in an $\epsilon$-neighborhood, then the rest of $\wt\gamma$ inside the disk-like open subset $\wt W\cap D_1$.  Therefore, they are in the same connected components in $\Gamma$.  Note that $\wt\gamma_1(s)$ is not necessarily a simple curve.

On the other hand, by definition, $\wt\gamma_1(s)$ is isotopic to $\pi_{J_0}\circ\phi_{H_B}^t\circ\phi_{H_A}^1\circ f(\gamma(s))$ within $\Gamma$.  The fact that no interior point of this family of curves passes through $p_{J_0}$ again follows from the fact that $\phi_{H_B}^t$ preserves $D_i$ for $i>1$.  Taking $t=1$ concludes our lemma.

\end{proof}

By Corollary \ref{c:homotopy_equal}, it follows that $\phi^1_H \circ f|_{D_1}$ is in the trivial mapping class. Therefore, we can find a Hamiltonian isotopy $\phi'$ such that $\phi'\circ\phi^1_H\circ f\in\Stab^0(\Sigma)$. This concludes Theorem \ref{t:trivial_braid} for pure type $\DD$ forms when $n\ge6$. 

\subsection{Conclusion of Theorem \ref{t:trivial_braid} and \ref{thm:main}}\label{sec:mainproof2}

Two cases remain to be addressed: when $n=5$ and when $\w$ is not pure.  We will first consider the case when $n=5$.

\subsubsection{$n=5$}\label{ssec:k=51}

Due to the extra component of $E_1$-curve, we need a new structure for $\CP^2\#5\ov\CP^2$.  From \cite[Lemma 3.24]{LLWsmall} (and \cite{Ev11} in the monotone case), one can find an embedded Lagrangian $\RP^2$ in the complement of a filling divisor $\Sigma$, whose mod-2 homology class is the same as that of $H$. 

We will not repeat the whole proof, but it should be convenient for the reader to recall the sketch of this construction (see also the proof of \cite[Lemma 3.18]{LLWsmall}).  Assuming $\w(H)=1$, by a diffeomorphism induced by Cremona transforms, one may always assume $\w(E_1)<\frac{1}{2}$ and $\sum_{i=1}^5\w(E_i)<2$.  Start with the $SO(3)$-action on $T^*\RP^2$ with a moment map $\mu$.  The symplectic cut along the level set $||\mu||=1$ gives a standard symplectic $\CP^2$ whose line class has symplectic area $1$, and the cut locus is the standard quadric.  Moreover, $||\mu||$ generates the cogeodesic flow in the complement of $\RP^2$, hence induces a circle action on $\CP^2\setminus \RP^2$.  Then one can perform five blow-ups equivariant with respect to this circle action on the standard quadric \cite{Ka99}.  Therefore, the total transform of the standard quadric is a filling divisor $\Sigma_0$.  From the connectedness of the space of filling divisors, one may construct a Hamiltonian diffeomorphism $\psi$ such that $\psi(\Sigma_0)=\Sigma$.  Therefore, the $\psi$-image of the standard $\RP^2$ is what we need.  We denote $\sigma_i$ as the exceptional divisors coming from the equivariant blow-ups for $i=1,\cdots,5$, and $\sigma_0$ as the proper transform of the standard quadric.

Now we perform a further symplectic cut along a level set $||\mu||=\delta>0$ near the Lagrangian $\RP^2$.  Here $\delta$ should be taken small enough so that the level set does not intersect any exceptional curves of class $E_i$.  The resulting symplectic manifold, denoted by $X^*$, is a five-fold blow-up of the fourth Hirzebruch surface, i.e., $X^*:=\PP(\cO\oplus\cO(4))\#5\ov\CP^2$.
It means that $H_2(X^*)$ is generated by 7 classes, $B^*, F^*, E_1^*,\cdots, E_5^*$, where $B^*$ is the class of the zero section of $\cO(4)$ (so $B^* \cdot B^*=4$), $F^*$ is the fiber class and $E_i^*$ are the exceptional classes of the five blow-ups.  

In $X^*$ we now have seven distinguished symplectic curves: $\sigma_i^*$, $i=1,\cdots, 5$, which are the exceptional divisors of the blow-ups of classes $E_i^*$, respectively; $\sigma_0^*$, which is the proper transform of the standard quadric, whose class is $B^*-\sum_{i=1}^5 E_i^*$; and $\sigma_\infty$, which is the cut locus of $||\mu||=\delta$ of class $B^*-4F^*$.  

With the above understood, assume that we have $f\in \Symp_h(X)$ satisfying $d_{C^0}(f,\id)\ll\delta$.  Then $f(\RP^2)\subset ||\mu||^{-1}([0,\delta))$ and $f(\sigma_i)\subset ||\mu||^{-1}((\delta,+\infty))$.  Performing a symplectic cut along $||\mu||=\delta$ as above, we obtain two sets of divisors $\sigma_i^*$ and $f(\sigma_i)^*$ coming from $\sigma_i$ and $f(\sigma_i)$ for $i=0,\cdots, 5$ (note that in general, $f$ does not descend to $X^*$).  Take $J_0^*$ in $\JJ(X^*)^\reg$ such that $\sigma_i^*$ is $J_0^*$-holomorphic for $i=0,\cdots,5,\infty$.  One can also find a compatible almost complex structure, denoted by $J_f^*$, such that  $f(\sigma_i)^*$ are $J_f^*$-holomorphic for $i=0,\cdots,5$  and $J_f^*=J_0^*$ in a neighborhood of $\sigma_\infty^*$. We emphasized that $J_f^*$ is not induced by $f$. It is just a compatible almost complex structure which makes $f(\sigma_i)^*$ pseudo-holomorphic and equals to $J_0^*$ near $\sigma_\infty^*$.

For $J\in\JJ(X^*)^\reg$, we consider the $J$-holomorphic fibration with fiber class $F^*$.  Note that the regularity of $J$ here means both $B^*-\sum_{i=1}^5E_i^*$ and $E_1^*$ are represented by irreducible curves.  Then the fibration has five singular fibers containing a curve in $E_i^*$ class each.  

We may now run the argument in Section \ref{sec:mainproof1} relative to $\sigma_{\infty}$.  For Step A, we define $(\wt U^f)^*$, $\wt W^*$, and $\wt V^*$ as the union of a neighborhood of $\cup_{i=1}^5 D_i^*$ foliated by $F^*$-curves, where $D_i^*$ are $J_0^*$-stable curves with two components of classes $E_i^*$ and $F^*-E_i^*$ for $i=1,\cdots, 5$.  Choose a family of regular $J_t^f$ supported in $(\wt U^f)^*$ connecting $J_f^*$ to $J_0^*$  such that it is independent of $t$ near $\sigma_\infty^*$, this gives a symplectic isotopy from $f(\sigma_i^*)$ to $\sigma_i^*$ for $i=1,\cdots,5$, a symplectic isotopy from $f(\sigma_0^*)$ to the unqiue pseudo-holomorphic curve in the class $B^*$,  and a constant isotopy from $\sigma_\infty^*$ to itself (because the almost complex structure is chosen to be fixed near $\sigma_\infty^*$).
Extend these symplectic isotopies to a Hamiltonian isotopy $\phi^t_{H^*_A}$ supported inside $\wt W^*$ and preserves $\sigma_\infty$.  Step B is also carried over by a family of Hamiltonian diffeomorphisms $\phi^t_{H_B^*}$ supported away from $\sigma_\infty$.  Moreover, arguments in \ref{ssec:trivialMC} shows that $\phi_*:=\phi^1_{H_B^*}\circ \phi^1_{H_A^*}$ induces trivial mapping class in $\sigma_0$.  

Note that although $\phi_*$ is generated by a family of Hamiltonians whose support possibly contains $\sigma_\infty$, all such Hamiltonians preserve $\sigma_\infty$ as a symplectic curve, and the trace of isotopy of $\sigma_i^*$ is disjoint from $\sigma_\infty$ for $i=0,1,\cdots,5$.  Therefore, one may cut off $\phi_*$ so that it is supported away from $\sigma_\infty$ and have the same restriction on $\sigma_i^*$ for $i=0,1,\cdots,5$

Therefore, $\phi_*$ induces a Hamiltonian diffeomorphism $\phi$ in $X$ such that $\phi\circ f|_{\sigma_i}=\id_{\sigma_i}$ for $i=0,\cdots,5$.  This proves $f$ has a trivial symplectic mapping class by Theorem \ref{t:stab0ham}.

\subsubsection{Non-pure cases}

Assume $(X,\w)$ is a non-pure type $\DD$ positive rational surface.
Given a $C^0$ small symplectomorphism $f$ that is close to the identity, we can first isotope the extra exceptional curves back in a neighborhood of the exceptional curves.
 This allows us to descend the symplectomorphism $f$ to a symplectomorphism $f'$ on the blown-down pure type $\DD$ rational surface, and $f'$ is $C^0$ small away from the blow-down symplectic balls.
Since the blow-down symplectic balls are disjoint from $D$, we can isotope $f'(D)$ back to $D$ as in the previous section. It implies that $f'$ is a Hamiltonian diffeomorphism.
By Lemma 4.3 of \cite{LLW22}, it implies that $f$ is a Hamiltonian diffeomorphism as well.

\begin{proof}[Proof of Theorem \ref{thm:main}]
    Let $f_n$ be a sequence of Hamiltonian diffeomorphisms which $C^0$ converges to a symplectomorphism $f_{\infty}$. Then $f_nf_{\infty}^{-1}$ is a sequence of symplectomorphism converging to the identity.
    Therefore, $f_nf_{\infty}^{-1}$ is a Hamiltonian diffeomorphism when $n$ is large, which implies that $f_{\infty}$ is a Hamiltonian diffeomorphism.

    The complement $\Symp(X,\omega)\setminus \Ham(X,\omega)$ can be proved to be closed in the $C^0$-topology in the same way.
\end{proof}


\section{Further discussion}
\label{sec:further}
\subsection{Floer theoretic approach}
\label{sec:Floer}
\begin{question}
    Can Floer theory help attack Question \ref{quest:C0-closure}?
\end{question}

Given a symplectic manifold $(X,\omega)$, if we know that any non-trivial mapping class of $X$ acts non-trivially on some Floer theoretic invariants (e.g. Hamiltonian/Lagrangian Floer cohomology, Fukaya category) that are $C^0$-rubust, then it should give an affirmative answer to Question \ref{quest:C0-closure}.

Jannaud has established results which assert that iterations of Lagrangian Dehn twists do not belong to the $C^0$ closure of $\Symp_0(M)$ with mild assumptions using Floer theory \cite{Jannaud}.  One would naturally expect that the same result can be proved for compositions of arbitrary Lagrangian Dehn twists, provided that the resulting composition is nontrivial.  In particular, this will extend Theorem \ref{thm:main} to all positive rational surfaces in light of \cite[Theorem 1.6]{LLW22}.

However, it is an open question whether every non-trivial mapping class of a positive symplectic rational surface acts non-trivially on some Floer theoretic invariants even for type $\DD$, which explains the advantage of our current approach.  However, as an additional illustration of the Floer theoretic approach, we may consider the four dimensional $A_n$- Milnor fibre.  Let 
$$W:=\{(x,y,z)\,|\,x^2+y^2+z^{n+1}=1\}\subset(\CC^3,\omega_{std})$$ 
be endowed with the restricted K\"{a}hler form $\omega$. It is well-known that $W$ is symplectically equivalent to the plumbing of $n$ copies of $T^*S^2$. The zero sections of the plumbed copies are referred to as \textit{standard spheres}. Similarly to the proof of Theorem \ref{thm:main},  to show that $\Ham_c(W,\omega)$ is a connected component, in the $C^0$-topology, of $\Symp_c(W,\omega)$, it suffices to show that if a homotopy class in $\pi_0(\Symp_c(W,\omega))$ has representatives that are arbitrarily $C^0$-close to the identity, then it must be the trivial class. 

Suppose, to reach a contradiction, that $\alpha\in\pi_0(\Symp_c(W,\omega))$ is a non-trivial homotopy class that admits arbitrarily $C^0$-small representatives. It follows from \cite{W14} that $\alpha$ is represented by a composition of Dehn twists along the standard spheres; denote it by $\xi$. Moreover, by \cite[Theorem 1.3, Proposition 3.11]{KS02}, we can find Lagrangian standard spheres $L,L'$ such that 
\begin{equation}
\label{eq:KS_ineq}
HF(L,L')\neq HF(\xi(L),L').
\end{equation}
In fact, by Hamiltonian invariance, this inequality is true for any representative of $\alpha$.

Now, suppose that $\psi$, with $[\psi]=\alpha$, is $C^0$ small enough so that $\psi(L)$ belongs to a Weinstein neighborhood $\mathcal{W}(L)$ of the standard sphere $L$. Observe that $\psi(L)$ is a Lagrangian sphere in $\mathcal{W}(L)\cong D^*S^2$ and, therefore, Hamiltonian isotopic to $L$ by \cite{Hind12}; this in contradiction to \eqref{eq:KS_ineq}.

\subsection{$C^0$-small pseudo-holomorphic isotopy}\label{sec:C0isotopy}

Suppose that $\Sigma$ is a filling divisor of $(M,\omega)$ and $f_n$ are as in Theorem \ref{t:trivial_braid}.
If we were able to find a $C^0$-small Hamiltonian diffeomorphism $\phi_n$ such that $\phi_n \circ f_n(\Sigma)=\Sigma$, then it will be clear that $\phi_n \circ f_n \in Stab^0(\Sigma)$.
The novelty of Theorem \ref{t:trivial_braid} is that even though we don't know that $\phi_n$ is $C^0$-small, we are still able to show that $\phi_n \circ f_n \in Stab^0(\Sigma)$.
But in general, we can ask:

\begin{question}\label{q:C0smallIsotopy}
    Given a symplectic divisor $\Sigma$ and a $C^0$-small symplectomorphism $f$, can one find a $C^0$ small Hamiltonian isotopy $\phi^t$ such that $\phi^1\circ f(\Sigma)=\Sigma$?
\end{question}

Even though we have pseudo-holomorphic isotopy techniques in dimension $4$, the almost complex structures $J$ and $f_*J$ are not $C^0$-close. As a result, it is difficult to find a path of almost complex structures from $J$ to $f_*J$ such that the induced isotopy is controllable.

In the proof of Theorem \ref{t:trivial_braid}, we control the $\pi_{J_0}$ image of the isotopy to obtain our result.

\subsection{Other examples}

Our approach heavily rely on the fact that we know the symplectic mapping class group of the symplectic manifold, for example, the bihopfian property (Lemma \ref{l:bihopfian}) is crucial to the proof of Proposition \ref{p:bihopfian} and hence Theorem \ref{t:stab0ham}. It would be interesting to see examples that we can answer Question \ref{quest:C0-closure} while we don't know the symplectic mapping class group.
Type $\EE$ symplectic rational surfaces could be the next class of examples to test our understanding.

\bibliographystyle{amsalpha}
\bibliography{biblio}

\providecommand{\bysame}{\leavevmode\hbox to3em{\hrulefill}\thinspace}
\providecommand{\MR}{\relax\ifhmode\unskip\space\fi MR }
\providecommand{\MRhref}[2]{%
  \href{http://www.ams.org/mathscinet-getitem?mr=#1}{#2}
}
\providecommand{\href}[2]{#2}
\begin{thebibliography}{LLW22b}

\bibitem[Abr98]{Abr98}
Miguel Abreu, \emph{Topology of symplectomorphism groups of {$S^2\times S^2$}.}, Inventiones Mathematicae \textbf{131} (1998), 1--23.

\bibitem[ACLS25]{ACLS}
M.S. Atallah, J-P. Chassé, R.~Leclercq, and E.~Shelukhin, \emph{Weinstein exactness of nearby {L}agrangians: towards the {L}agrangian ${C}^{0}$ flux conjecture}, 2025, arXiv:2410.04158.

\bibitem[AM00]{AM00}
Miguel Abreu and Dusa McDuff, \emph{Topology of symplectomorphism groups of rational ruled surfaces}, J. Amer. Math. Soc. \textbf{13} (2000), no.~4, 971--1009 (electronic).

\bibitem[Anj02]{Anj02}
S{\'{\i}}lvia Anjos, \emph{Homotopy type of symplectomorphism groups of {$S^2\times S^2$},}, Geom. Topol (2002), 195--218.

\bibitem[AP13]{AP2013}
S{\'e}rgio Anjos and Martin Pinsonnault, \emph{The homotopy lie algebra of symplectomorphism groups of 3-fold blow-ups of the projective plane}, Mathematische Zeitschrift \textbf{275} (2013), no.~1-2, 245--292.

\bibitem[AS]{ME-WIP}
M.S. Atallah and E.~Shelukhin, \emph{{E}xact {L}agrangian, {H}amiltonian fixed points, and the flux conjectures}, Work in progress.

\bibitem[Ban78]{Ba78}
Augustin Banyaga, \emph{Sur la structure du groupe des difféomorphismes qui préservent une forme symplectique}, Commentarii Mathematici Helvetici \textbf{53} (1978), 174--227.

\bibitem[BHS18]{BHS18}
Lev Buhovsky, Vincent Humilière, and Sobhan Seyfaddini, \emph{A {$C^0$} counterexample to the {A}rnold conjecture}, Inventiones Mathematicae \textbf{213} (2018), no.~2, 759--809.

\bibitem[BHS21]{BHS21}
\bysame, \emph{The action spectrum and {$C^0$} symplectic topology}, Mathematische Annalen \textbf{380} (2021), no.~1, 293--316.

\bibitem[BL23]{BL2023}
Olguta Buse and Jun Li, \emph{Symplectic isotopy on non-minimal ruled surfaces}, Mathematische Zeitschrift \textbf{304} (2023), 44.

\bibitem[BO16]{BO16}
Lev Buhovsky and Emmanuel Opshtein, \emph{Some quantitative results in {$C^0$} symplectic geometry}, Inventiones Mathematicae \textbf{205} (2016), no.~1, 1--56.

\bibitem[Buh15]{LBflux}
Lev Buhovsky, \emph{Towards the {$C^0$} flux conjecture}, Algebraic \& Geometric Topology \textbf{14} (2015), no.~6, 3493--3508.

\bibitem[Bus11]{Buse11}
Olguta Buse, \emph{Negative inflation and stability in symplectomorphism groups of ruled surfaces}, Journal of Symplectic Geometry \textbf{9} (2011).

\bibitem[CGHS24]{C-GHS}
Daniel Cristofaro-Gardiner, Vincent Humilière, and Sobhan Seyfaddini, \emph{Proof of the simplicity conjecture}, Annals of Mathematics \textbf{199} (2024), no.~1, 181--257.

\bibitem[EK71]{EKHomeo}
Robert~D Edwards and Robion~C Kirby, \emph{Deformations of spaces of imbeddings}, Annals of Mathematics \textbf{93} (1971), no.~1, 63--88.

\bibitem[Eva11]{Ev11}
Jonathan~David Evans, \emph{Symplectic mapping class groups of some {S}tein and rational surfaces}, Journal of Symplectic Geometry \textbf{9} (2011), no.~1, 45--82.

\bibitem[Fat80]{Fathi}
Albert Fathi, \emph{Structure of the group of homeomorphisms preserving a good measure on a compact manifold}, Annales Scientifiques de l'École Normale Supérieure \textbf{13} (1980), no.~1, 45--93.

\bibitem[Gro85]{Gromov85}
Mikhail Gromov, \emph{Pseudo holomorphic curves in symplectic manifolds}, Inventiones Mathematicae \textbf{82} (1985), no.~2, 307--347.

\bibitem[Hin12]{Hind12}
Richard Hind, \emph{{L}agrangian unknottedness in {S}tein surfaces}, Asian Journal of Mathematics \textbf{16} (2012), no.~1, 1--36.

\bibitem[HLS15]{HLS15}
Vincent Humilière, Rémi Leclercq, and Sobhan Seyfaddini, \emph{Coisotropic rigidity and {$C^0$}-symplectic geometry}, Duke Mathematical Journal \textbf{164} (2015), no.~4, 767--799.

\bibitem[HLS16]{HLS16}
\bysame, \emph{Reduction of symplectic homeomorphisms}, Annales Scientifiques de l'École Normale Supérieure \textbf{49} (2016), 633--668.

\bibitem[Jan21]{Jannaud}
Alexandre Jannaud, \emph{{Dehn-Seidel} twist, {$C^0$} symplectic topology and barcodes}, 2021, arXiv:2101.07878.

\bibitem[Kar99]{Ka99}
Yael Karshon, \emph{Periodic {H}amiltonian flows on four-dimensional manifolds}, Memoirs of the American Mathematical Society \textbf{141} (1999), no.~672, viii+71.

\bibitem[KS02]{KS02}
Mikhail Khovanov and Paul Seidel, \emph{Quivers, {F}loer cohomology, and braid group actions}, Journal of the American Mathematical Society \textbf{15} (2002), no.~1, 203--271.

\bibitem[LLW15]{LLWnle4}
Jun Li, Tian-Jun Li, and Weiwei Wu, \emph{The symplectic mapping class group of {$\mathbb{CP}^2\#n\overline{\mathbb{CP}}^2$} with {$n \le 4$}}, Michigan Mathematical Journal \textbf{64} (2015), no.~2, 319--333.

\bibitem[LLW22a]{LLWsmall}
\bysame, \emph{Symplectic {$(-2)$}-spheres and the symplectomorphism group of small rational 4-manifolds {II}}, Transactions of the American Mathematical Society \textbf{375} (2022), no.~2, 1357--1410.

\bibitem[LLW22b]{LLW22}
\bysame, \emph{Symplectic {T}orelli groups of rational surfaces}, 2022, arXiv:2212.01873.

\bibitem[LMP95]{LMP1}
François Lalonde, Dusa McDuff, and Leonid Polterovich, \emph{On the flux conjectures}, Geometry, Topology, and Dynamics, vol.~15, CRM Proceedings, 1995, pp.~69--85.

\bibitem[LP04]{LP04}
Francois Lalonde and Martin Pinsonnault, \emph{The topology of the space of symplectic balls in rational 4-manifolds.}, Duke Mathematical Journal \textbf{122} (2004), no.~2, 347--397.

\bibitem[LRSV21]{RSV21}
Frédéric Le~Roux, Sobhan Seyfaddini, and Claude Viterbo, \emph{Barcodes and area-preserving homeomorphisms}, Geometry \& Topology \textbf{25} (2021), no.~6, 2713--2825.

\bibitem[LU06]{LU06}
Tian-Jun Li and Michael Usher, \emph{Symplectic forms and surfaces of negative square}, Journal of Symplectic Geometry \textbf{4} (2006), no.~1, 71--91.

\bibitem[LW12]{LW12}
Tian-Jun Li and Weiwei Wu, \emph{Lagrangian spheres, symplectic surfaces and the symplectic mapping class group}, Geometry and Topology \textbf{16} (2012), no.~2, 1121--1169.

\bibitem[LZ15]{LZ15}
Tian-Jun Li and Weiyi Zhang, \emph{Almost {K}\"ahler forms on rational 4-manifolds}, American Journal of Mathematics \textbf{137} (2015), no.~5, 1209--1256.

\bibitem[McD15]{McDuffErratum}
Dusa McDuff, \emph{Symplectic embeddings of 4-dimensional ellipsoids: erratum}, Journal of Topology \textbf{8} (2015), no.~4, 1119--1122.

\bibitem[MO07]{MO07}
Stefan Müller and Yong-Geun Oh, \emph{The group of {H}amiltonian homeomorphisms and {$C^0$}-symplectic topology}, Journal of Symplectic Geometry \textbf{5} (2007), no.~2, 167--219.

\bibitem[MO15]{MO15}
Dusa McDuff and Emmanuel Opshtein, \emph{Nongeneric {$J$}-holomorphic curves and singular inflation}, Algebraic \& Geometric Topology \textbf{15} (2015), no.~1, 231--286.

\bibitem[MS17]{MSIntro}
Dusa McDuff and Dietmar Salamon, \emph{Introduction to symplectic topology}, third ed., Oxford University Press, 2017.

\bibitem[Ono06]{Ono_FluxConjecture}
Kaoru Ono, \emph{Floer-{N}ovikov cohomology and the flux conjecture}, Geometric and Functional Analysis \textbf{16} (2006), no.~5, 981--1020.

\bibitem[Ops09]{Op09}
Emmanuel Opshtein, \emph{{$C^0$}-rigidity of characteristics in symplectic geometry}, Annales Scientifiques de l'École Normale Supérieure \textbf{42} (2009), no.~5, 857--864.

\bibitem[Pin08]{Pin08}
Martin Pinsonnault, \emph{Maximal compact tori in the {H}amiltonian group of 4-dimensional symplectic manifolds}, Journal of Modern Dynamics \textbf{2} (2008), no.~3, 431--455.

\bibitem[Sei08]{Sei08}
Paul Seidel, \emph{Lectures on four-dimensional {D}ehn twists. in}, Symplectic 4-Manifolds and Algebraic Surfaces, volume 1938 of { Lecture {N}otes in {M}athematics}, Springer, 2008, pp.~231--268.

\bibitem[Sey13]{Sey13}
Sobhan Seyfaddini, \emph{{$C^0$}-limits of {H}amiltonian paths and the {O}h--{S}chwarz spectral invariants}, International Mathematics Research Notices \textbf{2013} (2013), no.~21, 4920--4960.

\bibitem[Sey21]{SSmemoire}
\bysame, \emph{Géométrie symplectique {$C^0$}}, 2021, Diplôme d’habilitation à diriger des recherches en mathématiques de Sorbonne Université.

\bibitem[She22a]{Sh22-2}
Egor Shelukhin, \emph{Symplectic cohomology and a conjecture of {V}iterbo}, Geometric and Functional Analysis \textbf{32} (2022), no.~6, 1514--1543.

\bibitem[She22b]{Sh22-1}
\bysame, \emph{{V}iterbo conjecture for {Z}oll symmetric spaces}, Inventiones Mathematicae \textbf{230} (2022), no.~1, 321--373.

\bibitem[Wu14]{W14}
Weiwei Wu, \emph{Exact {L}agrangians in {$A_n$}-surface singularities}, Mathematische Annalen \textbf{359} (2014), no.~1, 153--168.

\bibitem[Zha17]{Zha17}
Weiyi Zhang, \emph{The curve cone of almost complex 4-manifolds}, Proceedings of the London Mathematical Society \textbf{115} (2017), no.~6, 1227--1275.

\end{thebibliography}

\end{document}